\newtheorem{theorem}{Theorem}[section]
\newtheorem{lemma}[theorem]{Lemma}
\newtheorem{claim}[theorem]{Claim}
\newtheorem{problem}[theorem]{Problem}
\newtheorem{definition}[theorem]{Definition}
\newtheorem{corollary}[theorem]{Corollary}
\newcommand{\pr}[1]{\mathbb{P}\left[ #1 \right]}
\newcommand{\e}{\epsilon}
\newcommand{\E}[1]{\mathbb{E}\left[ #1 \right]}
\renewcommand{\SS}{\mathbb{S}}
\newcommand{\RR}{\mathbb{R}}
\newcommand{\bin}[1]{\text{\rm Bin}\left( #1 \right)}
\DeclareMathOperator{\poly}{poly}
\begin{document}

\title{The critical window for the classical Ramsey-Tur\'an problem}

\author{Jacob Fox\thanks{Department of Mathematics, MIT, Cambridge, MA
02139-4307. Email: {\tt fox@math.mit.edu}. Research supported by a Simons
Fellowship, NSF grant DMS-1069197, and the MIT NEC Research Corporation
Fund.}
\and
Po-Shen Loh\thanks{ Department of Mathematical Sciences,
Carnegie Mellon University, Pittsburgh, PA 15213.  E-mail: {\tt
ploh@cmu.edu}. Research supported by NSF grant DMS-1201380, an NSA
Young Investigators Grant and a USA-Israel BSF Grant.}
\and
Yufei Zhao \thanks{Department of
Mathematics, MIT, Cambridge, MA 02139-4307. Email: {\tt
yufeiz@math.mit.edu}. Research supported by an Akamai Presidential
Fellowship.}}

\date{}

\maketitle

\begin{abstract}

The first application of  Szemer\'edi's powerful regularity method was the
  following celebrated Ramsey-Tur\'an result proved by Szemer\'edi in 1972:
  any $K_4$-free graph on $n$ vertices with independence number $o(n)$ has
  at most $(\frac18+o(1))n^2$ edges. Four years later, Bollob\'as and Erd\H{o}s gave a surprising geometric construction, utilizing the isoperimetric inequality for the high dimensional sphere, of a $K_4$-free
  graph on $n$ vertices with independence number $o(n)$ and
  $(\frac18-o(1))n^2$ edges.  Starting with Bollob\'as and Erd\H{o}s in 1976, several problems have been asked on estimating the minimum possible independence number in the critical window, when the number of edges is about $n^2/8$.  These problems have received considerable attention and remained one of the main open problems in this area.  In this paper, we give nearly best-possible bounds, solving the various open problems concerning this critical window.

\end{abstract}

\section{Introduction}

Szemer\'edi's regularity lemma \cite{Sz78} is one of the most powerful tools in extremal combinatorics. Roughly speaking, it says that every graph can be partitioned into a small number of parts such that the bipartite subgraph between almost every pair of parts is random-like. The small number of parts is at most   an integer $M(\epsilon)$ which depends only on an approximation parameter $\epsilon$. The exact statement of the regularity lemma is given in the beginning of Section \ref{sec:originalproof}. For more
background on the regularity lemma, the interested reader may consult the
well-written surveys by Koml\'os and Simonovits \cite{KS} and R\"odl and
Schacht \cite{RoSc}.

In the regularity lemma, $M(\epsilon)$ can be taken to be a tower of twos
of height $\epsilon^{-O(1)}$, and probabilistic constructions of Gowers
\cite{Go97} and Conlon and Fox \cite{CoFo} show that this is best possible.
Unfortunately, this implies that the bounds obtained by applications of the
regularity lemma are usually quite poor. It remains an important problem to
determine if new proofs giving better quantitative estimates for certain
applications of the regularity lemma exist (see, e.g., \cite{Go00}).  Some
progress has been made, including the celebrated proof of Gowers
\cite{Go01} of Szemer\'edi's theorem using Fourier analysis, the new proofs
\cite{Co,CFS,FS09,GRR} that bounded degree graphs have linear Ramsey
numbers, the new proof \cite{Fo} of the graph removal lemma, and the new
proofs \cite{CBK,LSS} of P\'osa's conjecture for graphs of large
order.

The earliest application of the regularity method\footnote{We remark that
Szemer\'edi \cite{Sz-orig} first developed regularity lemmas which were
weaker than what is now commonly known as Szemer\'edi's regularity lemma as
stated above. Original proofs of several influential results, including
Theorem \ref{thm:full-reg}, Szemer\'edi's theorem \cite{Sz75} on long
arithmetic progressions in dense subsets of the integers, and the
Ruzsa-Szemer\'edi theorem \cite{RuSz} on the $(6,3)$-problem, used
iterative applications of these weak regularity lemmas. Typically, the iterative
application of these original regularity lemma is of essentially the same
strength as Szemer\'edi's regularity lemma as stated above and gives
similar tower-type bounds. However, for Szemer\'edi's proof of the Ramsey-Tur\'an result, only two iterations were needed, leading to a double-exponential bound.}  is a celebrated result of Szemer\'edi from
1972 in Ramsey-Tur\'an theory; see Theorem \ref{szemramtur}. For a graph
$H$ and positive integers $n$ and $m$, the Ramsey-Tur\'an number ${\bf
RT}(n,H,m)$ is the maximum number of edges a graph $G$ on $n$ vertices with
independence number less than $m$ can have without containing $H$ as a
subgraph. The study of Ramsey-Tur\'an numbers was introduced by S\'os \cite{S69}. It was motivated by the classical theorems of Ramsey and Tur\'an and their connections to geometry, analysis, and number theory. Ramsey-Tur\'an theory has attracted a great deal of
attention over the last 40 years; see the nice survey by Simonovits and
S\'os \cite{SS01}.

\begin{theorem}[Szemer\'edi \cite{Sz-orig}] \label{szemramtur}
  For every $\epsilon > 0$, there is a $\delta
  > 0$ for which every $n$-vertex graph with at least $\big( \frac{1}{8} +
  \epsilon \big) n^2$ edges contains either a $K_4$ or an independent set
  larger than $\delta n$.
  \label{thm:full-reg}
\end{theorem}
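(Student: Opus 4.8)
The plan is to deduce this from Szemer\'edi's regularity lemma by extracting a \emph{triangle-free} reduced graph whose edges carry density at most $\tfrac12+o(1)$; Mantel's theorem then caps the edge count at $\tfrac{n^2}{8}+o(n^2)$, which is exactly the assertion (in contrapositive form). Fix $\epsilon$ and choose auxiliary constants in the hierarchy $\gamma\gg\eta\gg 1/k\gg\delta$: a density threshold $\gamma$ small in terms of $\epsilon$, a regularity parameter $\eta$ (say $\eta=\gamma^2$, which keeps $2\eta<\gamma$), the resulting number of parts $k=M(\eta)$, and finally a minuscule $\delta\le\gamma^3/(100k)$. Suppose $G$ is an $n$-vertex $K_4$-free graph with $\alpha(G)<\delta n$; I will show $e(G)<(\tfrac18+\epsilon)n^2$. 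Apply the regularity lemma to obtain an equitable partition $V_0\cup V_1\cup\dots\cup V_k$ with $|V_0|\le\eta n$, $|V_i|=m:=(1\pm\eta)\tfrac nk$ (so $m\gg\delta n$), and all but at most $\eta k^2$ of the pairs $(V_i,V_j)$ being $\eta$-regular, and let the reduced graph $R$ on $\{1,\dots,k\}$ have edge $ij$ exactly when $(V_i,V_j)$ is $\eta$-regular of density at least $\gamma$.

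The two structural inputs both rest on the remark that any subset of a $V_i$ of size more than $\delta n$ spans an edge, since $\alpha(G)<\delta n$. First, \emph{no $\eta$-regular pair has density exceeding $\tfrac12+3\eta$}: if $d(V_i,V_j)>\tfrac12+3\eta$, then all but at most $\eta|V_i|$ vertices of $V_i$ have at least $(\tfrac12+2\eta)|V_j|$ neighbours in $V_j$, so this large set contains an edge $xy$, whence $N(x)\cap N(y)\cap V_j$ has size at least $(2(\tfrac12+2\eta)-1)|V_j|=4\eta m>\delta n$ and contains an edge $zw$, making $\{x,y,z,w\}$ a $K_4$. Second, \emph{$R$ is triangle-free}: if $ij\ell$ were a triangle, the triangle counting lemma (applicable as $2\eta<\gamma$ is below each of the three densities) yields at least $\tfrac1{16}\gamma^3 m^3$ triangles of $G$ with one vertex in each of $V_i,V_j,V_\ell$; but for every edge $bc$ with $b\in V_j$, $c\in V_\ell$ the set $N(b)\cap N(c)\cap V_i$ must be independent in $G$ (an internal edge would close a $K_4$ with $b,c$), hence has size $<\delta n$, so there are at most $|V_j|\,|V_\ell|\cdot\delta n=m^2\delta n$ such triangles, and $\tfrac1{16}\gamma^3 m^3\le m^2\delta n$ contradicts $\delta\le\gamma^3/(100k)$ because $m/n\ge(1-\eta)/k$.

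It remains to sum the edges of $G$ by type. Edges meeting $V_0$: at most $\eta n^2$. Edges inside the parts: at most $k\binom m2\le n^2/(2k)$. Edges across non-regular pairs: at most $\eta k^2 m^2\le\eta n^2$. Edges across $\eta$-regular pairs of density $<\gamma$: at most $\gamma\binom k2 m^2\le\tfrac\gamma2 n^2$. Finally, the $\eta$-regular pairs of density $\ge\gamma$ — those indexed by $E(R)$ — each contribute at most $(\tfrac12+3\eta)m^2$ edges by the first input, and $|E(R)|\le k^2/4$ by the second input and Mantel's theorem, giving at most $\tfrac{k^2}{4}(\tfrac12+3\eta)m^2\le(\tfrac18+2\eta)n^2$. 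Adding all contributions bounds $e(G)$ by $\big(\tfrac18+O(\eta)+O(\gamma)+O(1/k)\big)n^2$, which is below $(\tfrac18+\epsilon)n^2$ once $\gamma,\eta$ and $1/k$ were taken small enough in terms of $\epsilon$, giving the claim.

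The main obstacle is the bookkeeping of the constant hierarchy: the counting-lemma lower bound in the triangle-freeness step must strictly beat the naive upper bound $m^2\delta n$, which forces $\delta$ to be polynomial in $\gamma$ and inversely proportional to $k=M(\eta)$, hence of tower type in $\epsilon$ — precisely the regularity-induced loss that the remainder of the paper sets out to eliminate. A secondary point of care is that the ``typical vertex'' reductions may be invoked only with respect to the one or two pairs actually participating in a given prospective $K_4$, never with respect to all $k$ pairs at once, since the vertices atypical for even a single pair may already form an $\eta$-fraction of a part and $\eta k\gg 1$.
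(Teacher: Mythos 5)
Your proposal is correct and takes essentially the same approach as the paper's proof in Section~\ref{sec:originalproof}: apply the regularity lemma, cap the density of each regular pair at $\tfrac12+o(1)$ via the codegree/independent-set argument (Lemma~\ref{lem:pair-density-1/2}), invoke Mantel's theorem on the reduced graph, and turn a reduced-graph triangle into a $K_4$ or a large independent set. The only cosmetic differences are that you argue in the contrapositive and use the triangle counting lemma plus a supersaturation count where the paper's Lemma~\ref{lem:regular-triangle} uses a direct greedy embedding.
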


Four years later, Bollob\'as and Erd\H{o}s \cite{BoEr-RT-construct} gave a surprising geometric construction, utilizing the isoperimetric inequality for the high dimensional sphere, of a $K_4$-free graph on $n$ vertices with independence number $o(n)$ and $(\frac18-o(1))n^2$ edges. Roughly speaking, the Bollob\'as-Erd\H{o}s graph consists of two disjoint copies of a discretized Borsuk graph, which connect nearly antipodal points on a high dimensional sphere, with a dense bipartite graph in between which connects points between the two spheres which are close to each other. For details of this construction and its proof, see Section \ref{sectBEgraph}.

Bollob\'as and Erd\H{o}s asked to estimate the minimum possible
independence number in the critical window, when the number of edges is
about $n^2/8$. This remained one of the main open problems in this area,
and, despite considerable attention, not much progress has been made on
this problem. In particular, Bollob\'as and Erd\H{o}s asked the following
question.

\begin{problem}[From \cite{BoEr-RT-construct}] \label{toplower}
Is it true that for each $\eta>0$ there is an $\epsilon>0$ such that for
each $n$ sufficiently large there is a $K_4$-free graph with $n$ vertices,
independence number at most $\eta n$,  and at least
$(\frac{1}{8}+\epsilon)n^2$ edges?
\end{problem}

They also asked the following related problem, which was later featured in the Erd\H{o}s paper \cite{Erd90} from 1990 entitled ``Some of my favourite unsolved problems''.

\begin{problem}[From \cite{BoEr-RT-construct}] \label{middlelower}
Is it true that for every $n$, there is a $K_4$-free graph with $n$ vertices, independence number $o(n)$, and at least $\frac{n^2}{8}$ edges?
\end{problem}

Erd\H{o}s, Hajnal, Simonovits, S\'os, and Szemer\'edi \cite{EHSSS} noted that perhaps replacing $o(n)$ by a slightly smaller function, say by $\frac{n}{\log n}$, one could get smaller upper bounds on Ramsey-Tur\'an numbers. Specifically, they posed the following problem.

\begin{problem}[From \cite{EHSSS}] \label{problEHSSS}
Is it true  for some constant $c>0$ that ${\bf RT}(n,K_4,\frac{n}{\log n}) < (1/8-c)n^2$?
\end{problem}

This problem was further discussed in the survey by Simonovits and S\'os \cite{SS01} and by Sudakov \cite{Su-RT}. Motivated by this problem, Sudakov \cite{Su-RT} proved that if $m = e^{-\omega\left((\log n)^{1/2}\right)}n$, then ${\bf RT}(n,K_4,m) = o(n^2)$.

In this paper, we solve the Bollob\'as-Erd\H{o}s problem to estimate the minimum independence number in the critical window. In particular, we solve the above problems, giving positive answers to Problems  \ref{toplower} and \ref{middlelower}, and a negative answer to Problem \ref{problEHSSS}. We next discuss these results in depth.

The bound on $\delta$ as a function of $\epsilon$ in the now standard
proof of Theorem \ref{thm:full-reg} (sketched in
Section~\ref{sec:originalproof}) strongly depends on the number of parts in Szemer\'edi's regularity lemma. In particular, it shows that $\delta^{-1}$ can be taken to be a tower of twos of height $\epsilon^{-O(1)}$. However, the original proof of Szemer\'edi \cite{Sz-orig}, which used two applications of a weak regularity lemma, gives a better bound, showing that $\delta^{-1}$ can be taken to be double-exponential in $\epsilon^{-O(1)}$.

In the survey on the regularity method \cite{KSSS}, it is surmised that the some regularity lemma is likely
unavoidable for applications where the extremal graph has densities in the
regular partition bounded away from $0$ and $1$.  In particular, they
thought this should be the case for  Theorem \ref{thm:full-reg}. Contrary
to this philosophy, our first result is a new proof of Theorem
\ref{thm:full-reg} which gives a much better bound and completely avoids
using the regularity lemma or any notion similar to regularity. More
precisely, it gives a linear bound for $\delta$ on $\epsilon$ in Theorem
\ref{thm:full-reg}, in stark contrast to the double-exponential dependence
given by the original proof.

\begin{theorem}  \label{thm:no-reg}
  For every $\alpha$ and $n$, every $n$-vertex graph with at least
  $\frac{n^2}{8} + 10^{10} \alpha n$ edges contains either a copy of $K_4$
  or an independent set of size greater than $\alpha$.
\end{theorem}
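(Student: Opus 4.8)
The plan is to argue by contradiction, assuming $G$ is $K_4$-free, has independence number at most $\alpha$, and yet has at least $n^2/8 + 10^{10}\alpha n$ edges; the goal is to extract from this a fixed absolute-constant improvement over the $n^2/8$ bound, which after rescaling forces $\alpha$ to be a constant fraction of $n$, contradicting $\alpha$ being small relative to the excess. The key structural input is that $K_4$-freeness means every neighborhood $N(v)$ induces a triangle-free graph, so by the trivial bound on triangle-free graphs $N(v)$ spans at most $|N(v)|^2/4$ edges and, crucially, $N(v)$ has an independent set of size at least $|N(v)|/2$ (a bipartition-type bound)—but since $\overline{G}$ has no independent set of size $>\alpha$, i.e. $G$ has no clique... wait, more usefully: an independent set inside $N(v)$ is an independent set of $G$, so it has size at most $\alpha$, which forces the triangle-free graph on $N(v)$ to have small independence number and hence (by, say, Ramsey-type or Shearer-type bounds for triangle-free graphs) to have many vertices only if $|N(v)|$ is not too large relative to $\alpha$... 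I need to be careful. Let me reorganize.

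First I would set up a weighting/counting argument on the quantity $\sum_v e(N(v))$, which by $K_4$-freeness counts something related to the number of paths and the triangle structure: in a $K_4$-free graph, $\sum_{v} e(N(v))$ equals $3 \cdot (\text{number of triangles})$ times... no—$\sum_v e(N(v))$ counts ordered pairs (vertex, edge in its neighborhood) $=$ number of triangles counted three times, but $K_4$-freeness bounds how these triangles overlap. The cleaner route, which I expect the authors take, is a \emph{local density} argument: for each edge $uv$, the common neighborhood $N(u)\cap N(v)$ is an independent set (else a $K_4$), so $|N(u)\cap N(v)| \le \alpha$. Summing $|N(u)\cap N(v)|$ over edges $uv$ counts paths/triangles and is bounded by $\alpha \cdot e(G)$. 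On the other hand, by convexity (Kruskal–Katona or just Cauchy–Schwarz on codegrees), $\sum_{uv \in E} |N(u) \cap N(v)| \ge \sum_v \binom{d(v)}{2} \cdot (\text{something})$, or more directly equals $\sum_w \binom{d(w)}{2}$ summed appropriately; comparing these two bounds with $e(G) = n^2/8 + c\alpha n$ and $\sum d(v) = 2e(G)$ via convexity should yield a contradiction once the constant $10^{10}$ absorbs the lower-order terms.

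The main obstacle I anticipate is that the naive codegree count $\sum_{uv\in E}|N(u)\cap N(v)| \le \alpha\, e(G)$ is \emph{not by itself strong enough}: plugging in $e(G) \approx n^2/8$ and the convexity lower bound $\sum_{uv\in E}|N(u)\cap N(v)| \gtrsim$ (number of triangles) gives a constraint that is satisfied by the Bollobás–Erdős graph, so one cannot win from codegrees alone. The real improvement must come from a more refined argument—likely partitioning $V(G)$ according to vertices of high vs.\ low degree, or examining the triangle-free graph induced on a single large neighborhood $N(v)$ and applying the (easy, Ramsey-type) bound that a triangle-free graph on $N(v)$ with independence number $\le \alpha$ has at most $O(\alpha |N(v)|)$ edges, then re-summing. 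The bound $e(\text{triangle-free }H) \le \alpha(H) \cdot |H| / $ const is the non-regularity substitute for "the neighborhood is bipartite-like"; feeding $\sum_v e(N(v)) \le \sum_v \alpha\, d(v) = 2\alpha\, e(G)$ back against a lower bound for $\sum_v e(N(v))$ coming from the global edge count (via counting triangles through each edge, using that $G$ itself has independence number $\le \alpha$ so it is "dense on every linear-sized set" — e.g. every set of $2\alpha$ vertices spans an edge) should close the gap. I would spend the bulk of the effort making the last implication quantitatively tight enough that the leading constant is exactly $\tfrac18$, tracking the linear-in-$\alpha$ error terms carefully so that a single absolute constant like $10^{10}$ (rather than anything depending on $\epsilon$) suffices.
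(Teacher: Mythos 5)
There is a genuine gap. The tools you start from are indeed the right basic ingredients (they appear as Lemma \ref{lem:edge-codegree} in the paper and its consequences: codegree at most $\alpha$ along every edge, hence every neighborhood induces a triangle-free graph of maximum degree at most $\alpha$, hence $\sum_v e(N(v)) = 3t(G) \leq \alpha\, e(G)$). But the entire burden of the theorem lies in the step you defer to the end — producing a competing \emph{lower} bound that detects the excess $10^{10}\alpha n$ above $\frac{n^2}{8}$ — and the mechanism you propose for it does not work. Triangle supersaturation gives nothing below $\frac{n^2}{4}$ edges, and the independence-number condition does not rescue the count: the Bollob\'as--Erd\H{o}s graph has $(\frac18-o(1))n^2$ edges, independence number $o(n)$, and only $o(n^3)$ triangles (every triangle uses an edge inside one of the two sparse halves, and that edge has codegree at most $\alpha$), so all of your inequalities are simultaneously satisfiable at density $\frac18$. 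The quantities you compare each carry slack of order $\alpha n^2$, so an excess of $C\alpha n$ edges is invisible to them; no choice of absolute constant closes this by bookkeeping alone. Your fallback observation that a set of $k$ vertices spans at least roughly $k^2/(2\alpha)$ edges, played against $e(N(v)) \leq \alpha d(v)/2$, only yields $\alpha \gtrsim \sqrt{n}$, far from a contradiction in the relevant regime.

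What is actually needed — and what the paper does — is a structural/stability argument rather than a global count. After forcing minimum degree $\frac{n}{4}+\Omega(\alpha)$ by induction on $n$, the paper shows via the ``trisection'' dichotomy (Lemmas \ref{lem:trisect}--\ref{lem:big-sparse}) that some vertex $v$ has many vertices whose density to $N_v$ noticeably exceeds $\frac13$, and bootstraps this into a set $X$ of $(\frac12-c)n$ vertices inducing a subgraph of maximum degree at most $\alpha$; this yields a cut with only $O(n^2/10^4)$ non-crossing edges (Lemma \ref{lem:half-sparse=big-cut}); and only then, in the max-cut analysis of Lemma \ref{lem:great-cut=done}, is the $\frac{3\alpha n}{2}$ edge excess finally spent, against the bound $e(L,R) \leq \frac{|L||R|}{2} + O(\alpha n)$ coming from the codegree lemma. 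Your proposal contains the first lemma of this chain but none of the decomposition, the trisection argument, or the cut-stability endgame, and the counting route you sketch in their place cannot substitute for them.
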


It is natural to wonder whether one must incur a constant factor of
$10^{10}$.  Our second result sharpens the linear dependence down to a very
reasonable constant. Its proof uses the regularity lemma with an absolute
constant regularity parameter (independent of $n$ and $\alpha$).

\begin{theorem}   \label{thm:some-reg}
  There is an absolute positive constant $\gamma_0$ such that for every $\alpha <
  \gamma_0 n$, every $n$-vertex graph with at least $\frac{n^2}{8} +
  \frac{3}{2} \alpha n$ edges contains a copy of $K_4$ or an
  independent set of size greater than $\alpha$.
\end{theorem}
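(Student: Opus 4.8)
The plan is to argue by contradiction, so suppose $G$ is an $n$-vertex $K_4$-free graph with $\alpha(G)\le\alpha$, $\alpha<\gamma_0 n$, and $e(G)\ge \frac{n^2}{8}+\frac32\alpha n$. Everything hinges on one elementary consequence of $K_4$-freeness: for every edge $uw$ the common neighbourhood $N(u)\cap N(w)$ is an independent set, hence has at most $\alpha$ vertices, so $\deg(u)+\deg(w)\le n+\alpha$ for every edge $uw$; likewise $N(v)$ induces a triangle-free graph of maximum degree at most $\alpha$ for every vertex $v$. By themselves these facts only give $e(G)\le \frac{n^2}{4}+\frac{\alpha n}{4}$ (Cauchy--Schwarz on the degree sequence), so the real content is removing the spurious factor of two.

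First I would apply Szemer\'edi's regularity lemma with an \emph{absolute constant} parameter $\epsilon$, producing an $\epsilon$-regular partition $V(G)=V_0\cup V_1\cup\cdots\cup V_k$ with $|V_0|\le\epsilon n$, $|V_1|=\cdots=|V_k|=m$, and $\tfrac1\epsilon\le k\le M(\epsilon)$. Since $\epsilon$ and hence $k$ are absolute constants, all losses incurred ($\epsilon n^2$ from irregular pairs, $n^2/k$ from inside parts, and the slack $\alpha k/n$ below) are controlled by absolute constants, which is the only reason regularity is invoked. Two structural observations come out of the set-up. (i) For any $\epsilon$-regular pair $(V_i,V_j)$, choosing an edge $xy$ inside $V_i$ both of whose endpoints are typical toward $V_j$ --- possible because $m>\alpha$ and $\alpha(G)\le\alpha$ --- the set $N(x)\cap N(y)\cap V_j$ has at least $(2d(V_i,V_j)-1-2\epsilon)m$ vertices and must be independent, so $d(V_i,V_j)\le\tfrac12+\tfrac{\alpha}{2m}+\epsilon$: every regular pair has density at most $\tfrac12+o(1)$. (ii) The reduced graph $R$ on $\{1,\dots,k\}$ with $i\sim j$ whenever $(V_i,V_j)$ is $\epsilon$-regular of density at least $\tfrac12+2\epsilon$ is triangle-free, since a triangle $V_iV_jV_k$ together with a typical edge $xy$ in $V_i$ gives independent sets $N(x)\cap N(y)\cap V_j$ and $N(x)\cap N(y)\cap V_k$, each of size $\ge\epsilon m$, and a single edge between them (guaranteed by regularity of $(V_j,V_k)$) completes a $K_4$. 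Hence $e(R)\le k^2/4$ by Tur\'an's theorem.

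The crux is upgrading $\frac{n^2}{4}$ to $\frac{n^2}{8}$, i.e.\ showing that the roughly $k^2/4$ pairs of density close to $\tfrac12$ cannot all occur. The mechanism I would use is self-improvement: for $S\subseteq\{1,\dots,k\}$ the graph $G[\bigcup_{i\in S}V_i]$ is again $K_4$-free with independence number at most $\alpha$ and fewer vertices, so the bound being proved applies to it (this is where an induction on the number of vertices, or an auxiliary extremal function, is invoked), giving $\sum_{\{i,j\}\subseteq S}d(V_i,V_j)\cdot m^2=e\big(G[\bigcup_{i\in S}V_i]\big)-\sum_{i\in S}e(G[V_i])\le\frac{(|S|m)^2}{8}+\frac32\alpha|S|m$. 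Combining these inequalities for many $S$ --- together with the triangle-freeness of $R$ and Tur\'an stability, so that $R$ is essentially a balanced complete bipartite graph and the density pattern is pinned near the Bollob\'as--Erd\H{o}s configuration --- should force $\sum_{\{i,j\}}d(V_i,V_j)\le\frac{k^2}{8}+o(k^2)$, whence $e(G)\le m^2\sum_{\{i,j\}}d(V_i,V_j)+\sum_i e(G[V_i])+O(\epsilon n^2)\le \frac{n^2}{8}+o(n^2)$.

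The hard part, and what I expect to occupy almost all of the work, is replacing this $o(n^2)$ by the sharp error term $\frac32\alpha n$, which in particular cannot be achieved by crude averaging (that loses a term of order $n^2/k$, far larger than $\alpha n$ when $\alpha=o(n)$). In the extremal example the extra $\frac32\alpha n$ edges come from roughly $3\alpha$ vertices of degree about $\tfrac n2$ appended to a Bollob\'as--Erd\H{o}s graph, so I would first isolate the set $L$ of vertices of degree at least, say, $\tfrac n2-\Omega(n)$; since $\deg(u)+\deg(w)\le n+\alpha$, any two vertices of degree exceeding $\tfrac{n+\alpha}{2}$ are non-adjacent, which already shows that only $O(\alpha)$ vertices are very close to degree $\tfrac n2$ and, using the triangle-free-neighbourhood structure, lets one bound the edges incident to $L$ by $\frac32\alpha n+o(\alpha n)$ directly. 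Running the regularity argument on $G-L$, where now every degree is bounded away from $\tfrac n2$ and correspondingly every regular pair has density bounded away from $\tfrac12$, is precisely what is needed to bring the clean count below $\frac{n^2}{8}+\frac32\alpha n$ and reach the desired contradiction; executing this split with all constants tight is the main obstacle.
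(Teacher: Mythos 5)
Your first phase---regularity with an absolute constant parameter, the codegree/density bound $d(V_i,V_j)\le\frac12+O(\epsilon)+\frac{\alpha}{2m}$, and a triangle-free reduced graph---does match the opening of the paper's argument (its Lemmas on edge codegree, pair density, and regular triangles), though note that the reduced graph you actually need to be triangle-free is the one with a \emph{low} density threshold (density at least $3\epsilon$, say): with your threshold $\frac12+2\epsilon$ the reduced graph is empty by your own observation (i), so Tur\'an's theorem applied to it gives nothing. The paper takes the low-threshold reduced graph, shows it is triangle-free with more than $(\frac14-7\epsilon)t^2$ edges, and applies Erd\H{o}s--Simonovits stability to extract a cut of $G$ with at most $cn^2$ non-crossing edges for an arbitrarily small absolute constant $c$.

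The genuine gap is everything after that: you correctly identify that the sharp error term $\frac32\alpha n$ is the whole difficulty, but neither of your two proposed mechanisms is carried out, and neither is the one the paper uses. The ``self-improvement'' induction over unions of parts loses an additive $\Theta(n^2/k^2)$ each time it is averaged (as you yourself note), which swamps $\alpha n$ when $\alpha=o(n)$; and the plan to excise a set $L$ of high-degree vertices and bound its incident edges by $\frac32\alpha n+o(\alpha n)$ is asserted, not proved---the fact that vertices of degree exceeding $\frac{n+\alpha}{2}$ form an independent set only controls $O(\alpha)$ vertices extremely close to degree $\frac n2$, not the much larger set with degree $\frac n2-\Omega(n)$. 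What the paper actually does at this stage is: first pass to an induced subgraph of minimum degree at least $\frac{n'}{8}$ (an edge-density/vertex-deletion argument), then take the \emph{max-cut} $L\cup R$; local optimality of the max-cut converts the minimum degree into a minimum cross-degree for every vertex. An ``$L$-triangle'' lemma (a triangle whose $R$-degrees sum to more than $|R|+3\alpha$ forces a $K_4$ or a big independent set) then shows that, outside small exceptional sets $S_L,S_R$ of vertices with cross-degree below $(\frac12-\frac{c}{8})$ of the other side, each side induces a graph of maximum degree at most $\alpha$. Finally a two-sided count of $e(L,R)$ shows each vertex of $S_L\cup S_R$ costs $\Omega(cn)$ crossing edges, so the at most $\frac{|S_L|^2+|S_R|^2}{2}$ internal edges they could contribute are absorbed, yielding $e(G)<\frac{|L||R|}{2}+\frac{3\alpha n}{2}\le\frac{n^2}{8}+\frac{3\alpha n}{2}$, the desired contradiction with the exact constant $\frac32$. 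This max-cut local-optimality step and the triangle degree-sum lemma are the missing ideas; without them (or a worked-out substitute) your outline does not close.
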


While Theorem \ref{thm:no-reg} has a weaker bound and a longer proof than Theorem \ref{thm:some-reg}, its inclusiion is justified  by the fact that its proof completely avoids using any regularity-like lemma, and the ideas may be of use to get rid of the regularity lemma in other applications. Further, it applies to all $\alpha$, while Theorem \ref{thm:some-reg} only applies to $\alpha<\gamma_0 n$.

We also prove the following corresponding lower bound, which shows that the
linear dependence in Theorem \ref{thm:some-reg} is best possible, matching
the dependence on $\alpha$ to within a factor of $3+o(1)$. Starting with
the Bollob\'as-Erd\H{o}s graph, the construction finds a slightly denser
$K_4$-free graph without increasing the independence number much. It also
gives a positive answer to Problem \ref{toplower} of Bollob\'as and
Erd\H{o}s with the linear dependence that our previous theorems now reveal
to be correct.  Here, we write $f(n) \ll g(n)$ when $f(n)/g(n) \rightarrow
0$ as $n \rightarrow \infty$.

\begin{theorem} \label{theorybeyond}
For
$\frac{(\log \log n)^{3/2}}{(\log n)^{1/2}} \cdot n
\ll
m
\leq
\frac{n}{3}$,
we have
\[
{\bf RT}(n,K_4,m) \geq
\frac{n^2}{8}+\left(\frac{1}{3}-o(1)\right)m n.
\]
\end{theorem}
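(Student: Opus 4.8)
\section*{Proof proposal for Theorem~\ref{theorybeyond}}

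The plan is to start from the Bollob\'as--Erd\H{o}s graph of Section~\ref{sectBEgraph} and modify it so as to squeeze out $\Theta(mn)$ extra edges. Recall that, for a suitable choice of the ambient dimension $d$ and of the proximity/antipodality parameters, that graph $\mathrm{BE}$ is a $K_4$-free graph on $n$ vertices, split into two sphere-parts $V_1\cup V_2$, with $(\tfrac18-o(1))n^2$ edges and independence number $o(n)$; crucially, the only edges inside $V_1$ and inside $V_2$ are the sparse ``Borsuk'' edges, so these two parts are almost empty and that is where the new edges will go. The first step is to make quantitative how small we may force the base independence number to be: the spherical isoperimetric inequality pushes it below $e^{-\Omega(d)}n$ once the antipodality parameter is a suitable (small) $o(1)$, while placing $n$ points finely enough on $S^{d-1}$ to run the construction forces $d=O(\log n/\log(1/\varepsilon))$; reconciling these two requirements with the needs of the densification step below is exactly what produces the hypothesis $m\gg \frac{(\log\log n)^{3/2}}{(\log n)^{1/2}}n$. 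I would fix the parameters so that the base independence number is $o(m)$, leaving slack for the modification.

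The second step is the densification: inside a suitable sub-region $A_i\subseteq V_i$ of each sphere-part, replace the (sparse) Borsuk graph by a dense \emph{triangle-free} graph $G_i$ whose independence number is a small constant fraction of $m$. The efficiency comes from the elementary fact that in a triangle-free graph the neighbourhood of any vertex is independent, so the maximum degree is at most the independence number and $G_i$ can have as many as $\sim\tfrac12|A_i|\,\alpha(G_i)$ edges; with $|A_i|$ a constant fraction of $n$ and $\alpha(G_i)$ a constant fraction of $m$ this contributes $\Theta(mn)$ new edges. To have triangle-free graphs that are simultaneously this dense and have near-optimal independence number, I would take $G_i$ to be a blow-up of a near-extremal Ramsey (e.g.\ Kim-type) graph on $\approx n^2(\log n)/m^2$ ``super-vertices'', and embed those super-vertices as a well-separated family of tiny clusters inside $A_i$. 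The regions $A_1,A_2$ and the proximity parameter must then be arranged so that every proximity-neighbourhood that could take part in a $K_4$ either misses the other structured region or is forced to be antipodal-like (hence independent in the other $G_i$).

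The third step is verification and optimization. For $K_4$-freeness one splits a hypothetical $K_4$ by how its four vertices lie among $V_1,V_2$: the $(4,0),(3,1),(1,3),(0,4)$ cases reduce to $G[V_1],G[V_2]$ being triangle-free, which holds because the blown-up clusters are small and well-separated, so no $G_i$-edge can complete a triangle with two Borsuk edges; the $(2,2)$ case reduces to the assertion that for each $G_i$-edge the common proximity-neighbourhood of its endpoints on the other side is independent there, which is what the placement of $A_1,A_2$ buys. For the independence number, an independent set is $I_1\cup I_2$ with each $I_i$ internally independent and no cross edges between them; the Borsuk parts contribute only $o(m)$ by isoperimetry, the structured parts contribute at most $\alpha(G_1)+\alpha(G_2)$, but the no-cross-edge condition together with the geometric ``spread'' of the $G_i$ prevents both structured contributions from being large and disjoint at once. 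Balancing this against the $K_4$ constraint is precisely what caps the total below $m$ while forcing the degrees of the $G_i$ (hence the edge gain) down to $(\tfrac13-o(1))mn$ rather than the naive $\tfrac12 mn$; one then counts edges and optimizes the sizes of $A_1,A_2$ and of the $G_i$ to reach the stated bound, which matches Theorem~\ref{thm:some-reg} up to the factor $3$.

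The hard part, and the place I expect most of the work to live, is exactly this last balancing act: denser $G_i$ helps the edge count but simultaneously creates more potential $K_4$'s with the cross edges \emph{and} enlarges independent sets, and the sphere geometry couples these two effects through the proximity relation, so the constant that survives is genuinely smaller than $\tfrac12$. Designing the internal triangle-free graphs with near-optimal independence number, high density, \emph{and} the pseudorandom spread on $S^{d-1}$ needed to make the case analysis above go through, together with the delicate choice of the structured regions, is the technical heart; the quantitative hypothesis on $m$ is the price this construction pays, namely that $d$ must be taken large enough for all the isoperimetric estimates while $n$ points still fit on $S^{d-1}$.
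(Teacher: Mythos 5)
Your route is genuinely different from the paper's (you densify \emph{inside} the two sphere-parts; the paper densifies \emph{across} them), and it has a gap at exactly the point you flag as "what the placement of $A_1,A_2$ buys" --- I do not believe that placement can buy it. The issue is the $(2,2)$ case of $K_4$-freeness. In the Bollob\'as--Erd\H{o}s graph the identity $0\le|x+x'-y-y'|^2$ rules out a $K_4$ on $x,x'\in X$, $y,y'\in Y$ only because \emph{both} within-part edges join nearly antipodal points ($|x-x'|,|y-y'|>2-\mu$); for $|x-x'|$ bounded away from $2$ the inequality gives nothing. Now take any edge $uv$ of your $G_1$, whether $u,v$ lie in one tiny cluster or in two well-separated clusters at distance bounded away from $2$. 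Their common proximity-neighbourhood $S\subseteq V_2$ is the intersection of two caps of measure close to $1/2$ and has size $\Theta(n)$, which vastly exceeds the independence number $2ne^{-\e\sqrt h/4}$ of the part $V_2$ guaranteed by the isoperimetric argument of Claim~\ref{cl:BE-ind}. That same argument then forces $S$ to contain a Borsuk edge $yy'$ of $V_2$ (any subset of a part larger than the independence bound has diameter exceeding $2-\mu$ and hence spans an antipodal edge), and $u,v,y,y'$ is a $K_4$. The offending edges are the Borsuk edges of $V_2$, which are spread over the whole sphere: no choice of the regions $A_1,A_2$ avoids them, and deleting them destroys the independence-number bound that the construction exists to provide. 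The only within-part edges one can add without creating a $K_4$ are between vertices whose common cross-neighbourhood is already below the independence number, i.e.\ essentially between nearly antipodal vertices --- which is what the Borsuk graph already has.

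The paper's construction (Lemma~\ref{hybrid}) avoids this entirely by adding only \emph{cross} edges. One picks $U_i\subset V_i$ with $|U_i|=d$, deletes every within-part edge meeting $U_i$, and joins $U_i$ completely to $V_{3-i}$. The new graph is still ``nice'': a $K_4$ through a vertex $u\in U_i$ would put its other three vertices in the triangle-free part $V_{3-i}$, and the untouched within-part edges still join nearly antipodal points. The independence number increases by at most $d$ for the trivial reason that $U_1$ is complete to $U_2$, so an independent set meets only one of them. The arithmetic gives a net gain of $\frac{d(n-d)}{2}-O(d^2/n\cdot n+n)$ over $\frac{n^2}{8}$, and with the final independence parameter $m\approx d$ this is $\frac{mn}{2}\left(1-\frac{m}{n}\right)-o(mn)\ge\left(\frac13-o(1)\right)mn$ for $m\le n/3$, and $\left(\frac12-o(1)\right)mn$ for $m\ll n$ --- the constant $\frac13$ comes from this elementary optimization, not from any Ramsey-graph balancing. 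Your explanation of where the hypothesis $m\gg\frac{(\log\log n)^{3/2}}{(\log n)^{1/2}}n$ comes from (making the base independence number and the base edge deficit both $o(m)$ and $o(mn)$ respectively, subject to fitting $n$ well-separated cells on $\SS^{h-1}$) does match the paper's Corollary~\ref{corbeconstruct2}, but the densification step needs to be replaced wholesale.
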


\noindent \textbf{Remarks.} The tripartite Tur\'an graph has
independent sets of size $(1+o(1)) \frac{n}{3}$, so once $m$ exceeds
$\frac{n}{3}$, the Ramsey-Tur\'an problem for $K_4$ asymptotically coincides
with the ordinary Tur\'an problem.  Also, in the sublinear regime, our
proof actually produces the slightly stronger asymptotic lower bound ${\bf
RT}(n,K_4,m) \geq \frac{n^2}{8}+\left(\frac{1}{2}-o(1)\right)m n$ when
$\frac{(\log \log n)^{3/2}}{(\log n)^{1/2}} \cdot n \ll m \ll n$.

\bigskip

Bollob\'as and Erd\H{o}s drew attention to the interesting transition point of exactly $\frac{n^2}{8}$ edges. Thus far, the best result for this regime was a lower bound on the independence number of $n
e^{-O(\sqrt{\log n})}$ by Sudakov \cite{Su-RT}. The proof relies on a powerful probabilistic technique known as dependent random choice; see the survey by Fox and Sudakov \cite{FS-DRC}.

By introducing a new twist on the dependent random choice technique, we
substantially improve this lower bound on the independence number at the
critical point.  We think that this new variation may be interesting in
its own right, and perhaps could have other applications elsewhere, as the
main dependent random choice approach has now seen widespread use.  Our key
innovation is to exploit a very dense setting, and to select not the common
neighborhood of a random set, but the set of all vertices that have many
neighbors in a random set; then, we apply a ``dispersion'' bound on the
binomial distribution in addition to the standard Chernoff
``concentration'' bound.

\begin{theorem} \label{thm:1/8}
  There is an absolute positive constant $c$ such that every $n$-vertex graph with at least $\frac{n^2}{8}$ edges contains a copy of $K_4$ or an independent set of size greater than $c n
  \cdot \frac{\log \log n}{\log n}$.
\end{theorem}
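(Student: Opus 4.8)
The plan is to run a variant of the dependent random choice method in which, rather than passing to the common neighbourhood of a random vertex set, one passes to the set of all vertices that have \emph{many} neighbours in a random set. Suppose for contradiction that $G$ is a $K_4$-free graph on $n$ vertices with $e(G)\ge n^2/8$ and no independent set of size larger than $\alpha_0:=cn\log\log n/\log n$, for a small absolute constant $c$. The single structural fact I would use throughout is that, since $G$ is $K_4$-free, the common neighbourhood $N(u)\cap N(v)$ of \emph{every} edge $uv$ is an independent set, so it has size at most $\alpha_0$.

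I would first reduce to a ``dense setting'': by iteratively deleting vertices of degree below (roughly) a quarter of the current order one may assume that $G$ has $(1-o(1))n$ vertices and minimum degree of order $n/4$, so that every $d_v/n$ is bounded away from $0$; the small independence number also keeps every $d_v/n$ bounded away from $1$. Now fix a parameter $t$, let $T$ be a uniformly random $t$-element subset of $V(G)$, and for a suitable threshold $\theta$ put $A=\{\,v:|N(v)\cap T|\ge \theta t\,\}$. The concentration (Chernoff) half of the argument is that $|N(v)\cap T|\sim\bin{t,d_v/n}$, so because every $d_v/n$ is bounded away from the extremes, $\pr{v\in A}$ is bounded below for each $v$; hence $\E{|A|}$ is large, and some outcome of $T$ attains at least this value.

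The dispersion half controls the pairs inside $A$. Call a pair $\{u,v\}$ \emph{bad} if $|N(u)\cap N(v)|\le\alpha_0$; note every edge of $G$ is bad. If $u,v\in A$ then, from $|N(u)\cap T|+|N(v)\cap T|=|(N(u)\cup N(v))\cap T|+|N(u)\cap N(v)\cap T|\le t+|N(u)\cap N(v)\cap T|$, the set $N(u)\cap N(v)$ must capture a prescribed number of points of $T$; for a bad pair this count is stochastically dominated by $\bin{t,\alpha_0/n}$, and an upper‑tail (dispersion) estimate for the binomial bounds $\pr{u,v\in A}$ by about $\bigl(C\alpha_0/n\bigr)^{\Omega(t)}=\bigl(C'\log\log n/\log n\bigr)^{\Omega(t)}$. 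One then tries to choose $\theta$ and $t\asymp \log n/\log\log n$ so that the expected number of bad pairs inside $A$ is below $\tfrac12\E{|A|}$; deleting one vertex of each bad pair in a near‑optimal outcome of $T$ yields $A'\subseteq A$ with $|A'|\ge\tfrac12\E{|A|}$ in which every pair has more than $\alpha_0$ common neighbours. If $A'$ contained an edge it would exhibit an independent set of size $>\alpha_0$, impossible, so $A'$ is itself independent and $\alpha_0\ge|A'|\ge\tfrac12\E{|A|}$; choosing the constants so that $\tfrac12\E{|A|}>\alpha_0$ gives the contradiction, which proves the theorem.

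The step I expect to be the main obstacle is striking exactly this balance: lowering $\theta$ enlarges $A$ but weakens the dispersion bound on bad pairs, whereas raising $\theta$ (or $t$) kills bad pairs but shrinks $\E{|A|}$. The entire improvement over Sudakov's $ne^{-\Theta(\sqrt{\log n})}$ must come from using the binomial \emph{dispersion} inequality — not merely the Chernoff bound — so that the ``cost'' of a bad pair is $(\log\log n/\log n)^{\Omega(t)}$ rather than something that improves only polynomially in $t$; this is what should pin the optimum at $t\asymp\log n/\log\log n$ and produce $\Omega(n\log\log n/\log n)$. A secondary technical point is making the dense‑setting reduction interact cleanly with the degree‑dependent probabilities $\pr{\bin{t,d_v/n}\ge\theta t}$, in particular handling the few vertices whose degree is far from $n/4$.
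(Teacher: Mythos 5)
Your overall plan --- threshold the $T$-degree at roughly half, use a binomial dispersion estimate for membership in $A$ and a Chernoff upper bound for bad pairs --- is indeed the skeleton of the paper's second proof of this theorem. But there is a genuine gap, and it sits exactly where you flag ``the main obstacle'': no choice of $\theta$ and $t$ makes your argument close, because you have skipped the structural step that makes the balance possible. Your bad-pair analysis needs $|N(u)\cap N(v)\cap T|\ge(2\theta-1)t$ to be a nontrivial demand, so it needs $\theta>\frac12$; and to get a bound of the form $(C\alpha_0/n)^{\Omega(t)}$ you in fact need $2\theta-1=\Omega(1)$, i.e.\ $\theta\ge\frac12+\Omega(1)$. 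On the other hand your reduction only guarantees minimum degree about $n/4$, so for a vertex $v$ with $d_v/n\approx\frac14$ the quantity $\pr{v\in A}=\pr{\bin{t,d_v/n}\ge\theta t}$ is $e^{-\Omega(t)}$, not bounded below; with $t\asymp\log n/\log\log n$ this gives $\E{|A|}\le n e^{-\Omega(\log n/\log\log n)}$, which is far \emph{smaller} than $\alpha_0=\Theta(n\log\log n/\log n)$, so no contradiction results. And you cannot escape by assuming all degrees exceed $(\frac12+\Omega(1))n$: then any edge already has common neighbourhood of linear size and the conclusion is immediate, so such graphs are not the hard case --- they do not exist here at all.

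The paper resolves this with a structural preprocessing step that your sketch is missing. Using the minimum-degree reduction, a constant-parameter application of the regularity lemma, and the Erd\H{o}s--Simonovits stability theorem, one first finds (Corollary \ref{cor:lots-half-degrees}) two disjoint linear-sized sets $X$, $Y$ such that \emph{every} vertex of $X$ has $Y$-degree at least $\left(\frac12-O(\gamma)\right)|Y|$ with $\gamma=\Theta(\log\log n/\log n)$ --- that is, all relevant degrees are pinned to within $O(\gamma)$ of exactly half of $|Y|$, not merely bounded away from $0$ and $1$. Only then is $T$ sampled (from $Y$), with the threshold set at $\left(\frac12+\epsilon\right)t$ for $\epsilon=\Theta(\gamma)$, so that $2\theta-1=2\epsilon$ is positive but tiny. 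The dispersion lemma shows $\pr{v\in U_0}\ge\Omega\left(\epsilon\sqrt t\,e^{-K\epsilon^2t}\right)$, which is only polylogarithmically small, while a bad pair (common neighbourhood at most $\epsilon|Y|$) must capture $2\epsilon t$ points of $T$, a factor-two upper-tail deviation of probability $e^{-\Omega(\epsilon t)}$; making this beat $n^{-2}$ forces $t=\Theta(\log^2 n/\log\log n)$, not $\log n/\log\log n$. Without the ``degrees within $O(\gamma)$ of half'' preprocessing, the tension you describe is not a technical balancing act but an actual impossibility.
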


We also prove an upper bound on this problem, giving a positive answer to
Problem \ref{middlelower} of Bollob\'as and Erd\H{o}s. The proof is again
by modifying the Bollob\'as-Erd\H{o}s graph to get a slightly denser
$K_4$-free graph whose independence number does not increase much.

\begin{theorem} \label{easythe}
There is an absolute positive constant $c'$ such that for each positive integer $n$, there is an $n$-vertex $K_4$-free graph with at least $\frac{n^2}{8}$ edges and independence number at most $c' n \cdot \frac{(\log \log n)^{3/2}}{(\log n)^{1/2}}$.
\end{theorem}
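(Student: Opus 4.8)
The plan is to start from the Bollob\'as--Erd\H{o}s graph $G_0$ of Section~\ref{sectBEgraph} --- which is $K_4$-free with independence number $o(n)$ but falls just short of $\frac{n^2}{8}$ edges --- and to adjoin a small independent set of new vertices joined completely to one of its two ``spheres.'' This will push the edge count past $\frac{n^2}{8}$, raise the independence number only by the size of the added set, and, crucially, create no $K_4$.

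First I would fix the parameters of $G_0$: the dimension $d$ of the sphere $S^{d-1}$, the threshold $\mu$ measuring how nearly antipodal two points on a common sphere must be to be adjacent, and the threshold $\nu$ governing the dense bipartite part. Two competing effects constrain the choice. The spherical isoperimetric inequality forces every subset of a single sphere with no near-antipodal pair to have measure $e^{-\Omega(\mu d)}$, so, provided the point sets are fine and well-distributed (which by a covering/VC argument needs roughly $n\gtrsim\mu^{-\Theta(d)}$ points), the independence number of $G_0$ is at most $n\,e^{-\Omega(\mu d)}$; this wants $\mu d$ large. On the other hand the bipartite part has density $\frac12-\Theta(\nu\sqrt d)$ rather than $\frac12$, so $e(G_0)=\frac{n^2}{8}-\Theta(\nu\sqrt d)\,n^2$, a deficit that must be recovered; this wants $\nu\sqrt d$ small. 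Since $K_4$-freeness of $G_0$ forces a relation of the form $\mu=O(\nu)$, balancing $\mu d\asymp\log\log n$ against the point-count constraint leads to $d\asymp\frac{\log n}{\log\log n}$ (as large as possible) and $\mu\asymp\nu\asymp\frac{(\log\log n)^2}{\log n}$, so that $G_0$ has independence number $O(n/\sqrt{\log n})$ and edge deficit $\Theta\!\big(\frac{(\log\log n)^{3/2}}{(\log n)^{1/2}}\big)\,n^2$.

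Now the modification. Build $G_0$ on $(1-\delta)n$ vertices with $\delta=\Theta\!\big(\frac{(\log\log n)^{3/2}}{(\log n)^{1/2}}\big)$ a large enough multiple of the deficit fraction, and call its two spheres $V_1$ and $V_2$. Adjoin a set $U$ of $\delta n$ new vertices, independent, joined to every vertex of $V_1$ and to no vertex of $V_2$. Shrinking $G_0$ costs about $\frac{\delta}{4}n^2$ edges while the complete bipartite graph between $U$ and $V_1$ adds about $\frac{\delta}{2}n^2$, a net gain of roughly $\frac{\delta}{4}n^2$, which by the choice of $\delta$ outweighs the deficit; hence the resulting graph $G'$ has at least $\frac{n^2}{8}$ edges. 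It remains $K_4$-free: since $U$ is independent, any $K_4$ meeting $U$ contains exactly one vertex of $U$, whose three neighbours in the $K_4$ lie in $N(U)=V_1$ and form a triangle in the near-antipodal graph on $V_1$ --- impossible, since that graph is triangle-free. And $\alpha(G')$ stays small: an independent set of $G'$ either avoids $U$, hence lies in $V_1\cup V_2$ and has size $\le\alpha(G_0)=O(n/\sqrt{\log n})$, or avoids $V_1$, hence lies in $U\cup V_2$ and has size $\le|U|+\alpha(G_0[V_2])=\delta n+O(n/\sqrt{\log n})$. Either way $\alpha(G')=O\!\big(\frac{(\log\log n)^{3/2}}{(\log n)^{1/2}}\,n\big)$, as required.

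The main obstacle is the parameter bookkeeping of the second paragraph: one must pin down the precise dependence of the Bollob\'as--Erd\H{o}s graph's independence number and edge count on $(d,\mu,\nu)$, the exact lower bound on $n$ forcing a sufficiently fine net on $S^{d-1}$, and the exact $K_4$-free relation between $\mu$ and $\nu$, and then check that all these constraints are jointly met by some choice of parameters --- so that the $\Theta\!\big(\frac{(\log\log n)^{3/2}}{(\log n)^{1/2}}\big)n^2$ edge deficit can indeed be recovered without pushing the independence number above $O\!\big(\frac{(\log\log n)^{3/2}}{(\log n)^{1/2}}\,n\big)$. Given that, the modification and its $K_4$-freeness are routine.
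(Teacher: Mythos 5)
Your proposal is correct, and its overall strategy --- a quantitative analysis of the Bollob\'as--Erd\H{o}s graph with parameters $h\asymp \log n/\log\log n$ and $\mu\asymp(\log\log n)^2/\log n$, yielding independence number $n(\log n)^{-\Omega(1)}$ and edge deficit $\Theta\bigl(\tfrac{(\log\log n)^{3/2}}{(\log n)^{1/2}}\bigr)n^2$, followed by a densifying modification --- is exactly the paper's (Theorem~\ref{thm:BE}, Corollary~\ref{corbeconstruct2}, then Lemma~\ref{hybrid} via Corollary~\ref{easyco}). The parameter bookkeeping you flag as the main obstacle does close: the point-count constraint $n\geq(C/\mu)^{\Theta(h)}=e^{\Theta(\log n)}$ is compatible with these choices, and the isoperimetric and cap-measure bounds give precisely the independence number and density deficit you state.

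Where you differ is in the densification step. You adjoin $\delta n$ \emph{new} vertices forming an independent set complete to one triangle-free half $V_1$, while the paper (Lemma~\ref{hybrid}) instead selects \emph{existing} subsets $U_1\subset V_1$ and $U_2\subset V_2$ of size $d$ each, deletes their edges inside their own halves, and makes $U_i$ complete to $V_{3-i}$ (so in particular $U_1$ is complete to $U_2$). Both modifications preserve $K_4$-freeness by the same observation --- any clique vertex whose whole neighbourhood lies in a triangle-free half cannot extend to a $K_4$ --- and both control the independence number, yours because an independent set meeting $U$ must avoid $V_1$, the paper's because an independent set meets at most one of $U_1,U_2$. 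The trade-off: your version is simpler to verify (no averaging argument over which internal vertices to sacrifice), but it gains only about $\tfrac{\delta}{4}n^2$ edges per $\delta n$ of added independence number, versus $\tfrac{\delta}{2}n^2$ for the paper's two-sided internal rewiring. For Theorem~\ref{easythe} this only changes the constant $c'$, so your proof is complete as stated; the factor of $2$ matters only for the sharper constants in Theorem~\ref{theorybeyond}.
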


Recall that Bollob\'as and Erd\H{o}s \cite{BoEr-RT-construct} constructed a $K_4$-free graph on $n$ vertices with $(1-o(1))\frac{n^2}{8}$ edges with independence number $o(n)$. The various presentations of the proof of the Bollob\'as-Erd\H{o}s result in the literature \cite{BL}, \cite{BL2}, \cite{BoEr-RT-construct}, \cite{EHSSS}, \cite{EHSSS97}, \cite{SS01} do not give quantitative estimates on the little-$o$ terms. By finding good quantitative estimates for the relevant parameters, we can use the Bollob\'as-Erd\H{o}s graphs to prove the following theorem. This result gives a negative answer to Problem \ref{problEHSSS} of Erd\H{o}s, Hajnal, Simonovits, S\'os, and Szemer\'edi \cite{EHSSS}. It also complements the result of Sudakov \cite{Su-RT}, showing that the bound coming from the dependent random choice technique is close to optimal.

\begin{theorem}\label{corbeconstruct}
If $m=e^{-o\left((\log n / \log \log n)^{1/2}\right)}n$, then $${\bf RT}(n,K_4,m)
\geq \left(1/8-o(1)\right)n^2.$$
\end{theorem}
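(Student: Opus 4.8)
The plan is to start from the Bollob\'as-Erd\H{o}s construction and track all the hidden $o(1)$ terms quantitatively, then show that one can still push the construction to have $(1/8 - o(1))n^2$ edges while keeping the independence number as small as $e^{-o((\log n/\log\log n)^{1/2})}n$. The key point is that the Bollob\'as-Erd\H{o}s graph lives on a sphere $S^{d}$ of dimension $d$ that one is free to choose as a function of $n$; the number of edges lost (relative to $n^2/8$) and the independence number both depend on $d$ and on the discretization fineness, and these dependencies pull in opposite directions, so the whole game is to optimize $d$.

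First I would recall the structure of the Bollob\'as-Erd\H{o}s graph from Section~\ref{sectBEgraph}: take two copies $V_1,V_2$ of a fine net of points on the unit sphere $S^{d-1}$, each of size $n/2$; inside each $V_i$ join two points if the angle between them exceeds $\pi - \mu$ for a small parameter $\mu$ (a discretized Borsuk graph, which is $K_3$-free by a pigeonhole/antipodality argument); and between $V_1$ and $V_2$ join $x\in V_1$ to $y\in V_2$ if the Euclidean distance $\|x-y\|$ is at most some threshold $\rho$. The resulting graph is $K_4$-free because a $K_4$ would need two edges inside one side. The near-perfect bipartite graph between $V_1$ and $V_2$ contributes $(1-o(1))(n/2)^2 = (1-o(1))n^2/4$ edges, which is already $n^2/8$ minus a correction, and the within-side Borsuk graphs contribute a further positive but lower-order number of edges; the loss from $n^2/8$ comes precisely from the fact that the distance threshold excludes a small spherical cap's worth of pairs, a loss that scales like (roughly) $e^{-\Theta(\mu^2 d)}$ or a comparable function governed by the measure concentration on $S^{d-1}$.

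Next I would quantify the independence number. An independent set must, within each $V_i$, avoid the Borsuk edges, so its projection to the sphere must have angular diameter at most $\pi-\mu$, i.e.\ it lies in a ``double cap'' region; and across $V_1,V_2$ it must avoid short-distance pairs, forcing the two projected pieces to be far apart. Using the isoperimetric inequality on the sphere (which is exactly what Bollob\'as and Erd\H{o}s invoked), a set that is simultaneously not too spread out within a side and far from the other side has measure at most $e^{-\Theta(\mu^2 d)}$-ish, so the independence number is at most $n$ times a function decaying in $d$ (times a discretization correction which we control by choosing the net fine enough, e.g.\ spacing $n^{-\Theta(1)}$, which only changes things by polynomial factors absorbed into the $o(1)$). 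Balancing: we want the edge loss $\le o(1)\cdot n^2$, which is essentially free for any $d\to\infty$ slowly, while the independence number bound $n\cdot e^{-\Theta(\mu^2 d)}$ should be $\le e^{-\omega(\cdot)}n$; choosing $d$ on the order of $\log n/\log\log n$ (and $\mu$ a small constant, or decaying slowly) and checking that the net can be realized with $n/2$ points in dimension $d$ while the cap-measure computations go through, yields the claimed $m = e^{-o((\log n/\log\log n)^{1/2})}n$; the square-root appears because the relevant cap measure on $S^{d-1}$ behaves like $e^{-\Theta(\mu^2 d)}$ and matching exponents against $d\sim \log n/\log\log n$ with the discretization overhead produces the $1/2$ power.

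The main obstacle I expect is the bookkeeping of the discretization: all prior presentations state the construction with $o(1)$'s precisely because passing from the continuous sphere to a finite net of exactly $n/2$ points introduces error terms in the edge count, in the $K_4$-freeness (one must ensure no accidental triangle appears inside a side once points are perturbed to net points), and in the independence-number bound, and each of these must be bounded by an explicit function of $d$ and the net spacing that stays negligible for the chosen $d = \Theta(\log n/\log\log n)$. Concretely, the delicate step is showing the within-side graph stays triangle-free after discretization while the angular threshold $\pi-\mu$ is only slightly relaxed, and simultaneously that shrinking $\mu$ (needed to keep edges) does not blow up the independence number faster than the cap-measure estimate predicts. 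Once the explicit cap-measure and net-spacing inequalities are in hand, the optimization of $d$ is a short calculation, and Theorem~\ref{corbeconstruct} follows by reading off ${\bf RT}(n,K_4,m)$ from the edge count of the modified Bollob\'as-Erd\H{o}s graph.
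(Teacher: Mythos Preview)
Your plan is the paper's plan: quantify the Bollob\'as--Erd\H{o}s construction and then choose the sphere dimension optimally. You even land on the right choice $d \sim \log n/\log\log n$. But two of the explicit dependencies you wrote down are wrong, and since they drive the optimization, the argument as stated would not close.

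First, the edge loss from the bipartite part is not exponential in $d$. In the paper's parameterization one sets $\mu = \epsilon/\sqrt{h}$ and joins $x \in V_1$ to $y \in V_2$ when $\|x-y\| < \sqrt{2} - \mu$; the $V_2$-degree of each $x$ is then the measure of a cap just short of a hemisphere, which an elementary volume estimate (Lemma~\ref{lem:cap-upper}) shows is $\ge \tfrac12 - O(\epsilon)$. So the bipartite density is $\tfrac12 - \Theta(\epsilon)$ and the edge count is $\bigl(\tfrac18 - \Theta(\epsilon)\bigr)n^2$: the loss is \emph{linear} in $\epsilon$, not $e^{-\Theta(\mu^2 d)}$. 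In particular it is not ``essentially free for any $d \to \infty$'' --- you must separately send $\epsilon \to 0$, and this competes against the independence-number bound.

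Second, the independence-number exponent is $\Theta(\mu h)$, not $\Theta(\mu^2 h)$: an independent set inside one $V_i$ has Euclidean diameter at most $2-\mu$, and the spherical isoperimetric inequality (Corollary~\ref{cor:cap-upper}) bounds the measure of such a set by $2e^{-\mu h/2}$, giving independence number $\le 2n e^{-\mu h/4} = 2n e^{-\epsilon\sqrt{h}/4}$. Thus the square root in the statement comes \emph{directly} from the $\sqrt{h}$ in this exponent, not from balancing two exponential rates. The real tradeoff is: $\epsilon \to 0$ (for the edges) while $\epsilon\sqrt{h}\to\infty$ (for the independence number), subject to the discretization constraint $n \ge (C\sqrt{h}/\epsilon)^h$, which forces $h \lesssim \log n/\log\log n$. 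Taking $h = \log n/\log\log n$ and letting $\epsilon\to 0$ slowly then yields $m = e^{-o((\log n/\log\log n)^{1/2})}n$, exactly as in the paper's one-line deduction from Theorem~\ref{thm:BE}.
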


We summarize the results in the critical window in the following theorem. All of
the bounds, except for the first result in the first part, which is due to
Sudakov \cite{Su-RT}, are new.  As before, we write $f(n) \ll g(n)$ to
indicate that $f(n)/g(n) \rightarrow 0$ as $n \rightarrow \infty$.

\begin{theorem}\label{criticalwindow}
We have the following estimates. Here $c$, $c'$, and $\gamma_0$ are absolute constants.
\begin{enumerate}
\item If $m = e^{-\omega\left((\log n)^{1/2}\right)}n$, then ${\bf RT}(n,K_4,m) = o(n^2)$;  \\ \\
while if $m=e^{-o\left((\log n / \log \log n)^{1/2}\right)}n$, then ${\bf RT}(n,K_4,m)
\geq \left(1/8-o(1)\right)n^2$.
\item If $m = c n
  \cdot \frac{\log \log n}{\log n}$, then  ${\bf RT}(n,K_4,m)
\leq n^2/8$; \\ \\
while if $m=c' n \cdot \frac{(\log \log n)^{3/2}}{(\log n)^{1/2}}$, then ${\bf RT}(n,K_4,m)
\geq n^2/8$.

\item If $\frac{(\log \log n)^{3/2}}{(\log n)^{1/2}}\cdot n \ll m \leq
  \gamma_0 n$,
  we have
  \[
  \frac{n^2}{8}+\left(\frac{1}{3}-o(1)\right)m n \leq {\bf RT}(n,K_4,m)\leq
  \frac{n^2}{8}+\frac{3}{2}mn \,,
  \]
  where the constant $\frac{1}{3}$ can be replaced with $\frac{1}{2}$ in
  the range $m \ll n$.

\end{enumerate}

\end{theorem}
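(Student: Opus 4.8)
The theorem is a compilation, and each of its estimates is an immediate consequence of a result already established above, together with (for the first estimate of part~(1)) Sudakov's theorem \cite{Su-RT}. The plan is just to restate each of our main results in terms of the Ramsey--Tur\'an function ${\bf RT}(n,K_4,m)$, with care only about two routine points. First, the definition of ${\bf RT}(n,K_4,m)$ uses the strict condition ``independence number less than $m$'', whereas Theorems~\ref{thm:some-reg} and~\ref{thm:1/8} speak of ``an independent set of size greater than $\alpha$''; since the quantities playing the role of $\alpha$ here are non-integral, ``independence number less than $m$'' does imply ``no independent set of size greater than $m$'', so the two phrasings match after a harmless adjustment of implied constants. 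Second, part~(3) is asserted for all $m\le n/3$ while Theorem~\ref{thm:some-reg} applies only for $m<\gamma_0 n$; this is the one point discussed at the end.

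For part~(1), ${\bf RT}(n,K_4,m)=o(n^2)$ when $m=e^{-\omega((\log n)^{1/2})}n$ is Sudakov's theorem \cite{Su-RT}, and ${\bf RT}(n,K_4,m)\ge(1/8-o(1))n^2$ when $m=e^{-o((\log n/\log\log n)^{1/2})}n$ is Theorem~\ref{corbeconstruct}. For part~(2), the upper bound is the contrapositive of Theorem~\ref{thm:1/8}: an $n$-vertex $K_4$-free graph with independence number less than $m=c\,n\log\log n/\log n$ has no independent set of size greater than $c\,n\log\log n/\log n$, hence fewer than $n^2/8$ edges, so ${\bf RT}(n,K_4,m)\le n^2/8$. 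The lower bound is witnessed by the explicit $K_4$-free graph of Theorem~\ref{easythe}, which has at least $n^2/8$ edges and independence number below $m=c'\,n(\log\log n)^{3/2}/(\log n)^{1/2}$. One may take $c$ and $c'$ to be the constants produced by Theorems~\ref{thm:1/8} and~\ref{easythe}.

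For part~(3), the lower bound ${\bf RT}(n,K_4,m)\ge\frac{n^2}{8}+(\frac13-o(1))mn$ in the stated range, together with the strengthening of $\frac13$ to $\frac12$ when $m\ll n$, is exactly Theorem~\ref{theorybeyond} and the remark following it. For the upper bound, let $G$ be a $K_4$-free graph on $n$ vertices with independence number less than $m\le n/3$. If $m<\gamma_0 n$, with $\gamma_0$ as in Theorem~\ref{thm:some-reg}, then $G$ has no independent set of size greater than $m$, so Theorem~\ref{thm:some-reg} gives $e(G)<\frac{n^2}{8}+\frac32 mn$. If $m$ is bounded below by a constant fraction of $n$, the slack $\frac32 mn$ already dominates: every $K_4$-free graph has at most $n^2/3$ edges by Tur\'an's theorem, and $\frac{n^2}{3}\le\frac{n^2}{8}+\frac32 mn$ whenever $m\ge\frac{5}{36}n$.

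The only point that really needs checking --- the sole obstacle in an otherwise purely bookkeeping argument --- is that these two cases of the part~(3) upper bound together cover the whole interval $m\le n/3$, i.e.\ that the constant $\gamma_0$ coming out of the proof of Theorem~\ref{thm:some-reg} satisfies $\gamma_0\ge\frac{5}{36}$. One simply reads off the value of $\gamma_0$ from that proof and verifies this inequality; should it fail, the residual range $m\in[\gamma_0 n,\frac{5}{36}n]$ is handled separately by an elementary bound on ${\bf RT}(n,K_4,\delta n)$ for a fixed small $\delta$ in the linear regime. In any case, no idea beyond the theorems already proved is required.
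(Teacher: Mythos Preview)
Your proposal is correct in spirit and matches the paper's own treatment: Theorem~\ref{criticalwindow} is presented there purely as a summary of Theorems~\ref{thm:some-reg}, \ref{theorybeyond}, \ref{thm:1/8}, \ref{easythe}, \ref{corbeconstruct} and Sudakov's result, with no separate proof given. You trace each estimate to its source accurately, and your remarks on the strict-versus-nonstrict independence-number convention are fine.

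You are right to flag the range issue in the part~(3) upper bound, but your proposed resolution does not work. In the proof of Theorem~\ref{thm:some-reg} one has $\gamma_0=4\gamma$ with $\gamma=4\epsilon^2/M$, where $M=M(\epsilon)$ is the bound from the regularity lemma and $\epsilon$ is a fixed absolute constant; hence $\gamma_0$ is astronomically small and certainly $\gamma_0<\tfrac{5}{36}$, so ``reading off $\gamma_0$'' does not help. Your fallback, an unspecified ``elementary bound on ${\bf RT}(n,K_4,\delta n)$ for fixed small $\delta$,'' also does not close the interval $[\gamma_0 n,\tfrac{5}{36}n]$: Szemer\'edi's original theorem gives no control on the constant, and Theorem~\ref{thm:no-reg} yields $10^{10}$ rather than $\tfrac{3}{2}$. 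The paper itself simply does not address this range, so the constant $\tfrac{3}{2}$ over the full interval $m\le n/3$ is a mild informality in the summary statement rather than something derivable from the results proved. You have actually been more scrupulous than the paper here; just do not claim that the gap is resolved when it is not.
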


\medskip

\noindent {\bf Organization.}
In Section~\ref{sec:originalproof}, we recall the standard proof of Theorem \ref{thm:full-reg} using the regularity lemma. Our
new proof has two main steps. First, we show that every $K_4$-free
graph on $n$ vertices with at least $n^2/8$ edges and small
independence number must have a large cut, with very few non-crossing
edges. Second, we show that having a large cut implies the desired
Ramsey-Tur\'an result.

For the first step we present two different approaches. The first
approach, presented in Section~\ref{sec:great-cut-regularity}, is
conceptually simpler. Here we apply the regularity lemma with an
absolute constant level of precision and then apply the stability
result for triangle-free graphs to obtain a large cut that lets us
obtain Theorem~\ref{thm:some-reg}. The second approach, presented in
Section~\ref{sec:great-cut-no-regularity}, avoids using the regularity
lemma completely, and leads to Theorem~\ref{thm:no-reg}. Once we know
that the maximum cut is large, we proceed to the second step,
presented in Section~\ref{sec:stability}, where we obtain either a
$K_4$ or a large independent set. The conclusions of
the proofs are found in Section~\ref{sec:combine}. In Section~\ref{sec:DRC} we prove Theorem~\ref{thm:1/8} by introducing a new variant of the dependent random choice technique. In Section~\ref{sectBEgraph}, we give a quantitative proof of the Bollob\'as-Erd\H{o}s result, and use it to establish Theorem~\ref{corbeconstruct}. In Section~\ref{sectabove}, we show how to modify the Bollob\'as-Erd\H{o}s graph to get a slightly denser graph whose independence number is not much larger. We use this to establish Theorems \ref{theorybeyond} and \ref{easythe}. Finally, Section~\ref{sec:conclusion} contains some concluding remarks.
Throughout this paper, all logarithms are base $e$ unless otherwise
indicated. For the sake of clarity of presentation, we systematically omit
floor and ceiling signs whenever they are not crucial.

\section{The standard regularity proof} \label{sec:originalproof}

In this section we recall the standard proof of Theorem~\ref{thm:full-reg}. We reproduce the proof here because our proof of
Theorem~\ref{thm:some-reg} starts the same way.
We first need to properly state the regularity lemma, which requires some terminology. The edge density $d(X,Y)$ between two subsets of vertices of a graph $G$ is the fraction of pairs $(x, y) \in X \times Y$ that are edges of $G$. A pair $(X,Y)$ of vertex sets is called \emph{$\epsilon$-regular}
if for all $X' \subset X$ and $Y' \subset Y$ with $|X'| \geq \epsilon |X|$
and $|Y'| \geq \epsilon|Y|$, we have $|d(X',Y') - d(X,Y)| < \epsilon$. A
partition $V = V_1\cup \ldots \cup V_t$ is called equitable if $||V_i| -
|V_j|| \leq 1$ for all $i$ and $j$. The regularity lemma states that for
each $\epsilon>0$, there is a positive integer $M(\epsilon)$ such that the
vertices of any graph $G$ can be equitably partitioned $V(G) = V_1 \cup
\ldots \cup V_t$ into $\frac{1}{\epsilon} \leq t \leq M(\epsilon)$ parts where all but at most
$\epsilon t^2$ of the pairs $(V_i, V_j)$ are $\epsilon$-regular.

We next outline the standard proof of Theorem~\ref{thm:full-reg}. We apply Szemer\'edi's regularity lemma to
obtain a regular partition. The edge density between two parts cannot
exceed $\frac12 + \epsilon$, or else we can find a $K_4$ or a large
independent set. Then the reduced graph has density exceeding
$\frac12$, so by Mantel's theorem we can find three vertex sets
pairwise giving dense regular pairs, from which we can obtain a $K_4$
or a large independent set. We follow this outline with a few simple lemmas leading to the detailed proof.

\begin{lemma}
  Let $G$ be a $K_4$-free graph with independence number at most
  $\alpha$. Let $uv$ be an edge of $G$. Then $u$ and $v$ have at most $\alpha$
  common neighbors.
  \label{lem:edge-codegree}
\end{lemma}

\begin{proof} If we have an edge $uv$ whose endpoints have
codegree exceeding $\alpha$, then there is an edge $xy$ within the common
neighborhood of $u$ and $v$.  This forms a $K_4$.
\end{proof}

\begin{lemma}
  Let $t, \gamma > 0$ satisfy $\gamma t \leq 1$.  If $G$ is a $K_4$-free
  graph on $n$ vertices with independence number at most $\gamma n$, and $X$ and $Y$ are disjoint vertex subsets of size $n/t$, then the edge density between $X$ and $Y$ is at most
  $\frac{1}{2} + \gamma t$.
  \label{lem:pair-density-1/2}
\end{lemma}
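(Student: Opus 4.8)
Write $s = n/t$, so $|X| = |Y| = s$, and for $y \in Y$ let $d_X(y) = |N(y) \cap X|$, so that the quantity we must control is $e(X,Y) = \sum_{y \in Y} d_X(y)$, the desired bound being $e(X,Y) \le \left(\frac12 + \gamma t\right)s^2$ (note $\gamma n \cdot s = \gamma t \cdot s^2$). The plan is to bound $d_X(y)$ for all but a negligible set of vertices $y \in Y$, using Lemma~\ref{lem:edge-codegree} to control the degrees of pairs joined by an edge inside $Y$.

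The first step is the key local estimate: for any edge $y_1 y_2$ of $G$ with both endpoints in $Y$, Lemma~\ref{lem:edge-codegree} gives $|N(y_1) \cap N(y_2)| \le \gamma n$, hence by inclusion–exclusion
\[
d_X(y_1) + d_X(y_2) = |N_X(y_1) \cup N_X(y_2)| + |N_X(y_1) \cap N_X(y_2)| \le |X| + |N(y_1)\cap N(y_2)| \le s + \gamma n.
\]
The second step exploits this: call $y \in Y$ \emph{heavy} if $d_X(y) > \frac12(s + \gamma n)$, and let $Y_{\mathrm{hi}}$ be the set of heavy vertices. If two heavy vertices were adjacent, the displayed inequality would be violated, so $Y_{\mathrm{hi}}$ spans no edge of $G$; therefore $Y_{\mathrm{hi}}$ is an independent set and $|Y_{\mathrm{hi}}| \le \alpha(G) \le \gamma n$. (Here the hypothesis $\gamma t \le 1$, i.e.\ $\gamma n \le s$, keeps the threshold $\frac12(s+\gamma n)$ below the trivial bound $d_X(y) \le s$, so the notion of "heavy" is meaningful.)

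The final step is to add up. Each $y \in Y$ contributes at most $\frac12(s+\gamma n)$ to $e(X,Y)$, plus, if $y$ is heavy, an excess of at most $s - \frac12(s+\gamma n) = \frac12(s - \gamma n)$. Hence
\[
e(X,Y) \;\le\; |Y|\cdot \tfrac12(s+\gamma n) + |Y_{\mathrm{hi}}|\cdot \tfrac12(s - \gamma n)
\;\le\; \tfrac12 s(s+\gamma n) + \tfrac12\gamma n\, s
\;=\; \tfrac12 s^2 + \gamma n\, s,
\]
and dividing by $|X||Y| = s^2$ yields density at most $\frac12 + \gamma t$, as claimed.

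I expect the only genuinely non-obvious step to be the second one — recognizing that the vertices of large $X$-degree must form an independent set, which is exactly what converts the pairwise codegree bound from Lemma~\ref{lem:edge-codegree} into a global bound; the first and third steps are then routine inclusion–exclusion and summation.
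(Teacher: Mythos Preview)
Your proof is correct and follows essentially the same approach as the paper's: both arguments define the set of vertices whose cross-degree exceeds the threshold $\tfrac12(s+\gamma n) = \tfrac{n}{2t}+\tfrac{\gamma n}{2}$, observe via Lemma~\ref{lem:edge-codegree} that this set is independent (hence of size at most $\gamma n$), and then sum degrees. The only cosmetic difference is that you work with $X$-degrees of vertices in $Y$ while the paper works with $Y$-degrees of vertices in $X$, which by the symmetry of the statement is immaterial.
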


\begin{proof}
  Let $A \subset X$ be the vertices with $Y$-degree greater than
  $\frac{n}{2t} + \frac{\gamma n}{2}$.  If $A$ contains an edge, then the
  endpoints of that edge will have neighborhoods in $Y$ that overlap in
  more than $\gamma n$ vertices, contradicting the $K_4$-freeness of $G$ by
  Lemma \ref{lem:edge-codegree}. Hence, $A$ is an independent set and
  $|A|/|X| \leq \gamma t$. It follows that the edge density between $X$ and
  $Y$ is at most
\begin{align*}
  \frac{|A|}{|X|} \cdot 1
  + \left( 1 - \frac{|A|}{|X|} \right) \cdot
  \frac{\frac{n}{2t} + \frac{\gamma n}{2}}{|Y|}
  &\leq
  (\gamma t) 1
  + (1 - \gamma t) \cdot \frac{\frac{n}{2t} + \frac{\gamma n}{2}}{n/t} \\
  &=
  \gamma t
  + (1 - \gamma t) \cdot \frac{1}{2} (1 + \gamma t) \\
  &<
  \frac{1}{2} + \gamma t \,. \qedhere
\end{align*}
\end{proof}

\begin{lemma}
  Suppose that $X$, $Y$, and $Z$ are disjoint subsets of size $m$, and each
  of the three pairs are $\epsilon$-regular with edge density at least
  $3\epsilon$.  Then there is either a $K_4$ or an independent set of size
  at least $4\epsilon^2 m$.
  \label{lem:regular-triangle}
\end{lemma}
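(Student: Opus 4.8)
The plan is to show that if the subgraph of $G$ induced on $X\cup Y\cup Z$ is $K_4$-free, then it already contains an independent set of size at least $4\epsilon^2 m$; the two cases together give the lemma. (We may assume $\epsilon<1/3$; otherwise $3\epsilon\ge 1$ forces all three pairs to be complete with $\epsilon=1/3$, and then $K_4$-freeness makes each of $X,Y,Z$ independent, so $Y$ itself is an independent set of size $m\ge 4\epsilon^2 m$.) The core idea is a two-step vertex selection. First pick a ``typical'' vertex $v\in X$ whose neighborhoods $Y':=N(v)\cap Y$ and $Z':=N(v)\cap Z$ are both large. Because $Y'$ and $Z'$ are not-too-small subsets of $Y$ and $Z$, the $\epsilon$-regularity of the pair $(Y,Z)$ forces the bipartite graph between $Y'$ and $Z'$ to have density close to $d(Y,Z)\ge 3\epsilon$, hence bounded below by $2\epsilon$; by averaging, some $y\in Y'$ has many neighbors inside $Z'$. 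Finally, every such neighbor of $y$ in $Z'$ is a common neighbor of the edge $vy$, so by $K_4$-freeness (this is exactly the reasoning in the proof of Lemma~\ref{lem:edge-codegree}) these common neighbors form an independent set, and the quantitative choices above will have guaranteed at least $4\epsilon^2 m$ of them.

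To make this precise I would use the standard consequence of $\epsilon$-regularity: in an $\epsilon$-regular pair $(A,B)$ of density $d$, fewer than $\epsilon|A|$ vertices of $A$ have fewer than $(d-\epsilon)|B|$ neighbors in $B$ (else those low-degree vertices form a subset of $A$ of size $\ge\epsilon|A|$ witnessing a density drop of at least $\epsilon$, contradicting regularity). Applying this to $(X,Y)$ and to $(X,Z)$, all but fewer than $2\epsilon m$ vertices $v\in X$ satisfy $|N(v)\cap Y|\ge(3\epsilon-\epsilon)m=2\epsilon m$ and $|N(v)\cap Z|\ge 2\epsilon m$; since $\epsilon<1/3$ such a $v$ exists. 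Fixing it and setting $Y'=N(v)\cap Y$, $Z'=N(v)\cap Z$, we have $|Y'|,|Z'|\ge 2\epsilon m\ge \epsilon m=\epsilon|Y|=\epsilon|Z|$, so the $\epsilon$-regularity of $(Y,Z)$ applies to the pair $(Y',Z')$ and yields $d(Y',Z')>d(Y,Z)-\epsilon\ge 2\epsilon$. Hence the number of edges between $Y'$ and $Z'$ exceeds $2\epsilon|Y'||Z'|$, so some $y\in Y'$ has $|N(y)\cap Z'|>2\epsilon|Z'|\ge 4\epsilon^2 m$. Since $v$ is adjacent to $y$ and to every vertex of $Z'$, each vertex of $N(y)\cap Z'$ is a common neighbor of the edge $vy$; were two of them adjacent they would form a $K_4$ with $v$ and $y$, so $N(y)\cap Z'$ is an independent set of size greater than $4\epsilon^2 m$, as required.

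I do not expect a genuine obstacle here beyond careful bookkeeping. The one point that needs attention is that the $\epsilon$-regularity of the third pair $(Y,Z)$ can only be exploited on subsets of size at least $\epsilon m$, which is precisely why we first descend to the neighborhood of a single vertex $v$ (producing $Y',Z'$ of size about $2\epsilon m$) rather than, say, working inside the common neighborhood of an edge of $X$, which could be too small. One could alternatively invoke a triangle counting lemma to produce many triangles across $X,Y,Z$ and then reason about their vertices, but the explicit two-vertex selection above is cleaner and delivers the stated constant $4\epsilon^2$ with no loss.
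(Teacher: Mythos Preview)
Your proof is correct and follows essentially the same two-step vertex selection as the paper: pick $v\in X$ with large neighborhoods $Y',Z'$ in $Y,Z$, then use regularity of $(Y,Z)$ to find $y\in Y'$ with at least $4\epsilon^2 m$ neighbors in $Z'$, which is either independent or yields a $K_4$ with $v,y$. The only cosmetic difference is that the paper bounds the number of vertices of $Y$ with low $Z'$-degree and intersects with $Y'$, whereas you apply regularity directly to $(Y',Z')$ and average; the two are equivalent.
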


\begin{proof}
We may assume that $\epsilon < \frac{1}{3}$, as
otherwise the given conditions are vacuous.  By the regularity condition, at most an $\epsilon$-fraction of
the vertices of $X$ fail to have $Y$-density at least $2\epsilon$, and
at most $\epsilon$-fraction fail to have $Z$-density at least
$2\epsilon$.  Select one of the other vertices $x \in X$, and let $Y'$ and
$Z'$ be $x$'s neighborhoods in $Y$ and $Z$.  At most an $\epsilon$-fraction of
the vertices of $Y$ fail to have $Z'$-density at least $2\epsilon$, so
among the vertices of $Y'$, there are still at least $\epsilon m$ of them
that have $Z'$-density at least $2\epsilon$.  Pick one such $y \in Y'$.
Now $x$ and $y$ have at least $(2\epsilon)^2 m$ common neighbors in $Z$,
and that is either an independent set, or it contains an edge $uv$ which
forms a $K_4$ together with $x$ and $y$.
\end{proof}

\medskip

Now we recall the standard proof of Theorem~\ref{thm:full-reg} using
the regularity lemma.

\begin{proof}[Proof of Theorem~\ref{thm:full-reg}]
  Suppose we have a $K_4$-free graph $G$ on $n$ vertices with at least
  $(\frac{1}{8}+\epsilon)n^2$ edges. Let $\beta=\epsilon/6$,
  $M=M(\beta)$ be the bound on the number of parts for Szemer\'edi's
  regularity lemma with regularity parameter $\beta$, and
  $\delta=\epsilon^2/(9M)$. So $M$ and $\delta^{-1}$ are at most a
  tower of height $\epsilon^{-O(1)}$. We apply Szemer\'edi's
  regularity lemma with regularity parameter $\beta$ to get a
  regularity partition into $\frac{1}{\beta} \leq t \leq M$ parts. For clarity of
  presentation, we ignore floor signs here and assume all parts have
  exactly $n/t$ vertices. At most $\beta t^2 (n/t)^2\leq
  \epsilon n^2/6$ edges
  go between pairs of parts which are not $\beta$-regular, and at most
  $\epsilon n^2/4$ edges go between parts which have edge density less than
  $\epsilon/2$ between them. The number of edges within individual parts is
  less than $t \cdot (n/t)^2/2 = n^2/(2t) \leq \beta n^2/2 = \epsilon
  n^2/12$.  Thus, more than
  $\left(\frac{1}{8}+\frac{\epsilon}{2}\right)n^2$ edges of $G$ go between
  pairs of parts which are $\beta$-regular and have edge density at least
  $\epsilon/2$ between them.

  Since $\epsilon > \delta t$, by Lemma~\ref{lem:pair-density-1/2}, if
  there is no independent set of size $\delta n$, then the edge density
  between each pair of parts in the regularity partition is less than
  $\frac{1}{2}+\epsilon$.  Consider the $t$-vertex reduced graph $R$ of the
  regularity partition, whose vertices are the parts of the regularity
  partition, and two parts are adjacent if the pair is $\beta$-regular and
  the edge density between them is at least $\epsilon/2$. As there are more
  than $\left(\frac{1}{8}+\frac{\epsilon}{2}\right)n^2$ edges between pairs
  of parts which form edges of $R$, and the edge density between each pair
  is less than $\frac{1}{2}+\epsilon$, the number of edges of $R$ is more
  than $\left(\frac{1}{8}+\frac{\epsilon}{2}\right)n^2
  \Big/\left[\left(\frac{1}{2}+\epsilon\right)(n/t)^2\right] >
  \left(\frac{1}{4} + \frac{\epsilon}{4}\right)t^2$. By Mantel's theorem,
  $R$ must contain a triangle. That is, the regularity partition has three
  parts each pair of which is $\beta$-regular and of density at least
  $\epsilon/2$. As $\beta = (\epsilon/2)/3$, Lemma~\ref{lem:regular-triangle}
  tells us that there is an independent set of size greater than
  $\frac{\epsilon^2}{9} \frac{n}{t} \geq \delta n$.
\end{proof}

\medskip

We note that the above proof can be modified to use the weak
regularity lemma by Frieze and Kannan \cite{FK96, FK99} to give a singly
exponential dependence between $\delta$ and $\epsilon$, i.e., $\delta
= 2^{-\poly(\epsilon^{-1})}$. This observation was made jointly with David
Conlon. Here is a rough sketch. We apply the weak regularity lemma
with parameter $\beta = \poly(\epsilon)$ to
obtain a weakly regular partition of the graph into $t \leq
2^{O(\beta^{-2})}$ parts, and let $\delta^{-1} = \poly(t/\epsilon)$. As before, no pair of parts can have density
exceeding $\frac{1}{2} + \epsilon$, so the reduced graph has at least
$\left(\frac14 + \frac{\epsilon}{16}\right) t^2$ edges. Using
Goodman's triangle supersaturation result~\cite{Goodman}, there are at
least $\Omega(\epsilon t^3)$ triangles in the reduced graph. Applying the
triangle counting lemma associated to the weak regular partition \cite{BCLSV}
(i.e., counting lemma with respect to the cut norm) we see that $G$
has at least $\Omega(\epsilon^4 n^3)$ triangles. We then conclude
as before to show that $G$ must contains a large
independent set.

\medskip

In each of the above proofs, we needed to apply a regularity lemma with
the input parameter depending on $\epsilon$, so the dependency of $\delta$ on
$\epsilon$ is at the mercy of the regularity lemma,
which cannot be substantially improved (see \cite{CoFo}). In the next
section, we start a new proof where we only need to apply the
regularity lemma with an absolute constant regularity parameter, so that we can
obtain a very reasonable linear dependence between $\delta$ and $\epsilon$. In
Section~\ref{sec:great-cut-no-regularity} we provide an alternate
approach which completely avoids the use of regularity.

\section{Large cut via regularity lemma}
\label{sec:great-cut-regularity}

Now we begin the proof of Theorem~\ref{thm:some-reg}. It is conceptually
easier than the regularity-free approach (Theorem \ref{thm:no-reg}), so we
start with it. The proof follows the same initial lines as the original
argument in Section~\ref{sec:originalproof}, except that we only use as
much regularity as we need to find a large cut.  Importantly, the cut is
deemed satisfactory once its size is within an absolute constant
approximation factor of the true maximum cut, which asymptotically contains
$1 - o(1)$ proportion of all of the graph's edges.  We only need to apply
the regularity lemma with a prescribed absolute constant level of
precision, and this is key to developing the sharper dependence on the
independence number.

We need the following stability version of Mantel's theorem to obtain
our large cut.

\begin{theorem}
  [Erd\H{o}s~\cite{Erd67}, Simonovits~\cite{Sim68}] For every $\epsilon > 0$,
  there is a $\delta > 0$ such that every $n$-vertex triangle-free graph
  with more than $\big( \frac{1}{4} - \delta \big) n^2$ edges is within
  edit distance $\epsilon n^2$ from a complete bipartite graph.
  \label{thm:ES-stability}
\end{theorem}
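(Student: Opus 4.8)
The plan is to show that the classical double-counting proof of Mantel's theorem is \emph{stable}: near-equality in each step forces the graph to be close to complete bipartite. Fix a triangle-free graph $G$ on $n$ vertices with more than $\big(\frac14-\delta\big)n^2$ edges, let $u$ be a vertex of maximum degree $\Delta$, and set $A = N(u)$, $B = V(G)\setminus A$. Triangle-freeness supplies two facts: $A$ is an independent set, and every neighbor of a vertex $v\in A$ lies in $B$, since a common neighbor of $u$ and $v$ would complete a triangle. Hence $\sum_{v\in A} d(v) = e(A,B) \le |A||B| = \Delta(n-\Delta)$, while $\sum_{v\in B} d(v) \le \Delta|B| = \Delta(n-\Delta)$ because $\Delta$ is the maximum degree. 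Both quantities are at most $n^2/4$ by AM--GM, so $2m = \sum_{v\in A}d(v) + \sum_{v\in B}d(v) \le n^2/2$; this recovers Mantel's theorem, and the point is that we will now mine its slack.

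Next I would extract the structure from the hypothesis $2m > \big(\frac12 - 2\delta\big)n^2$. Since each of the two sums above is at most $n^2/4$, each must in fact exceed $\big(\frac14 - 2\delta\big)n^2$. From $e(A,B) = \sum_{v\in A}d(v) > \big(\frac14 - 2\delta\big)n^2$ together with Mantel's bound $m\le n^2/4$, the number of edges inside $B$ satisfies $e(B) = m - e(A,B) < 2\delta n^2$ (there are no edges inside $A$), and the number of non-edges between $A$ and $B$ is $|A||B| - e(A,B) \le \tfrac14 n^2 - \big(\tfrac14 - 2\delta\big)n^2 = 2\delta n^2$. Turning $G$ into the complete bipartite graph with parts $A$ and $B$ therefore requires adding fewer than $2\delta n^2$ edges (the absent $A$--$B$ pairs) and deleting fewer than $2\delta n^2$ edges (those inside $B$), a total of fewer than $4\delta n^2$ modifications; choosing $\delta = \epsilon/4$ yields edit distance at most $\epsilon n^2$, as required, with $\delta$ depending only on $\epsilon$ and not on $n$.

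I do not expect a genuine obstacle here: the whole argument is a short, self-contained double count. The only real idea is the choice of bipartition --- the neighborhood of a maximum-degree vertex against its complement --- and the observation that Mantel's inequality $m \le n^2/4$ is precisely what controls the number of edges inside $B$. I would only take minor care with the degenerate cases where the statement is vacuous (for instance $\delta \ge \frac18$, or $m$ too small to force $\Delta$ near $n/2$) and with the precise convention for edit distance to a complete bipartite graph on the same vertex set, but neither affects the core estimate.
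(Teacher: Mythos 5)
Your proof is correct. Note that the paper does not actually prove this statement---it is quoted as a classical result of Erd\H{o}s and Simonovits and used as a black box in Lemma 3.2---so there is no internal proof to compare against; what you have written is the standard short stability argument for the triangle case (partition by $A=N(u)$ for $u$ of maximum degree, observe $A$ is independent, and mine the slack in the two degree sums $e(A,B)\le |A||B|\le n^2/4$ and $\sum_{v\in B}d(v)\le \Delta|B|\le n^2/4$). All the estimates check out: each sum must exceed $(\tfrac14-2\delta)n^2$, forcing $e(B)<2\delta n^2$ and fewer than $2\delta n^2$ missing $A$--$B$ pairs, so $\delta=\epsilon/4$ suffices, and the degenerate cases you flag are indeed vacuous.
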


We use this result to obtain the following lemma.

\begin{lemma}
  For every $c > 0$ there is a $\gamma > 0$ such that every $K_4$-free graph $G$ on $n$ vertices with at least $\frac{n^2}{8}$
  edges and independence number less than $\gamma n$ has a cut which has at most $c n^2$ non-crossing edges.
  \label{lem:reg:great-cut}
\end{lemma}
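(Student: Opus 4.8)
The plan is to run the first few steps of the standard regularity proof from Section~\ref{sec:originalproof}, but with an \emph{absolute constant} regularity parameter $\beta$, and then feed the resulting dense reduced graph into the Erd\H{o}s--Simonovits stability theorem (Theorem~\ref{thm:ES-stability}) to extract the cut. Concretely, given $c>0$, I would first fix $\beta$ to be a sufficiently small absolute constant (depending only on $c$), let $M=M(\beta)$ be the number of parts guaranteed by the regularity lemma, and then choose $\gamma>0$ small enough in terms of $c$ and $M$ (so that $\gamma M$ is tiny). Apply Szemer\'edi's regularity lemma with parameter $\beta$ to $G$ to obtain an equitable partition into $1/\beta \le t \le M$ parts, each of size $n/t$. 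Form the reduced graph $R$ on $t$ vertices, joining two parts when the pair is $\beta$-regular and has density at least, say, $2\beta$. Just as in the standard proof, Lemma~\ref{lem:pair-density-1/2} (applicable since $\gamma t \le \gamma M$ is small) shows no pair has density exceeding $\tfrac12+\gamma t < \tfrac12 + o(1)$, while at most $O(\beta)n^2$ edges lie inside parts, between irregular pairs, or between low-density pairs; hence $R$ has at least $\big(\tfrac14 - O(\beta)\big)t^2$ edges.

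The key new ingredient is that $R$ must itself be nearly bipartite. If $R$ contained a triangle, then by Lemma~\ref{lem:regular-triangle} (with the density threshold $2\beta \ge 3\beta'$ for a slightly smaller $\beta'$, or simply re-running its argument) we would find a $K_4$ or an independent set of size $\Omega(\beta^2)\cdot n/t \ge \Omega(\beta^2/M)\, n > \gamma n$, a contradiction once $\gamma$ is small enough. So $R$ is triangle-free with more than $\big(\tfrac14-O(\beta)\big)t^2$ edges, and Theorem~\ref{thm:ES-stability} (with its $\epsilon$ chosen to be a small multiple of $c$) tells us $R$ is within edit distance $\epsilon t^2$ of a complete bipartite graph; that is, there is a partition $V(R) = \mathcal A \cup \mathcal B$ such that all but at most $\epsilon t^2$ pairs within $\mathcal A$ or within $\mathcal B$ are non-edges of $R$, and all but at most $\epsilon t^2$ crossing pairs are edges of $R$.

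Now lift this bipartition back to $G$: let $A$ be the union of the parts in $\mathcal A$ and $B$ the union of the parts in $\mathcal B$, giving a cut $(A,B)$ of $G$. I then count the edges of $G$ \emph{not} crossing this cut. Such an edge lies either (i) inside some part $V_i$ — at most $t\cdot(n/t)^2/2 = n^2/(2t) \le \beta n^2/2$ edges total; or (ii) between two parts $V_i, V_j$ on the same side of the bipartition — but each such pair either is one of the $\le \epsilon t^2$ ``bad'' non-edges of $R$ (contributing at most $(n/t)^2$ edges each, so $\le \epsilon n^2$ in total) or is a genuine non-edge of $R$, meaning the pair has density less than $2\beta$ or is $\beta$-irregular, contributing at most $(2\beta + \beta)(n/t)^2$ per pair, hence $O(\beta)n^2$ summed over all $\le t^2/2$ such pairs. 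Adding these up, the number of non-crossing edges is at most $O(\beta)\,n^2 + \epsilon n^2$, which is at most $c n^2$ provided $\beta$ and $\epsilon$ were chosen as small constant multiples of $c$ at the outset. (Note we do not even need to bound the crossing edges from below for the statement as phrased.)

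The main obstacle — really the only delicate point — is the bookkeeping of the quantifier chain: $\gamma$ must be small relative to $\beta^2/M(\beta)$ so that the ``no triangle in $R$'' step goes through, while $\beta$ and the stability parameter $\epsilon$ must be small relative to $c$ so that the final edge count lands below $cn^2$; since $M(\beta)$ is a fixed (if enormous) function of $\beta$, and $\beta$ is a fixed function of $c$, these choices can all be made consistently and in the right order. One should also double-check that the density threshold used to define $R$ is chosen compatibly with the hypothesis of Lemma~\ref{lem:regular-triangle} (which wants density $\ge 3\beta$); shrinking $\beta$ by a constant factor where needed handles this without affecting anything. Everything else is the routine edge accounting already illustrated in the proof of Theorem~\ref{thm:full-reg}.
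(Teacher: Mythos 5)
Your proposal is correct and follows essentially the same route as the paper: apply the regularity lemma with a constant parameter, use Lemma~\ref{lem:pair-density-1/2} to cap pair densities near $\tfrac12$ so the reduced graph has more than $\bigl(\tfrac14-O(\beta)\bigr)t^2$ edges, observe via Lemma~\ref{lem:regular-triangle} that it is triangle-free, invoke Erd\H{o}s--Simonovits stability, and lift the resulting bipartition back to $G$. The only nit is the per-pair bound for $\beta$-irregular same-side pairs (such a pair can be dense, so one bounds their total contribution by $\beta t^2\cdot(n/t)^2=\beta n^2$ using that there are few of them, exactly as in the paper's estimate \eqref{eq:reg:outside-H}), which does not affect the conclusion.
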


\begin{proof}
  Let $\nu$ be the $\delta$ produced by Theorem
\ref{thm:ES-stability} when applied with $\frac{c}{2}$ as the input.  Let
$\epsilon = \min\big\{ \frac{\nu}{7}, \frac{c}{6} \big\}$.  We apply
Szemer\'edi's regularity lemma to $G$ with parameter $\epsilon$, to find a
partition of the vertex set into $t$ parts of equal size, where all but at
most $\epsilon t^2$ pairs of parts are $\epsilon$-regular, and
$\frac{1}{\epsilon} \leq t \leq M$.  Importantly, $M$ depends only on
$\epsilon$, and is completely independent of $n$.  Let $\gamma = 4
\epsilon^2 / M$.

Let $H$ be the reduced graph of the regularity partition. That is, $H$ is a graph on $t$ vertices where each vertex corresponds to one of the parts of the regularity partition.  Place an edge between a pair
of vertices in $H$ if and only if the corresponding pair of parts is
$\epsilon$-regular with edge density greater than $3\epsilon$.  The total
number of edges of $G$ not represented in $H$ is at most
\begin{align}
  \nonumber
  t \binom{n/t}{2}
  +
  \epsilon t^2 \left( \frac{n}{t} \right)^2
  +
  \binom{t}{2} (3\epsilon) \left( \frac{n}{t} \right)^2
  &<
  \frac{n^2}{2t}
  +
  \epsilon n^2
  +
  \frac{3}{2} \epsilon n^2 \\
  &\leq
  3\epsilon n^2 \,.
  \label{eq:reg:outside-H}
\end{align}
The first term came from the edges within individual parts of the
regularity partition, the second term came from pairs that were not
$\epsilon$-regular, and the third term came from pairs that had density at
most $3\epsilon$.

Let $m$ be the number of edges of $H$.  Lemma \ref{lem:pair-density-1/2}
bounds all pairwise densities by at most
\begin{displaymath}
  \frac{1}{2} + \gamma t
  \leq
  \frac{1}{2} + \gamma M
  =
  \frac{1}{2} + 4 \epsilon^2 \,.
\end{displaymath}
Therefore, the number of edges in the original graph is at most
\begin{displaymath}
  e(G)
  \leq
  m \left( \frac{1}{2} + 4 \epsilon^2 \right) \left( \frac{n}{t} \right)^2
  +
  3\epsilon n^2 \,.
\end{displaymath}
Yet we assumed that $G$ had at least $\frac{n^2}{8}$ edges, so dividing, we
find that
\begin{align*}
  m
  &\geq
  \frac{ \frac{1}{8} - 3 \epsilon }{ \frac{1}{2} + 4\epsilon^2 } \cdot t^2
  \\
  &>
  \left( \frac{1}{4} - 6 \epsilon \right)
  \left( 1 - 8\epsilon^2 \right) t^2 \\
  &>
  \left( \frac{1}{4} - 7 \epsilon \right) t^2 \,.
\end{align*}

As the independence number of $G$ is less than $\gamma n \leq 4\epsilon^2 n/M$ and the parts of the regularity partition have order $n/M$, by Lemma \ref{lem:regular-triangle} the auxiliary graph $H$ must be triangle-free. We may now appeal to the Erd\H{o}s-Simonovits stability (Theorem
\ref{thm:ES-stability}), which by our choice of $\epsilon$ implies that
$H$ is within $\frac{ct^2}{2}$ edges of being complete bipartite.  In
particular, there is a cut of $H$ which has at most $\frac{ct^2}{2}$
non-crossing edges.  Consider the corresponding cut of $G$.  Even if those
non-crossing edges of $H$ corresponded to pairs of full density, after
adding \eqref{eq:reg:outside-H} we find that the total number of
non-crossing edges of $G$ is at most
\begin{displaymath}
  \frac{ct^2}{2} \left( \frac{n}{t} \right)^2
  + 3 \epsilon n^2
  \leq
  cn^2 \,,
\end{displaymath}
as desired.
\end{proof}

\medskip

Now we deviate from the original regularity-based approach.  Our next
ingredient is a minimum-degree condition.

\begin{lemma}
  Let $G$ be a graph with $n$ vertices and $m$ edges, and suppose there is
  a vertex with degree at most $\frac{m}{n}$.  Delete the vertex from $G$,
  and let the resulting graph have $n' = n-1$ vertices and $m'$ edges.
  Then $\frac{m'}{n'} \geq \frac{m}{n}$.
  \label{lem:delete-density+}
\end{lemma}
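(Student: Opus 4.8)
The plan is a one-line computation. Let $d$ denote the degree of the deleted vertex, so that by hypothesis $d \le \frac{m}{n}$. Removing this vertex deletes exactly its $d$ incident edges, hence $m' = m - d$ and $n' = n - 1$, and the claim $\frac{m'}{n'} \ge \frac{m}{n}$ becomes $\frac{m-d}{n-1} \ge \frac{m}{n}$. Since $n' = n-1 > 0$ (for $n=1$ the statement is vacuous), we may clear denominators: the inequality is equivalent to $n(m-d) \ge m(n-1)$, i.e. to $nm - nd \ge nm - m$, i.e. to $nd \le m$, which is exactly the hypothesis $d \le \frac{m}{n}$. So equality in the hypothesis gives equality in the conclusion, and strict inequality propagates.

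Conceptually this is just the statement that discarding a vertex whose degree is at most the current edge-to-vertex ratio cannot decrease that ratio; the quantity $\frac{m-d}{n-1}$ is an ``update'' of $\frac{m}{n}$ that moves it away from the value $d$. I expect no real obstacle here: the only points worth a remark are the trivial degenerate case $n=1$ (where $n'=0$ and the lemma carries no content) and the fact that, since we only ever compare $d$ with the threshold $\frac{m}{n}$, no rounding or floor issues intervene. This lemma will presumably be iterated to pass to a subgraph in which every vertex has degree at least the (non-decreasing) density $\frac{m}{n}$, without ever dropping below the initial density.
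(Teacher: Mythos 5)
Your proposal is correct and is essentially the same one-line computation as the paper's: the paper writes $m' \geq m - \frac{m}{n} = m\cdot\frac{n-1}{n}$ and divides by $n' = n-1$, whereas you clear denominators, but these are trivially equivalent. No issues.
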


\begin{proof}After deletion, the number of edges is $m' \geq m
- \frac{m}{n} = m \big( 1 - \frac{1}{n} \big) = m \cdot \frac{n-1}{n} = m
\cdot \frac{n'}{n}$, and therefore $\frac{m'}{n'} \geq \frac{m}{n}$.
\end{proof}

\begin{lemma}
Let $G$ be an $n$-vertex graph with at least $m$ edges. Then $G$ contains an induced subgraph $G'$ with $n'>2m/n$ vertices, at least $n'\frac{m}{n}$ edges, and minimum degree at least $\frac{m}{n}$.
  \label{lem:min-degree-1/8}
\end{lemma}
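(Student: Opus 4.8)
The plan is to iterate Lemma~\ref{lem:delete-density+}, peeling off low-degree vertices. Write $\rho = m/n$ and set $G_0 = G$, so that $G_0$ has $n_0 = n$ vertices and $m_0 \geq m$ edges, whence $m_0/n_0 \geq \rho$. As long as the current graph $G_i$ contains a vertex of degree at most $\rho$, delete one such vertex to obtain $G_{i+1}$. The point is that this maintains the invariant $m_i/n_i \geq \rho$ for all $i$: if $m_i/n_i \geq \rho$ and $v \in G_i$ has degree at most $\rho \leq m_i/n_i$, then Lemma~\ref{lem:delete-density+} gives $m_{i+1}/n_{i+1} \geq m_i/n_i \geq \rho$. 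Since $n_i$ decreases by one at each step, the process terminates, say at $G' = G_k$ with $n' := n_k$ vertices and $m' := m_k$ edges.

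I claim $G'$ has the three desired properties. It is an induced subgraph of $G$ since it is obtained by deleting vertices. The invariant gives $m' \geq n'\rho = n' \cdot \frac{m}{n}$, the required edge bound. For the minimum-degree bound, assume $m \geq 1$ (when $m = 0$ one may simply take $G' = G$); then $\rho > 0$, so a graph on a single vertex would have $m_i = 0 < \rho = n_i \rho$, violating the invariant, and hence $n_i \geq 1$ always --- in particular the process cannot terminate by exhausting all vertices. Therefore it terminates because $G'$ contains no vertex of degree at most $\rho$, i.e.\ every vertex of $G'$ has degree greater than $\rho \geq m/n$. Finally, combining $m' \geq n'\rho$ with $m' \leq \binom{n'}{2} = \frac{n'(n'-1)}{2}$ and dividing by $n' \geq 1$ gives $\rho \leq \frac{n'-1}{2}$, i.e.\ $n' \geq 2\rho + 1 > 2m/n$.

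This is a routine peeling argument with no real obstacle. The two points worth a moment of care are: first, one should delete with respect to the fixed threshold $\rho = m/n$ rather than the current ratio $m_i/n_i$, so that Lemma~\ref{lem:delete-density+} applies cleanly (degree at most $\rho$ is indeed at most $m_i/n_i$ by the invariant); and second, one must rule out the process running all the way down to the empty graph, which is precisely what the invariant does and what lets us conclude that the terminal graph has minimum degree exceeding $m/n$ and hence more than $2m/n$ vertices.
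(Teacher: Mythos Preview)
Your proof is correct and follows essentially the same approach as the paper: iteratively delete vertices of degree at most $m/n$, using Lemma~\ref{lem:delete-density+} to maintain the edge-to-vertex ratio, and read off the three conclusions from the terminal graph. The paper obtains the edge count via the equivalent bookkeeping $m - (n-n')\tfrac{m}{n} = n'\tfrac{m}{n}$, and derives $n' > 2m/n$ from ``half the average degree is less than the vertex count,'' which is exactly your $\binom{n'}{2}$ inequality; your added care with the $m=0$ case and the nontermination-at-zero argument are minor refinements but not a different method.
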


\begin{proof} Repeat the following procedure: as long as the
graph contains a vertex $v$ of degree at most $\frac{m}{n}$, remove $v$.  Let $G'$ be the resulting induced subgraph when this process terminates, and $n'$ be the number of vertices of $G'$. Note that at the very
beginning, the ratio of edges to vertices is at least $\frac{m}{n}$, and
by Lemma \ref{lem:delete-density+}, this ratio does not decrease in each iteration.  Therefore, throughout the process, the ratio of the number of edges to the number of vertices is always at least
$\frac{m}{n}$. Yet this ratio is precisely half of the average degree of the graph, which is less than the number of vertices of the graph, so we must have $n' > 2\frac{m}{n}$.  Also,  the number of edges of $G'$ is at least $m-(n-n')\frac{m}{n}=n'\frac{m}{n}$. Finally, as no more vertices are deleted, $G'$ has minimum degree at least $\frac{m}{n}$.
\end{proof}

\medskip

At this point, we switch gears, and introduce our regularity-free approach,
which will also reach this same point.  After both approaches have arrived
here, we will complete both proofs with the same argument.

\section{Large cut without regularity}
\label{sec:great-cut-no-regularity}

In this section, we assume the conditions of Theorem \ref{thm:no-reg}.

\begin{lemma}
  Theorem \ref{thm:no-reg} is trivial unless $\alpha \leq n / (2 \cdot
  10^{10})$.
  \label{lem:indep-cap}
\end{lemma}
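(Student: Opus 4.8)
The plan is to observe that when $\alpha > n/(2\cdot 10^{10})$, the hypothesis of Theorem~\ref{thm:no-reg} on the number of edges is never satisfied, so that its conclusion holds vacuously. Concretely, in that case
\[
  \frac{n^2}{8} + 10^{10}\alpha n
  >
  \frac{n^2}{8} + 10^{10}\cdot\frac{n}{2\cdot 10^{10}}\cdot n
  =
  \frac{n^2}{8} + \frac{n^2}{2}
  =
  \frac{5n^2}{8}.
\]
On the other hand, every $n$-vertex graph has at most $\binom{n}{2} = \frac{n^2-n}{2} \leq \frac{n^2}{2} < \frac{5n^2}{8}$ edges. Hence no $n$-vertex graph can have at least $\frac{n^2}{8}+10^{10}\alpha n$ edges, so there is no graph to which the theorem must be applied, and we are free to assume $\alpha \leq n/(2\cdot 10^{10})$ from now on.

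There is essentially no obstacle to carrying this out: the only inequality one needs is the trivial bound $\binom{n}{2}\leq n^2/2$, and everything else is immediate arithmetic. The sole reason to isolate this statement as a lemma is bookkeeping for what follows: the regularity-free argument will repeatedly want to treat quantities of the form $\poly(\alpha/n)\cdot n^2$ as genuinely lower order, which requires $\alpha$ to be a small fraction of $n$, and this lemma records that restricting to that regime costs nothing.
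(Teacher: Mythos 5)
Your proof is correct and is exactly the paper's argument: if $\alpha > n/(2\cdot 10^{10})$ then the required edge count exceeds $\binom{n}{2}$, so the theorem holds vacuously. You have merely written out the arithmetic that the paper leaves implicit.
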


\begin{proof} Theorem \ref{thm:no-reg} assumes that the number
of edges is at least $\frac{n^2}{8} + 10^{10} \alpha n$.  But if $\alpha >
n / (2 \cdot 10^{10})$, then this number already rises above
$\binom{n}{2}$, and the theorem becomes vacuous because there are no graphs
with that many edges.
\end{proof}

\begin{lemma}
  When we are proving Theorem \ref{thm:no-reg}, we may assume that all
  degrees are at least $\frac{n}{4} + (10^{10}-1)\alpha$, or else we are
  already done.
  \label{lem:min-degree}
\end{lemma}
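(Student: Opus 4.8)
The plan is to treat this as the minimum-degree reduction step in an induction on the number of vertices $n$ in the proof of Theorem~\ref{thm:no-reg}. First I would clear away the trivial case $\alpha < 1$: then any single vertex is already an independent set of size $1 > \alpha$, so Theorem~\ref{thm:no-reg} holds outright. Hence I may assume $\alpha \geq 1$, and in particular $\alpha \geq \tfrac18$. Combined with Lemma~\ref{lem:indep-cap}, the only range that needs work is $1 \le \alpha \le n/(2\cdot 10^{10})$; for all smaller $n$ the hypothesis of Theorem~\ref{thm:no-reg} cannot be met (for instance $n = 1$ leaves no room for the required $\tfrac18 + 10^{10}\alpha$ edges), and these vacuous cases form the base of the induction.

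For the inductive step, suppose $G$ is an $n$-vertex graph with at least $\tfrac{n^2}{8} + 10^{10}\alpha n$ edges, and suppose some vertex $v$ of $G$ has degree less than $\tfrac n4 + (10^{10}-1)\alpha$. I would delete $v$ and verify that the resulting graph $G' = G - v$ on $n - 1$ vertices again satisfies the hypothesis of Theorem~\ref{thm:no-reg} with the \emph{same} $\alpha$: indeed $e(G') = e(G) - \deg(v) > \tfrac{n^2}{8} + 10^{10}\alpha n - \big(\tfrac n4 + (10^{10}-1)\alpha\big)$, and a one-line computation rewrites the right-hand side as $\tfrac{(n-1)^2}{8} + 10^{10}\alpha(n-1) + \big(\alpha - \tfrac18\big)$, which is at least $\tfrac{(n-1)^2}{8} + 10^{10}\alpha(n-1)$ exactly because $\alpha \geq \tfrac18$. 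By the induction hypothesis, $G'$ — and therefore $G$ — contains a $K_4$ or an independent set larger than $\alpha$, so Theorem~\ref{thm:no-reg} holds for $G$. Hence, when proving Theorem~\ref{thm:no-reg} we may assume that every vertex has degree at least $\tfrac n4 + (10^{10}-1)\alpha$, which is the assertion.

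I do not expect a genuine obstacle here: the whole content is the single arithmetic identity that deleting a vertex whose degree is below $\tfrac n4 + (10^{10}-1)\alpha$ preserves — in fact improves, by the nonnegative slack $\alpha - \tfrac18$ — the quadratic edge threshold of Theorem~\ref{thm:no-reg}, together with the bookkeeping of running the induction on $n$ with $\alpha$ held fixed. The only subtlety worth stating explicitly is why the slack $\alpha - \tfrac18$ is nonnegative, which is precisely the normalization $\alpha \ge 1$ (equivalently $\alpha \ge \tfrac18$) recorded at the start; the small-$n$ base cases are disposed of by Lemma~\ref{lem:indep-cap} or by direct inspection.
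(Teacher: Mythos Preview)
Your proposal is correct and follows essentially the same approach as the paper: induction on $n$ with $\alpha$ fixed, deleting a low-degree vertex and checking that the edge threshold is preserved, with the base cases handled by the observation that $\alpha \geq 1$ makes the hypothesis vacuous for small $n$. The only cosmetic difference is that the paper phrases the inductive step contrapositively (assume no $K_4$ and no large independent set, deduce the minimum-degree bound), whereas you phrase it directly (if a low-degree vertex exists, induction finishes); the underlying arithmetic, including the slack $\alpha - \tfrac{1}{8} \geq 0$, is identical.
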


\begin{proof}
  Let $C = 10^{10}$, so that we are proving that every graph with no $K_4$
  and no independent set of size greater than $\alpha$ must contain fewer
  than $\frac{n^2}{8} + C \alpha n$ edges.  We proceed by induction on $n$.
  Theorem \ref{thm:no-reg} is trivial unless $\alpha \geq 1$, in which case
  $C \alpha n$ is already at least $Cn$.  This exceeds $\binom{n}{2}$ for
  all $n \leq 2C$, so those serve as our base cases.

For the induction step, let $G$ be a graph with at least $\frac{n^2}{8} + C
\alpha n$ edges, and assume that the result is known for $n-1$.  Suppose
for the sake of contradiction that $G$ has no $K_4$ or independent set of
size greater than $\alpha$.  Let $\delta$ be its minimum degree, and delete
its minimum degree vertex.  The resulting graph also has no $K_4$ or independent set of size greater than $\alpha$, so by the induction
hypothesis,
\begin{displaymath}
  e(G) - \delta
  <
  \frac{(n-1)^2}{8} + C \alpha (n-1) \,.
\end{displaymath}
Yet we assumed that $e(G) \geq \frac{n^2}{8} + C \alpha n$.  Combining
these, we find that
\begin{align*}
  \frac{n^2}{8} + C \alpha n - \delta
  &<
  \frac{(n-1)^2}{8} + C \alpha (n-1) \\
  C \alpha + \frac{n}{4} - \frac{1}{8}
  &<
  \delta \,.
\end{align*}
Therefore, $\delta > \frac{n}{4} + (C-1) \alpha$.
\end{proof}

\medskip

This strong minimum degree condition establishes that every neighborhood
has size greater than $n/4$.  The first step of our regularity-free
approach associates a large set of neighbors to each vertex.

\begin{definition}
  For each vertex $v$ in $G$, arbitrarily select a set of exactly
  $n/4$ neighbors of $v$, and call that set $N_v$.  Define the
  remainder $R_v$ to be the complement of $N_v$.
  \label{def:Nv}
\end{definition}

\begin{definition}
  If a vertex $u \in R_v$ has density to $N_v$ in the range $[0.3, 0.34]$
  we say that $u$ \textbf{trisects} $v$.
\end{definition}

The next lemma blocks an extreme case which would otherwise obstruct our
proof.

\begin{lemma}
  Let $G$ be a graph on $n$ vertices with minimum degree at least
  $n/4$. Suppose that for every vertex $v$, all but at most $0.03n$ vertices
  of $R_v$ trisect $v$. Then there is either a $K_4$ or an independent set
  of size at least $n/1200$.
  \label{lem:trisect}
\end{lemma}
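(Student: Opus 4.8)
The plan is to argue by contradiction: assume $G$ contains no $K_4$ and no independent set of size greater than $n/1200$, and extract a contradiction from the hypothesis together with the minimum degree condition.

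First I would dispatch the easy reductions. If some vertex $v$ had a trisector $u$ that is \emph{adjacent} to $v$, then $N(u)\cap N_v$ would be a set of at least $0.3\cdot(n/4)=0.075n$ common neighbours of the edge $uv$; by Lemma~\ref{lem:edge-codegree} this set is independent, hence an independent set of size $\geq 0.075n > n/1200$ — contradiction. So every trisector of every $v$ is a non-neighbour of $v$. Since all but $0.03n$ of the $3n/4$ vertices of $R_v$ trisect $v$, this forces $v$ to have at least $0.72n$ non-neighbours, so $\deg(v)\le 0.28n$; with the hypothesis $\deg(v)\ge n/4$, every degree lies in the band $[n/4,\,0.28n]$. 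Two more consequences of Lemma~\ref{lem:edge-codegree}: every edge has codegree less than $n/1200$ (its common neighbourhood is independent), and $G[N_v]$ is triangle-free (a triangle there plus $v$ is a $K_4$). Finally, for $b\in N_v$ the edge $bv$ gives $|N(b)\cap N(v)|<n/1200$, so $b$ has at least $\deg(b)-n/1200-1\ge n/4-n/1200-1$ neighbours outside $N(v)\cup\{v\}$, and all but at most $0.03n$ of these lie in $T_v$; hence every $b\in N_v$ has at least (say) $0.218n$ neighbours in $T_v$.

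The heart of the matter is to exploit the narrowness of the window $[0.3,0.34]$. Fix $v$, write $S=T_v$ (so $|S|\ge 0.72n$), and for $u\in S$ put $B_u:=N(u)\cap N_v$, so $0.075n\le |B_u|\le 0.085n$. The decisive observation is a dichotomy between adjacency and overlap: if $u,u'\in S$ have $|B_u\cap B_{u'}|\ge n/1200$ then $u\not\sim u'$, for otherwise $B_u\cap B_{u'}$ would be an independent set of size $\geq n/1200$ inside the common neighbourhood of the edge $uu'$. Thus in the auxiliary ``far'' graph $\mathcal F$ on $S$, where $u\sim_{\mathcal F}u'$ iff $|B_u\cap B_{u'}|\ge n/1200$, every clique of $\mathcal F$ is an independent set of $G$. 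Now two facts about $\mathcal F$. First, it is dense on average: by double counting, $\sum_{u\ne u'\in S}|B_u\cap B_{u'}|=\sum_{b\in N_v}|N(b)\cap S|(|N(b)\cap S|-1)\ge |N_v|\cdot 0.218n\,(0.218n-1)$, which is of order $n^3$ over only $\le |S|^2=O(n^2)$ pairs, so the typical overlap is of order $n$, far above $n/1200$. Second, $\overline{\mathcal F}$ is $K_4$-free: if $B_{u_1},\dots,B_{u_4}$ had pairwise intersections $<n/1200$ inside $N_v$ (a set of size $n/4$), then writing $d_b=\#\{i:b\in B_{u_i}\}$ we would get $\sum_b\binom{d_b}{2}=\sum_{i<j}|B_{u_i}\cap B_{u_j}|<6\cdot\frac{n}{1200}=\frac{n}{200}$, while convexity (Jensen) gives $\sum_b\binom{d_b}{2}\ge |N_v|\binom{\bar d}{2}$ with $\bar d=\frac1{|N_v|}\sum_i|B_{u_i}|\ge 4\cdot 0.3=1.2$, i.e. $\sum_b\binom{d_b}{2}\ge 0.12\,|N_v|=0.03n$ — a contradiction. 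Hence $\alpha(\mathcal F)\le 3$.

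At this point a Ramsey bound, $R(q,4)=O(q^3)$, applied to the graph $\mathcal F$ on $\ge 0.72n$ vertices with independence number $\le 3$ already produces a clique of $\mathcal F$ — an independent set of $G$ — of size $\Omega(n^{1/3})$. The main obstacle is that this is short of the required linear bound $n/1200$, and I expect bridging it to be the delicate part of the proof. I would attempt this by bootstrapping: a clique $u_1,\dots,u_k$ of $\mathcal F$ comes equipped with sets $B_{u_i}\subseteq N_v$ of size $\ge 0.075n$ with pairwise intersections $\ge n/1200$, each living in the triangle-free graph $G[N_v]$, so one can recurse inside an intersection $B_{u_i}\cap B_{u_j}$ (again triangle-free with no large independent set, or else done); alternatively, and more promisingly, one uses that the entire structure above is available for \emph{every} base vertex $v$ at once, so a global averaging over $v$ should upgrade the ``$\alpha(\mathcal F)\le 3$ and dense'' conclusion to a linear independent set. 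It is here that the precise constants $n/4$, $[0.3,0.34]$, $0.03n$ and $n/1200$ are presumably tuned so that the final count closes, and the associated bookkeeping is what I anticipate to be the hardest step.
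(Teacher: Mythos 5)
Your preparatory steps are correct and the auxiliary-graph idea is genuinely interesting: the observations that every trisector of $v$ is a non-neighbour of $v$, that every $b\in N_v$ has about $0.218n$ neighbours among the trisectors of $v$, and that any four sets $B_{u_1},\dots,B_{u_4}$ of density $\geq 0.3$ in $N_v$ must contain a pair with intersection $\geq n/1200$ (your Jensen computation) are all sound. But the proof has a genuine gap exactly where you flag it: from ``$\mathcal F$ has $\geq 0.72n$ vertices and independence number at most $3$'' one cannot extract a clique of $\mathcal F$ of linear size. Ramsey lower bounds show that graphs with independence number $3$ can have clique number polynomially sublinear in $n$, so the conclusion you need does not follow from $\alpha(\mathcal F)\leq 3$ plus average density; some further structural input is indispensable, and neither the ``bootstrapping inside $B_{u_i}\cap B_{u_j}$'' nor the ``global averaging over $v$'' sketch identifies what that input is or how it closes the count. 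As written, the argument proves only an $\Omega(n^{1/3})$ independent set, which is not the statement of the lemma.

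For comparison, the paper's proof avoids any auxiliary graph or Ramsey step. It first finds a triangle $abc$ (minimum degree $n/4$ plus small independence number forces one), passes to the ``private'' neighbourhoods $N_a^*=N_a\setminus(N_b\cup N_c)$ etc., and selects $v\in N_a^*$ trisecting $b$ and $c$, then a non-neighbour $u\in N_a^*$ of $v$ trisecting $v$, $b$, and $c$. The punchline is a direct contradiction with $|N_u\cap N_v|\leq 0.34\,|N_v|$: one shows $N_u$ must meet each of $N_v\cap N_b^*$ and $N_v\cap N_c^*$ in more than $0.17\,|N_v|$ vertices, by locating an edge $xy$ in $N_b^*\setminus N_v$ adjacent to $u$ with both endpoints trisecting $v$ and then running a codegree count via Lemma~\ref{lem:edge-codegree}. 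If you want to salvage your route, you would need to replace the Ramsey step by an argument that uses the sets $B_u$ themselves (not just the pairwise-intersection pattern), which is essentially what the paper's hands-on selection of $v$, $u$, $x$, $y$ accomplishes.
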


\begin{proof} Assume for the sake of contradiction that $G$ is $K_4$-free and the
  maximum independent set in $G$ has size $\alpha < n/1200$. As the minimum degree is at least $n/4$ and $G$ does not contain an independent set of this size, it must contain a triangle.
 Let $abc$ be an arbitrary triangle in the graph.
Define the three disjoint sets
\begin{align*}
  N_a^* &= N_a \setminus (N_b \cup N_c) \,, \\
  N_b^* &= N_b \setminus (N_a \cup N_c) \,, \\
  N_c^* &= N_c \setminus (N_a \cup N_b) \,.
\end{align*}
Let $m = n/4$.  Each of $N_a$, $N_b$, and $N_c$ has size exactly
$m$, and Lemma \ref{lem:edge-codegree} ensures that their pairwise
intersections are at most $\alpha$.  So, each of $N_a^*$, $N_b^*$, and
$N_c^*$ has size at least $m - 2\alpha$.  At most $0.06n$
vertices of $N_a^*$ fail to trisect either $b$ or $c$, so we may choose
$v \in N_a^*$ which trisects both $b$ and $c$.

Since we selected $v \in N_a^*$, it is adjacent to $a$, and therefore Lemma
\ref{lem:edge-codegree} implies that $v$ has at most $\alpha$ neighbors in
$N_a^*$.  By above, there are still at least $m - 3\alpha$ non-neighbors of
$v$ in $N_a^*$, of which at most $0.09n$ fail to trisect any of $v$, $b$, or
$c$.  Therefore, we may now select $u \in N_a^*$ which is non-adjacent to
$v$, and trisects each of $v$, $b$, and $c$.

Let $B = N_v \cap N_b^*$ and $C = N_v \cap N_c^*$.  We will establish two
claims: first, that $N_u$ intersects $B$ in more than $0.17 m$
vertices, and second, that $N_u$ intersects $C$ in more than $0.17 m$
vertices.  This is a contradiction, because $B$ and $C$ are disjoint
subsets of $N_v$, and the condition that $u$ trisects $v$ forces $|N_u \cap
N_v| \leq 0.34 m$.  By symmetry between $b$ and $c$, it suffices to prove only the first claim.

For this, suppose for the sake of contradiction that $N_u$ intersects $B$
in at most $0.17 m$ vertices.  Since $u$ trisects $b$, $u$ has at least
$0.3 m$ neighbors in $N_b$, hence at least $0.3 m - 2\alpha$ neighbors in
$N_b^*$, hence at least $0.13 m - 2\alpha > 0.03n + \alpha$ neighbors in
$N_b^* \setminus N_v$.  (Here, we used $\alpha < \frac{n}{1200}$.)  Of
these, at most $0.03 n$ fail to trisect $v$, and since the resulting number
is more than $\alpha$, there is an edge $xy$ such that $x, y \in N_b^*
\setminus N_v$, they both trisect $v$, and they both are adjacent to $u$.

Since $x$ and $y$ are adjacent, $N_x$ and $N_y$ overlap in at most $\alpha$
vertices by Lemma \ref{lem:edge-codegree}.  Since they both trisect $v$, we
conclude that $(N_x \cup N_y) \cap N_v$ has size at least $0.6 m -
\alpha$.  Also by Lemma \ref{lem:edge-codegree}, all but at most $2 \alpha$
of these vertices lie outside $N_b^*$, because $x$ and $y$ are adjacent to
$b$.  Thus, we have already identified at least $0.6 m - 3\alpha$
vertices of $N_v \setminus B$ that are adjacent to $x$ or $y$.  Yet $u$ is
adjacent to both $x$ and $y$, so by Lemma \ref{lem:edge-codegree}, $N_u$
can only include up to $2\alpha$ of these vertices.  Hence
\begin{align*}
  |N_u \cap (N_v \setminus B)|
  &\leq
  |N_v \setminus B| - (0.6 m - 3\alpha) + 2\alpha \\
  &=
  (m - |B|) - (0.6 m - 3\alpha) + 2\alpha \,.
\end{align*}
Since $v$ trisects $b$, we must have $B = N_v \cap N_b^*$ of size at least
$0.3 m - 2\alpha$.  Thus,
\begin{align*}
  |N_u \cap (N_v \setminus B)|
  &\leq
  (0.7 m + 2\alpha) - (0.6 m - 3\alpha) + 2\alpha \\
  &=
  0.1 m + 7\alpha \,.
\end{align*}
Since $u$ trisects $v$, we must have $|N_u \cap N_v| \geq 0.3 m$.
Therefore, $|N_u \cap B| \geq 0.2 m - 7\alpha$, which exceeds $0.17
m$ because $\alpha < n/1200$.  This establishes the claim, and
completes the proof of this lemma.
\end{proof}

The next lemma is a simple averaging argument which will be useful in the
lemma that follows.

\begin{lemma}
  Let $a_1, \ldots, a_m$ be a sequence of real numbers from $[0, 1]$ whose
  average exceeds $1/3$.  Suppose that at most 0.1\% of them
  exceed $0.3334$.  Then at most 3\% of them lie outside the range
  $[0.3, 0.34]$.
  \label{lem:max-bounds-min}
\end{lemma}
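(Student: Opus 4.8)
The plan is to prove this by a weighted averaging argument with careful bookkeeping of constants. Write $k_1$ for the number of indices $i$ with $a_i < 0.3$ and $k_2$ for the number with $a_i > 0.34$. Then the number of $a_i$ lying outside $[0.3,0.34]$ is exactly $k_1+k_2$, so it suffices to show $k_1+k_2 \le 0.03m$.

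The bound on $k_2$ is immediate: every $a_i$ with $a_i > 0.34$ in particular satisfies $a_i > 0.3334$, and by hypothesis at most $0.001m$ indices have this property, so $k_2 \le 0.001m$.

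For $k_1$, I would exploit the hypothesis $\sum_i a_i > m/3$ by bounding the sum from above as sharply as the data permit. Each of the $k_1$ terms below $0.3$ contributes at most $0.3$; at most $0.001m$ terms exceed $0.3334$, and each contributes at most $1$; every one of the remaining terms contributes at most $0.3334$. Since converting a ``middle'' term into a ``large'' term only increases this upper bound, the extreme case has exactly $0.001m$ large terms, giving
\[
  \frac{m}{3} < \sum_i a_i \le 0.3\,k_1 + 0.3334\,(m - k_1) + (1 - 0.3334)\cdot 0.001\,m .
\]
This is a linear inequality in $k_1$, and solving it yields $k_1 < 0.022\,m$ (with a little room to spare). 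Combining with the bound on $k_2$, we get $k_1 + k_2 < 0.022m + 0.001m < 0.03m$, which is exactly the claim.

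There is no genuine obstacle here — the whole argument is essentially a one-line averaging computation — so the only thing to watch is the bookkeeping: the gap between the $0.022m$ produced above and the $0.029m$ one is allowed for $k_1$ is comfortable but not huge, so one must use the full strength of bounding the bulk of the terms by $0.3334$ (rather than by $1/3$ plus something bigger) and must separately account, with the true bound $1$, for the at most $0.001m$ exceptional terms above $0.3334$. A slightly cruder version of the same estimate already suffices.
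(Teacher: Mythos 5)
Your proof is correct and is essentially the same averaging argument as the paper's: both split the terms into those below $0.3$, the at most $0.1\%$ above $0.3334$ (bounded by $1$), and the rest (bounded by $0.3334$), and compare with the average exceeding $1/3$. The only cosmetic difference is that you solve the linear inequality to get $k_1 < 0.022m$, whereas the paper plugs in the threshold $2.9\%$ directly and derives a contradiction.
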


\begin{proof} On the contrary, if at least 2.9\% of them fall
below 0.3, then the average of the sequence is at most
\begin{displaymath}
  0.029 \cdot 0.3 + 0.97 \cdot 0.3334 + 0.001 \cdot 1
  =
  0.333098
  <
  \frac{1}{3} \,,
\end{displaymath}
because the maximum value is at most 1.
\end{proof}

Using the preceding two lemmas, we deduce the next lemma, which shows that if the independence number is small in a $K_4$-free graph, then there is a vertex $v$ such that a substantial fraction of the vertices in $R_v$ have density substantially larger than $1/3$ to $N_v$.

\begin{lemma} Suppose that $\alpha < n/1200$.
  In every $K_4$-free graph on $n$ vertices with independence number
  at most $\alpha$ and minimum degree greater than $\frac{n}{4} +
  \alpha$, there exists a vertex $v$ for which over $0.1\%$ of the
  vertices of $R_v$ have density greater than $0.3334$ to $N_v$.
  \label{lem:Rv-uneven}
\end{lemma}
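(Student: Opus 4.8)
The plan is to argue by contradiction, reducing to Lemma~\ref{lem:trisect}. Suppose instead that for \emph{every} vertex $v$, at most $0.1\%$ of the vertices of $R_v$ have density greater than $0.3334$ to $N_v$. I will deduce that then, for every vertex $v$, all but at most $0.03n$ vertices of $R_v$ trisect $v$. Since the minimum degree exceeds $\frac{n}{4}+\alpha \geq \frac{n}{4}$, Lemma~\ref{lem:trisect} would then produce either a $K_4$ or an independent set of size at least $n/1200 > \alpha$, contradicting the hypotheses that $G$ is $K_4$-free with independence number at most $\alpha < n/1200$.

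The crux is to show that the density from $R_v$ to $N_v$ is, on average over $u \in R_v$, strictly greater than $1/3$. Granting this, Lemma~\ref{lem:max-bounds-min} applied to the sequence $\big(\,|N(u)\cap N_v|/|N_v|\,\big)_{u\in R_v}$ (which lies in $[0,1]$, has average exceeding $1/3$, and by the contradiction hypothesis has at most $0.1\%$ of its terms exceeding $0.3334$) gives that at most $3\%$ of the vertices of $R_v$ lie outside $[0.3,0.34]$, i.e. fail to trisect $v$; since $|R_v| = \tfrac{3n}{4}$ this is at most $0.0225n \le 0.03n$, exactly as Lemma~\ref{lem:trisect} needs. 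To estimate the average I count $e(N_v,R_v)$. As $V(G)$ is partitioned by $N_v$ and $R_v$, and every $w\in N_v$ has degree greater than $\tfrac{n}{4}+\alpha$ while $|N_v|=\tfrac{n}{4}$,
\[
  e(N_v,R_v) \;=\; \sum_{w\in N_v}\deg(w) \;-\; 2e(N_v) \;>\; \frac{n}{4}\Big(\frac{n}{4}+\alpha\Big) \;-\; 2e(N_v).
\]
Now for each $w\in N_v$ the edge $vw$ has, by Lemma~\ref{lem:edge-codegree}, codegree at most $\alpha$, and since every vertex of $N_v$ is a neighbor of $v$ this gives $|N(w)\cap N_v|\le\alpha$; summing over $w$ yields $2e(N_v)\le |N_v|\alpha = \tfrac{n\alpha}{4}$. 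Substituting, $e(N_v,R_v) > \tfrac{n^2}{16} = \tfrac{1}{3}|N_v||R_v|$, so the average density from $R_v$ to $N_v$ exceeds $1/3$, completing the reduction.

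The main obstacle is exactly this last estimate, and it is tight: the $+\alpha$ slack in the minimum-degree hypothesis is razor-thin and is consumed precisely by the (up to) $\tfrac{n\alpha}{4}$ edges that may live inside $N_v$. Consequently the naive bound $e(N_v)\le |N_v|^2/4$ from Mantel's theorem (valid since $N_v$ spans a triangle-free graph) is far too weak, and one must instead exploit $K_4$-freeness through the edge-codegree lemma to pin the internal degrees of $N_v$ down to $\alpha$. Once the averaging is set up this way, the rest is bookkeeping: combine the density estimate with Lemmas~\ref{lem:max-bounds-min} and~\ref{lem:trisect} to reach the contradiction.
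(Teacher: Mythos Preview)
Your proof is correct and follows essentially the same route as the paper: show that each $w\in N_v$ has at most $\alpha$ neighbors inside $N_v$ (via Lemma~\ref{lem:edge-codegree}) and hence more than $n/4$ neighbors in $R_v$, so the bipartite density between $N_v$ and $R_v$ strictly exceeds $1/3$; then feed this into Lemma~\ref{lem:max-bounds-min} and invoke Lemma~\ref{lem:trisect} for the contradiction. Your aggregated edge count $e(N_v,R_v)=\sum_{w\in N_v}\deg(w)-2e(N_v)$ is just a repackaging of the paper's per-vertex statement.
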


\begin{proof} For every vertex $v$, every $u \in N_v$ has at
most $\alpha$ neighbors in $N_v$ by Lemma \ref{lem:edge-codegree}. Then $u$ has more than $n/4$
neighbors in $R_v$.  In particular, the density of the
bipartite subgraph between $N_v$ and $R_v$ is strictly greater than $1/3$.
Therefore, by Lemma \ref{lem:max-bounds-min}, each vertex $v$ which fails
the property produces a situation where all but 3\% of the vertices of
$R_v$ trisect $v$.  If this occurs for every vertex
$v$, then we satisfy the main condition of Lemma \ref{lem:trisect}. \end{proof}

\begin{lemma}
  For any $0< c \leq 3/4$, the following holds with $C = \frac{9}{8c} + \frac{1}{2}$.  Let $G$ be an $n$-vertex graph with no $K_4$, in which all
  independent sets have size at most $\alpha$, and suppose that $\alpha
  \leq cn/3$.  Let $R$ be a subset of $3n/4$ vertices, and
  let $T \subset R$ have size $cn$.  Suppose that every vertex of $T$ has
  degree (in $R$) at least $\frac{n}{8} + C \alpha$.  Then there is a
  subset $U \subset R$ (not necessarily disjoint from $T$) of size at least
  $n/4$ such that every vertex of $U$ has more than $\alpha$
  neighbors in $T$.
  \label{lem:R-up-down}
\end{lemma}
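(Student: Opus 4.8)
The plan is to count the edges between $T$ and $R$ in two ways. On one hand, every vertex of $T$ has $R$-degree at least $\frac{n}{8} + C\alpha$, so the number of edges from $T$ into $R$ is at least $|T|\bigl(\frac{n}{8} + C\alpha\bigr) = cn\bigl(\frac{n}{8} + C\alpha\bigr)$. On the other hand, let $U$ be the set of vertices of $R$ with more than $\alpha$ neighbors in $T$. I would like to upper-bound the edge count $e(T,R)$ in terms of $|U|$. Vertices in $U$ contribute at most $|T| = cn$ edges each; vertices in $R \setminus U$ contribute at most $\alpha$ edges each. The key additional input is that $U$ cannot be too large relative to the $K_4$-freeness: in fact, using Lemma \ref{lem:edge-codegree} together with the fact that independent sets have size at most $\alpha$, the subgraph induced on $T$ is triangle-free (any triangle in $T$ would be a triangle among common neighbors of... — more carefully, if $x,y \in T$ are adjacent then by Lemma \ref{lem:edge-codegree} they have at most $\alpha$ common neighbors, so edges within $T$ have low codegree). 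The real point I want is to bound the total number of edges $e(T,R)$ from above by combining (i) the trivial per-vertex bounds and (ii) a Kővári–Sós–Turán / Mantel-type restriction forcing $|U|$ to be at least $n/4$ once $C$ is chosen large enough.

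Concretely, I would argue by contradiction: suppose $|U| < n/4$. Then
\[
  e(T,R) \;\leq\; |U|\cdot cn \;+\; \bigl(|R| - |U|\bigr)\cdot \alpha
  \;<\; \frac{n}{4}\cdot cn \;+\; \frac{3n}{4}\cdot \alpha
  \;=\; \frac{cn^2}{4} + \frac{3n\alpha}{4}.
\]
Comparing with the lower bound $e(T,R) \geq cn\bigl(\frac{n}{8} + C\alpha\bigr) = \frac{cn^2}{8} + C c n\alpha$, we would need
\[
  \frac{cn^2}{8} + Ccn\alpha \;<\; \frac{cn^2}{4} + \frac{3n\alpha}{4},
\]
i.e. $Ccn\alpha - \frac{3n\alpha}{4} < \frac{cn^2}{8}$, i.e. $\bigl(Cc - \tfrac34\bigr)\alpha < \frac{cn}{8}$. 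With $C = \frac{9}{8c} + \frac12$ we have $Cc - \frac34 = \frac{9}{8} + \frac{c}{2} - \frac34 = \frac38 + \frac{c}{2} > \frac38$, so this forces $\alpha < \frac{cn}{8} \big/ \bigl(\frac38 + \frac{c}{2}\bigr) < \frac{cn}{3}$... which is consistent with the hypothesis rather than contradicting it. So the naive two-way count is not quite enough, and I will need to squeeze more out of the structure.

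The fix — and the step I expect to be the main obstacle — is to exploit $K_4$-freeness more cleverly on the $U$ side rather than using the trivial bound "$|U|$ vertices contribute $cn$ edges each." Since $G$ is $K_4$-free with independence number $\le \alpha$, apply Lemma \ref{lem:edge-codegree} inside $T$: the graph $G[T]$ is such that adjacent vertices have codegree $\le \alpha$, and since $G[T]$ has no independent set of size $>\alpha$ and $|T| = cn$, Ramsey/Mantel-type considerations (or directly Lemma \ref{lem:pair-density-1/2} applied with suitable parameters) bound the density within $T$, but more importantly bound how many vertices outside $T$ can have large neighborhoods into $T$. I would instead consider the set $A$ of vertices in $T$ with $T$-degree exceeding $\alpha$: by Lemma \ref{lem:edge-codegree} $A$ is independent, so $|A| \le \alpha$, which means most of $T$ has small $T$-degree and hence large degree into $R \setminus T$; this is enough to set up an averaging argument producing a vertex $w$ of $T$ many of whose neighbors in $R$ themselves have many neighbors back in $T$ — and I would then track these carefully, using the hypothesis $\alpha \le cn/3$ at each step to absorb error terms, to conclude $|U| \ge n/4$. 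The delicate bookkeeping of constants so that the chosen value $C = \frac{9}{8c} + \frac12$ works out exactly is where the real work lies; everything else is a direct double-count.
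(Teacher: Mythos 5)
Your opening double count is set up correctly, and you are right that it fails by a factor of~$2$ in the main term: counting $T$ vertex-by-vertex gives a lower bound of order $\frac{cn^2}{8}$ against an upper bound of order $\frac{cn^2}{4}$ when $|U|<\frac{n}{4}$, and no choice of $C$ can repair that. But the fix you sketch does not close this gap, and one of its steps is wrong: the set of vertices of $T$ with $T$-degree exceeding $\alpha$ is \emph{not} independent by Lemma~\ref{lem:edge-codegree} --- that lemma bounds the codegree of \emph{adjacent} pairs, and having two vertices each with more than $\alpha$ neighbors in $T$ says nothing about their common neighborhood. (The argument of that flavor elsewhere in the paper applies to vertices whose degree into a set exceeds \emph{half} of that set, so that two adjacent such vertices are forced to have codegree more than $\alpha$; degree more than $\alpha$ is far too weak for this.) The remainder of your plan (``an averaging argument producing a vertex $w$ \dots and I would then track these carefully'') is a statement of intent rather than a proof, and it is precisely at this point that the factor of $2$ must be recovered.

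The idea you are missing is to double-count over \emph{edges} of $G[T]$ rather than over vertices of $T$. Since $G[T]$ has independence number at most $\alpha\le \frac{cn}{3}$, one can greedily extract a matching $M$ of $\frac{cn}{3}$ edges in $G[T]$. For a matching edge $xy$, the endpoints are adjacent, so Lemma~\ref{lem:edge-codegree} \emph{does} apply and gives $|N(x)\cap N(y)|\le\alpha$; hence the union of their $R$-neighborhoods has size at least $2\left(\frac{n}{8}+C\alpha\right)-\alpha=\frac{n}{4}+(2C-1)\alpha$. Now form the bipartite incidence graph between $M$ and $R$ (an edge when a vertex of $R$ is adjacent to at least one endpoint of the matching edge) and run your double count there: the degree-sum lower bound is $\frac{cn}{3}\left(\frac{n}{4}+(2C-1)\alpha\right)$, the upper bound is $|R\setminus U|\,\alpha+|U|\cdot\frac{cn}{3}$ where $U$ is the set of $R$-vertices meeting more than $\alpha$ matching edges (and hence, by disjointness of the matching edges, having more than $\alpha$ neighbors in $T$). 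The main terms $\frac{cn^2}{12}$ now cancel exactly, and solving for $|U|$ gives $|U|\ge\frac{n}{4}+(2C-1)\alpha-\frac{9}{4c}\alpha=\frac{n}{4}$ with $C=\frac{9}{8c}+\frac12$. So the constant is not delicate bookkeeping on top of your count; it only emerges once the matching trick has restored the lost factor of $2$.
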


\begin{proof} Greedily pull out a matching from $G[T]$ of
$\frac{cn}{3}$ edges.  This is possible because $G[T]$ has independence
number at most $\alpha \leq \frac{cn}{3}$.  Create an auxiliary bipartite
graph $H$ with two sides $A$ and $B$ as follows.  Set $|A| = \frac{cn}{3}$,
with one vertex for each of the matching edges.  Let $B$ be a copy of $R$.
Place an edge between $a \in A$ and $b \in B = R$ whenever the vertex $b
\in R$ is adjacent to at least one of the endpoints of the matching edge
corresponding to $a$.  Since every vertex in $T$ has degree in $R$ at least $\frac{n}{8} +
C \alpha$, and no independent set of size larger than $\alpha$, Lemma
\ref{lem:edge-codegree} implies that in the auxiliary bipartite graph $H$,
every vertex of $A$ has degree at least $\frac{n}{4} + (2C-1)\alpha$.

Let $U$ contain all vertices of $B$ that have degree (in $H$) greater
than $\alpha$. Since the sum of all degrees of $B$ equals the sum of all
degrees of $A$, this sum is at least $\big( \frac{cn}{3} \big) \big(
\frac{n}{4} + (2C-1)\alpha \big)$.  At the same time, it
is also at most $|R \setminus U| \alpha + |U| \big( \frac{cn}{3} \big)$.  Putting
these together, we find that
\[
  (|R| - |U|) \alpha + |U| \left( \frac{cn}{3} \right)
  \geq
  \left( \frac{cn}{3} \right) \left( \frac{n}{4} + (2C-1)\alpha
  \right)
\]
so that
\begin{align*}
|U| &\geq
  \frac{
  \left( \frac{cn}{3} \right) \left( \frac{n}{4} + (2C-1)\alpha \right)
  - \left( \frac{3n}{4} \right) \alpha }
  { \frac{cn}{3} - \alpha } \\
  &>
  \frac{
  \left( \frac{cn}{3} \right) \left( \frac{n}{4} + (2C-1)\alpha \right)
  - \left( \frac{3n}{4} \right) \alpha }
  { \frac{cn}{3} } \\
  &= \frac{n}{4} + (2C-1)\alpha - \frac{9}{4c} \alpha
  \\
  & = \frac{n}{4}.
\end{align*}
Finally, from the definition of $U$, it follows that (in $G$) every vertex in $U$ has at least $\alpha$ neighbors in $T$.
\end{proof}

\begin{lemma}
  In a graph, let $L$ be a subset of vertices, and let $xyz$ be a
  triangle.  (The vertices $x$, $y$, and $z$ each may or may not lie
  in $L$.) Suppose that the $L$-degrees of $x$, $y$, and $z$ sum up to
  more than $|L| + 3\alpha$.  Then the graph contains a $K_4$ or an
  independent set of size greater than $\alpha$.
  \label{lem:L-triangle}
\end{lemma}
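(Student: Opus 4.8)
The plan is to argue by contradiction, using nothing beyond Lemma~\ref{lem:edge-codegree}. So I would assume that $G$ contains no $K_4$ and has independence number at most $\alpha$, and then contradict the hypothesis that the $L$-degrees of $x$, $y$, $z$ sum to more than $|L|+3\alpha$. Under exactly the standing assumptions, Lemma~\ref{lem:edge-codegree} says that the two endpoints of any edge of $G$ have at most $\alpha$ common neighbors, and the whole proof is just to locate one such edge among $x$, $y$, $z$ whose endpoints share too many neighbors inside $L$.

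The first step is to restrict attention to neighborhoods inside $L$. Write $N_L(v)$ for the set of neighbors of $v$ lying in $L$, so $d_L(v)=|N_L(v)|$ and all three sets $N_L(x),N_L(y),N_L(z)$ are subsets of $L$. Since their union has size at most $|L|$, inclusion--exclusion (discarding the non-negative triple-intersection term) yields
\[
  |N_L(x)\cap N_L(y)| + |N_L(y)\cap N_L(z)| + |N_L(z)\cap N_L(x)|
  \;\ge\; d_L(x)+d_L(y)+d_L(z) - |L| \;>\; 3\alpha,
\]
where the final inequality is the hypothesis of the lemma. Hence at least one of the three pairwise intersections has size greater than $\alpha$; by the symmetry among $x,y,z$ I may assume it is $|N_L(x)\cap N_L(y)|>\alpha$.

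Now the punchline: because $xyz$ is a triangle, $xy$ is an edge of $G$, and every vertex of $N_L(x)\cap N_L(y)$ is a common neighbor of $x$ and $y$. Thus $x$ and $y$ have more than $\alpha$ common neighbors, directly contradicting Lemma~\ref{lem:edge-codegree} under the assumption that $G$ is $K_4$-free with independence number at most $\alpha$. This contradiction forces $G$ to contain a $K_4$ or an independent set of size greater than $\alpha$. I do not expect any real obstacle here --- the statement is a one-line averaging argument layered on Lemma~\ref{lem:edge-codegree}. The only points needing care are bookkeeping ones: the degree-sum hypothesis must be strong enough to beat $3\alpha$ rather than $\alpha$, since we are splitting the surplus among three pairwise intersections; and the edge $xy$ exists precisely because $xyz$ was assumed to be a triangle, so that no pair among $x,y,z$ is a non-edge.
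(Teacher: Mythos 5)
Your proposal is correct and is essentially the paper's own argument: the same inclusion--exclusion bound shows some pair among $x,y,z$ has more than $\alpha$ common neighbors in $L$, and since that pair is an edge of the triangle, Lemma~\ref{lem:edge-codegree} (whose proof is exactly the ``independent set or edge spanning a $K_4$'' dichotomy the paper writes out inline) finishes it. The only cosmetic difference is that you phrase the conclusion as a contradiction via the earlier lemma rather than exhibiting the $K_4$ or independent set directly.
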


\begin{proof} Let $X$, $Y$, and $Z$ be the neighborhoods of
$x$, $y$, and $z$ within $L$.  By inclusion-exclusion,
\begin{align*}
  |L|
  \geq
  |X \cup Y \cup Z|
  &\geq
  (|X| + |Y| + |Z|) - |X \cap Y| - |Y \cap Z| - |Z \cap X| \\
  &>
  (|L| + 3\alpha) - |X \cap Y| - |Y \cap Z| - |Z \cap X| \,.
\end{align*}
Thus at least one of the pairwise intersections between $X$, $Y$, and $Z$
exceeds $\alpha$; without loss of generality, suppose it is between the
$L$-neighborhoods of $x$ and $y$.  If this intersection is an independent
set, then we have found an independent set of size greater than $\alpha$.
Otherwise, it spans an edge $uv$, and $xyuv$ forms a copy of $K_4$.
\end{proof}

\begin{corollary}
  In a $K_4$-graph with independence number at most $\alpha$, let
  $L$ and $X$ be disjoint subsets of vertices.  Suppose that every
  vertex of $X$ has $L$-degree greater than $\frac{|L|}{3} + \alpha.$
  Then the induced subgraph on $X$ has maximum degree at most
  $\alpha$.
  \label{cor:1/3-sparse}
\end{corollary}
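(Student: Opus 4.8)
The plan is to argue by contradiction, deriving a triangle inside $X$ whose three vertices all have large $L$-degree, and then invoking Lemma \ref{lem:L-triangle}. So suppose the induced subgraph $G[X]$ has some vertex $x$ of degree greater than $\alpha$ within $X$. Let $W \subseteq X$ be the set of neighbors of $x$ inside $X$, so $|W| > \alpha$. Since $G$ has independence number at most $\alpha$, the set $W$ cannot be independent, hence it spans an edge $yz$. Then $x$, $y$, $z$ form a triangle, and all three of these vertices lie in $X$.

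Next I would observe that, by hypothesis, each of $x$, $y$, $z$ has $L$-degree greater than $\frac{|L|}{3} + \alpha$, so the sum of their $L$-degrees exceeds $|L| + 3\alpha$. Since $X$ and $L$ are disjoint, the triangle $xyz$ and the set $L$ are in exactly the configuration required by Lemma \ref{lem:L-triangle} (which permits the triangle vertices to lie outside $L$). Applying that lemma yields either a copy of $K_4$ or an independent set of size greater than $\alpha$, contradicting the assumptions that $G$ is $K_4$-free and has independence number at most $\alpha$. Therefore no such vertex $x$ exists, i.e., every vertex of $X$ has at most $\alpha$ neighbors inside $X$, which is exactly the claim.

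There is essentially no serious obstacle here: the statement is a direct consequence of Lemma \ref{lem:L-triangle} once one extracts the triangle from a high-degree vertex in $G[X]$ using the independence-number bound. The only point requiring a moment's care is checking that the hypotheses of Lemma \ref{lem:L-triangle} are met — namely the strict inequality $\deg_L(x) + \deg_L(y) + \deg_L(z) > |L| + 3\alpha$, which follows because each term is strictly larger than $\frac{|L|}{3} + \alpha$ — and noting that disjointness of $X$ and $L$ makes the triangle vertices' $L$-degrees at most $|L|$ without any overlap subtleties.
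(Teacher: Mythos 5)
Your proposal is correct and follows the same argument as the paper: extract an edge from the over-large neighborhood of a high-degree vertex in $G[X]$ to form a triangle whose $L$-degree sum exceeds $|L| + 3\alpha$, then apply Lemma \ref{lem:L-triangle}. No issues.
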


\begin{proof} Suppose a vertex $x \in X$ has more than $\alpha$
neighbors in $X$.  This neighborhood cannot be an independent set, so it
spans an edge $yz$.  Now $xyz$ is a triangle whose vertices have $L$-degree
sum greater than $|L| + 3\alpha$, and Lemma \ref{lem:L-triangle} completes
the proof.  \end{proof}

Our next lemma establishes a major milestone toward constructing a cut
which contains almost all of the edges.  Such a bipartition spans few edges
within each part, and the following lemma achieves this for one part.

\begin{lemma}
  For any $0 < c < \frac{1}{2}$, the following holds with $C =
  \frac{9 \cdot 10^5}{8c} + 1$. In a $K_4$-free graph on $n$ vertices with minimum degree
  $\frac{n}{4} + C \alpha$ and independence number at most $\alpha
  \leq 10^{-5} \cdot \frac{cn}{3}$, there must exist a subset $X$ of $\left( \frac{1}{2} - c \right)n$
  vertices for which its induced subgraph has maximum degree at most
  $\alpha$.
  \label{lem:big-sparse}
\end{lemma}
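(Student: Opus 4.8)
The plan is to produce, via Corollary~\ref{cor:1/3-sparse}, a pair of \emph{disjoint} vertex sets $L$ and $X$ with $|X| = \left(\frac12 - c\right)n$ such that every vertex of $X$ has more than $\frac{|L|}{3} + \alpha$ neighbors in $L$; Corollary~\ref{cor:1/3-sparse} then immediately gives that $G[X]$ has maximum degree at most $\alpha$, which is what we want. I will construct such a pair by starting from a small seed and repeatedly enlarging it.

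For the seed, apply Lemma~\ref{lem:Rv-uneven}: its hypotheses hold since the minimum degree exceeds $\frac n4 + \alpha$ (as $C>1$) and $\alpha \le 10^{-5}\cdot\frac{cn}{3} < \frac{n}{1200}$ (since $c<\frac12$). So there is a vertex $v$ for which more than $0.1\%$ of the vertices of $R_v$ have density greater than $0.3334$ to $N_v$. Put $L := N_v$, of size $\frac n4$, and let $X$ be the set of all $w \notin L$ with more than $\frac{|L|}{3} + \alpha$ neighbors in $L$; by construction $X$ is the largest ``$X$-side'' compatible with this $L$. Since $\alpha$ is more than a factor of ten smaller than $\left(0.3334 - \frac13\right)\cdot\frac n4$, every vertex counted by Lemma~\ref{lem:Rv-uneven} has more than $\frac{|L|}{3}+\alpha$ neighbors in $L$, so $|X| > 0.0007\,n > 0$, and Corollary~\ref{cor:1/3-sparse} shows $(L,X)$ already has the required sparseness.

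It remains to enlarge the pair until $|X| = \left(\frac12-c\right)n$. Suppose $(L,X)$ is a disjoint pair with the neighbor-density property and $|X| < \left(\frac12-c\right)n$. Write $M := V \setminus (L\cup X)$; every vertex of $M$ has at most $\frac{|L|}{3}+\alpha$ neighbors in $L$, for otherwise it could be moved into $X$. Hence, by the strong minimum-degree hypothesis, each vertex of $M$ has at least $\frac n4 + C\alpha - \frac{|L|}{3} - \alpha$ neighbors in $V\setminus L$, which when $|L| = \frac n4$ is at least $\frac n6 + (C-1)\alpha$, comfortably above $\frac n8$. This is precisely the hypothesis needed to invoke Lemma~\ref{lem:R-up-down} with $R := V\setminus L$ (of size $\frac{3n}{4}$) and with $T$ a subset of $M$ of size $\frac{c}{10^5}\cdot n$: the constant in that lemma is $\frac{9}{8}\cdot\frac{10^5}{c} + \frac12$, which is exactly why our $C = \frac{9\cdot10^5}{8c}+1$ and our cap $\alpha \le 10^{-5}\cdot\frac{cn}{3}$ are what they are (so that $\alpha \le \frac{|T|}{3}$ and the degree condition is met). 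Lemma~\ref{lem:R-up-down} produces a set $U$ of at least $\frac n4$ vertices, each with more than $\alpha$ neighbors in $T \subseteq M$. For any $u\in U$, those neighbors cannot be independent, so they span an edge $x_1x_2$ with $u$ adjacent to both; in the triangle $ux_1x_2$ the vertices $x_1,x_2\in M$ have at most $\frac{|L|}{3}+\alpha$ neighbors in $L$ each, so if $u$ also had more than $\frac{|L|}{3}+\alpha$ neighbors in $L$, the three $L$-degrees would sum to more than $|L|+3\alpha$ and Lemma~\ref{lem:L-triangle} would yield a $K_4$ or an independent set larger than $\alpha$, contradicting our assumptions. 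Therefore every vertex of $U$ has at most $\frac{|L|}{3}+\alpha$ neighbors in $L$, i.e.\ $U\subseteq M$. Using this dichotomy — $U$ is a large set of vertices with low $L$-density but high total degree, hence many neighbors outside $L$ — we promote a suitable chunk of $U\cup M$ into $L$, move the vertices that thereby become high-density to the enlarged $L$ into $X$, and thus strictly decrease $|M|$. Since $|M|\ge 0$ this terminates, and termination can only occur with $|X|\ge\left(\frac12-c\right)n$, finishing the proof.

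The main obstacle is this amplification step, where two things must be handled carefully. First, one must maintain the minimum-degree-into-$V\setminus L$ bound as $L$ grows beyond $\frac n4$: this is exactly why $C$ is taken so large and $\alpha$ so small, so that the middle vertices keep roughly $\frac n8$ neighbors outside $L$ and a suitably rescaled form of Lemma~\ref{lem:R-up-down} continues to apply at each stage. Second, one must pin down the right monotone potential (essentially $|M|$ together with control on $|L|$) and the right rule for which vertices of $U\cup M$ to promote, so that the process makes genuine progress and halts precisely at the target size $\left(\frac12-c\right)n$ rather than stalling earlier; here the slack in the strict inequalities and the freedom in the parameter $c$ are what is exploited.
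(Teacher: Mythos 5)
There is a genuine gap: the entire difficulty of the lemma is getting from a sparse-to-$L$ set of size $0.0007n$ up to one of size $\left(\frac12-c\right)n$, and your ``amplification step'' is not actually carried out --- you explicitly leave open both the rule for which vertices to promote into $L$ and the termination argument, and these are not routine. Indeed the scheme as sketched has structural problems: decreasing $|M|=n-|L|-|X|$ while $|L|$ grows does not force $|X|$ to grow, and as $|L|$ grows the threshold $\frac{|L|}{3}+\alpha$ grows with it, so vertices can fall \emph{out} of $X$; nothing in the sketch rules out the process ending with $M=\emptyset$, $L$ huge, and $X$ still tiny. Moreover, the one concrete deduction you do write down --- that $U\subseteq M$ --- misapplies Lemma~\ref{lem:L-triangle}: knowing that $x_1,x_2\in M$ have $L$-degree \emph{at most} $\frac{|L|}{3}+\alpha$ and that $u$ has $L$-degree \emph{more than} $\frac{|L|}{3}+\alpha$ gives only a lower bound of $\frac{|L|}{3}+\alpha$ on the degree sum of the triangle $ux_1x_2$, not a lower bound exceeding $|L|+3\alpha$; so no contradiction arises and $U$ need not lie in $M$.

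The paper's proof is a one-shot double count with $L=N_v$ fixed, and the roles of $T$ and $U$ are essentially the reverse of what you set up. Take $T$ to be the $c_1n$ vertices of $R=R_v$ of \emph{highest} $L$-degree (after removing the set $A$ of $L$-degree above $\frac{|L|+\alpha}{2}$, which has size at most $\alpha$); since $|T\cup A|$ is smaller than the $0.1\%$ of $R$ guaranteed by Lemma~\ref{lem:Rv-uneven}, every vertex of $T$ has $L$-degree $b>0.3334|L|$. Lemma~\ref{lem:R-up-down} gives $U\subset R$ of size exactly $|R|/3=n/4$ whose vertices each have more than $\alpha$ neighbors in $T$; taking $x\in U$ of maximal $L$-degree $a$ and an edge $yz$ inside its $T$-neighborhood, Lemma~\ref{lem:L-triangle} applied to the triangle $xyz$ forces $a+2b\le|L|+3\alpha$, i.e.\ every vertex of $U$ has $L$-degree at most roughly $|L|/3$ (here the triangle lemma is used with two \emph{high}-degree vertices to cap the third). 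Since $e(L,R)>|L||R|/3$ and $|U|=|R|/3$, comparing this lower bound with the upper bound $|U|a+(|R|-|U|)b+|T\cup A|(|L|-b)$ leaves slack only $O(c_1n|L|)$, which bounds the set $S$ of remaining vertices of $L$-degree at most $\frac{|L|}{3}+\alpha$ by $0.39cn$; then $X=R\setminus(U\cup S\cup A)$ has size at least $\left(\frac12-c\right)n$ and Corollary~\ref{cor:1/3-sparse} finishes. No iteration is needed, and the minimum degree is only ever used with $|L|=n/4$.
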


\begin{proof} Use Lemma \ref{lem:Rv-uneven} to select a vertex
$v$ for which over 0.1\% of the vertices of $R_v$ have density greater than
0.3334 to $N_v$.  Let $L = N_v$ and let $R = R_v$.  Let $c_1 = 10^{-5}
c$, so that $C - \frac{1}{2}$ is the constant obtained from Lemma \ref{lem:R-up-down} with
parameter $c_1$.

Each vertex $u \in L$ has degree at least $\frac{n}{4} + C \alpha$ by
assumption, but by Lemma \ref{lem:edge-codegree}, since $u$ is adjacent to
$v$, at most $\alpha$ of this degree can go back to $L$.  Therefore, every
vertex of $L$ has more than $|R|/3$ neighbors in $R$, which implies that
the total number of edges between $L$ and $R$ exceeds $|L| |R|/3$.

Let $A \subset R$ be the vertices of $R$ whose $L$-degree exceeds
$\frac{|L| + \alpha}{2}$.  If $|A| > \alpha$, then $A$ cannot be an
independent set, so it induces an edge $wx$; each endpoint has $L$-degree
greater than $\frac{|L| + \alpha}{2}$, so $w$ and $x$ have more than
$\alpha$ common neighbors in $L$.  That common neighborhood is too large to
be an independent set, so it must induce an edge $yz$, and $wxyz$ forms a
copy of $K_4$.  Therefore, $|A| \leq \alpha$.

Every vertex of $R \setminus A$ has total degree at least $\frac{n}{4} + C
\alpha$ by assumption, at most $\frac{|L| + \alpha}{2}$ of which goes to
$L$ by construction.  At least $\frac{n}{8} + \big( C - \frac{1}{2}
\big) \alpha$ remains within $R$.  Let $T$ be the $c_1 n$ vertices of $R
\setminus A$ of highest $L$-degree.  We may now apply Lemma
\ref{lem:R-up-down} on $R$ and $T$, and find $U \subset R$ of size exactly
$\frac{n}{4} = \frac{|R|}{3}$, each of whose vertices has more than
$\alpha$ neighbors in $T$.

Let $x$ be the vertex in $U$ with highest $L$-degree, and let its
$L$-degree be $a$.  Let $b$ be the smallest $L$-degree of a vertex in $T$.
Since $|T \cup A| \leq c_1 n + \alpha < 0.1\% |R|$, we must have
\begin{equation}
  b > 0.3334 |L| \,.
  \label{eq:b>1/3}
\end{equation}
by the initial choice of $v$.  Since $x$ has more than $\alpha$ neighbors
in $T$, its neighborhood in $T$ spans an edge $yz$, forming a triangle
$xyz$.  The sum of the $L$-degrees of its vertices is at least $a + 2b$.
If this exceeds $|L| + 3\alpha$, then we are already done by Lemma
\ref{lem:L-triangle}, so we may now assume that
\begin{equation}
  a + 2b \leq |L| + 3\alpha \,.
  \label{eq:a+2b<1}
\end{equation}
To put $\alpha$ in perspective, note that our initial assumption on
$\alpha$ translates into
\begin{equation}
  \alpha
  \leq 10^{-5} \cdot \frac{cn}{3}
  = \frac{c_1 n}{3}
  = \frac{4}{3} \cdot c_1 |L| \,.
  \label{eq:alpha<<L}
\end{equation}

If $U$ and $T$ overlap at all, then we also have $a \geq b$, so inequality
\eqref{eq:b>1/3} then forces both $a, b > 0.3334 |L|$.  Combining this with
inequality \eqref{eq:a+2b<1}, we find that $0.0002 |L| < 3 \alpha$, and since
$|L| = \frac{n}{4}$, we have $\frac{n}{60000} < \alpha$.  This is
impossible, because we assumed that $\alpha < 10^{-5} \cdot \frac{cn}{3}$,
and $c < \frac{1}{2}$.  Therefore, $U$ and $T$ are disjoint, and we may
upper bound the sum of all $L$-degrees from $R$ by
\begin{equation}
  e(L, R)
  \leq
  |U| a
  + (|R| - |U|) b
  + |T \cup A| (|L| - b) \,.
  \label{eq:eLR-upper}
\end{equation}
This is because all vertices of $U$ have $L$-degree at most $a$, and
of the remaining vertices of $R$, only those in $T \cup A$ may have
$L$-degree exceeding $b$; even then, all $L$-degrees are at most $|L|$.
Simplifying this expression with $|U| = |R|/3$, $|T \cup A| \leq c_1 n +
\alpha$, and inequalities \eqref{eq:a+2b<1} and \eqref{eq:alpha<<L}, we
find that the total $L$-degree sum from $R$ is
\begin{align*}
  e(L, R)
  &\leq
  \left( \frac{|R|}{3} \right) a
  + \left( \frac{2 |R|}{3} \right) b
  + (c_1 n + \alpha) (|L| - b) \\
  &<
  \frac{|R|}{3} (a + 2b)
  + (c_1 n + \alpha) |L| \\
  &\leq
  \frac{|R|}{3} (|L| + 3\alpha)
  + (c_1 n + \alpha) |L| \\
  &=
  \frac{|L| |R|}{3} + \alpha (|L| + |R|) + c_1 n |L| \\
  &=
  \frac{|L| |R|}{3} + \alpha (n) + c_1 n |L| \\
  & \leq
  \frac{|L| |R|}{3} + \frac{7}{3} \cdot c_1 n |L| \,.
\end{align*}
Yet one of our first observations was that $e(L, R) > |L| |R| / 3$.
Therefore, the total amount of slack in inequality \eqref{eq:eLR-upper} is
at most $\frac{7}{3} \cdot c_1 n |L|$.

This is a very small gap.  To take advantage of it, let $S$ be the subset
of vertices in $R \setminus (A \cup U)$ whose $L$-degree is at most
$\frac{|L|}{3} + \alpha$, which is less than $b$ by inequalities \eqref{eq:alpha<<L} and
\eqref{eq:b>1/3}. Hence, $S$ is entirely contained in $R \setminus (A \cup T \cup U)$.  We may then sharpen
inequality \eqref{eq:eLR-upper} to
\begin{equation}
  e(L, R)
  \leq
  |U| a
  + (|R| - |U|) b
  + |T \cup A| (|L| - b)
  - |S| \left( b - \frac{|L|}{3} - \alpha \right)
  \,.
  \label{eq:eLR-upper2}
\end{equation}
In particular, the new summand cannot exceed the amount of slack we
previously determined, and so
\[
  |S| \left( b - \frac{|L|}{3} - \alpha \right)
  <
  \frac{7}{3} \cdot c_1 n |L|.
\]
Hence
\[
  |S|
  <
  \frac{7 c_1 n |L|}{3 b - |L| - 3\alpha}
 =\frac{7 c_1 n}{\frac{3 b}{|L|} - 1 - \frac{3\alpha}{|L|}}.
\]
Combining this with inequalities \eqref{eq:b>1/3} and \eqref{eq:alpha<<L},
we conclude that
\[
  |S|
  <
  \frac{7 c_1 n}{ 1.0002 - 1 - 4 c_1 }
  <
  \frac{7 c_1 n}{ 1.0002 - 1 - 2 \cdot 10^{-5} }
  =
  \frac{7 \cdot 10^{-5}}{ 1.0002 - 1 - 2 \cdot 10^{-5} } \cdot cn
  < 0.39 cn \,,
\]
and so if we define $X = R \setminus (U \cup S \cup A)$, the size of $X$ is
at least $\big( \frac{1}{2} - c \big)n$.  Furthermore, every vertex of $X$
has $L$-degree greater than $\frac{|L|}{3} + \alpha$, and so Corollary
\ref{cor:1/3-sparse} implies that the induced subgraph $G[X]$ has all
degrees at most $\alpha$.  \end{proof}

\begin{lemma}
  For any $0 < c^* < \frac{2}{5}$, the following holds with $c =
  c^*/4$.  Let $G$ be a $K_4$-free graph on $n$ vertices with
  independence number at most $\alpha < \frac{c^* n}{50}$ and minimum
  degree at least $\frac{n}{4}$.  Suppose it has a set $X$ of $\big(
  \frac{1}{2} - c \big)n$ vertices, which induces a subgraph of maximum
  degree at most $\alpha$.  Then in the max-cut of $G$, the total number of
  non-crossing edges is at most $c^* n^2$.
  \label{lem:half-sparse=big-cut}
\end{lemma}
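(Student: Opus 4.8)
The plan is to show that the single cut $(X, Y)$ with $Y := V(G)\setminus X$ already has at most $c^* n^2$ non-crossing edges; since the max-cut minimizes the number of non-crossing edges, this suffices. Because $X$ has maximum induced degree at most $\alpha$, the term $e(X) \le \tfrac12\alpha|X|$ is negligible, and the whole task is to bound $e(Y)$, which a priori could be $\Theta(n^2)$.

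To control $e(Y)$, I would partition $Y = Y'\sqcup Z$, where $Y'$ consists of those $y\in Y$ with more than $\tfrac{|X|}{3}+\alpha$ neighbours in $X$, and $Z$ is the rest. Applying Corollary~\ref{cor:1/3-sparse} with $L=X$ shows that the induced subgraph on $Y'$ has maximum degree at most $\alpha$, so $e(Y')\le\tfrac12\alpha|Y'|$ is also negligible; and since every edge of $G[Y]$ not contained in $Y'$ is incident to $Z$, we get $e(Y)\le \tfrac12\alpha|Y'| + |Z|\,|Y|$. So it remains to prove $|Z| = O(cn)$.

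This is the crux, and the one genuinely delicate point. First, since every vertex of $X$ has degree at least $\tfrac n4$ and at most $\alpha$ neighbours inside $X$, we have $e(X,Y)=\sum_{y\in Y}d_X(y)\ge |X|\big(\tfrac n4-\alpha\big)\approx\tfrac{n^2}{8}$, where $d_X(y)$ denotes the number of neighbours of $y$ in $X$. On the other hand, Lemma~\ref{lem:edge-codegree} forces $d_X(y_1)+d_X(y_2)\le |X|+\alpha$ for every edge $y_1y_2$ inside $Y$, because the common neighbourhood of $y_1$ and $y_2$ has size at most $\alpha$; hence the set $Y^{+}$ of vertices of $Y$ with $d_X$-value exceeding $\tfrac{|X|+\alpha}{2}$ is an independent set in $G$, so $|Y^{+}|\le\alpha$. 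Writing $Y = Z\sqcup Y^{0}\sqcup Y^{+}$ (with $Y^0$ the "medium" vertices) and bounding $d_X(y)$ by $\tfrac{|X|}{3}+\alpha$ on $Z$, by $\tfrac{|X|+\alpha}{2}$ on $Y^{0}$, and by $|X|$ on the at most $\alpha$ vertices of $Y^{+}$, one gets an upper bound on $\sum_{y\in Y}d_X(y)$ of the form $|Y|\tfrac{|X|}{2}-|Z|\tfrac{|X|}{6}+O(\alpha n)$. Comparing with the lower bound $|X|\big(\tfrac n4-\alpha\big)$ and substituting $|X|=(\tfrac12-c)n$, $|Y|=(\tfrac12+c)n$, the main terms $|Y|\tfrac{|X|}{2}$ and $\tfrac n4|X|$ differ by only $\tfrac c2 n|X|$, leaving $|Z|\tfrac{|X|}{6}\le \tfrac c2 n|X| + O(\alpha n)$; using $\alpha<\tfrac{c^* n}{50}$ and $c<\tfrac1{10}$ this gives $|Z|\le 5cn$. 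Plugging back, $e(Y)\le \tfrac12\alpha|Y'| + |Z|\,|Y| = O(cn^2)$, and a routine check of the constants verifies that $e(X)+e(Y) < 4cn^2 = c^* n^2$, as required.

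I expect the only real obstacle to be this estimate $|Z|=O(cn)$: using only the minimum-degree hypothesis one gets merely $|Z|\lesssim\tfrac38 n$, which is hopelessly weak, and one genuinely needs the codegree constraint of Lemma~\ref{lem:edge-codegree} to see that almost every vertex of $Y$ must have about $|X|/2$ neighbours in $X$ — comfortably above the threshold $|X|/3$ — in order for the $X$–$Y$ edge count to reach $n^2/8$. Everything after that is bookkeeping with the constants, where the hypotheses $c^*<\tfrac25$ and $\alpha<\tfrac{c^*n}{50}$ leave ample slack.
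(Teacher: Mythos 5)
Your proposal is correct and follows essentially the same route as the paper's proof: bound the non-crossing edges of the explicit cut $(X,Y)$, use Lemma~\ref{lem:edge-codegree} to show at most $\alpha$ vertices of $Y$ have $X$-degree above $\frac{|X|+\alpha}{2}$, double-count $e(X,Y)$ against the minimum-degree lower bound to show the set of vertices with $X$-degree below $\frac{|X|}{3}+\alpha$ (your $Z$, the paper's $S$) has size $O(cn)$, and invoke Corollary~\ref{cor:1/3-sparse} to make the rest of $Y$ sparse. The constants you obtain ($|Z|\le 5cn$, total $<4cn^2=c^*n^2$) match the paper's.
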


\begin{proof}
  We will use $c < \frac{1}{10}$.  Let $Y$ be the complement of $X$.
  It suffices to show that the total number of edges spanned within
  each of $X$ and $Y$ is at most $c^* n^2$, because the max-cut can only
  do better.  Since $G[X]$ has maximum degree at most $\alpha$, we
  clearly have $e(X) \leq \frac{\alpha |X|}{2}$.

By the minimum degree condition, each vertex of $X$ must have degree at
least $\frac{n}{4}$, and at most $\alpha$ of its neighbors can fall back in
$X$.  Therefore, the total number of edges from $X$ to $Y$ is already
\begin{equation}
  e(X, Y)
  \geq
  \left( \frac{1}{2} - c \right)n \cdot \left( \frac{n}{4} - \alpha \right)
  >
  \frac{n^2}{8} - \frac{cn^2}{4} -  \frac{\alpha n}{2} \,.
  \label{eq:half-sparse-eXY-}
\end{equation}
Let $A \subset Y$ be the vertices of $Y$ which have more than
$\frac{|X| + \alpha}{2}$ neighbors in $X$.  As in the beginning of the
proof of Lemma \ref{lem:big-sparse}, we must have $|A| \leq \alpha$.
Summing the $X$-degrees of the vertices in $Y$, we find that
\begin{align}
  \nonumber
  e(X, Y)
  &\leq
  \alpha |X| + (|Y| - \alpha) \cdot \frac{|X| + \alpha}{2} \\
  \nonumber
  &=
  \alpha \left( \frac{1}{2} - c \right)n
  + \frac{1}{2} \left( \frac{n}{2} + cn - \alpha \right)
  \left( \frac{n}{2} - cn + \alpha \right) \\
  \nonumber
  &=
  \frac{\alpha n}{2}-c \alpha n
  + \frac{1}{2} \left(
  \frac{n^2}{4} - (cn - \alpha)^2 \right) \\
  &<
  \frac{\alpha n}{2} +
  \frac{n^2}{8}
  \label{eq:half-sparse-eXY+}
\end{align}
The amount of slack between the bounds for $e(X, Y)$ in
\eqref{eq:half-sparse-eXY-} and \eqref{eq:half-sparse-eXY+} is at most
$\alpha n + \frac{cn^2}{4}$.

Let $S$ be the subset of vertices in $Y$ whose $X$-degree is at most
$\frac{|X|}{3} + \alpha$.  Just as in the proof of Lemma
\ref{lem:big-sparse}, we may use our bound on the slack to control the size
of $S$.  Indeed, in our upper bound \eqref{eq:half-sparse-eXY+}, we used a
bound of at least $\frac{|X| + \alpha}{2}$ for every vertex of $Y$.  Each
vertex of $S$ now reduces the bound of \eqref{eq:half-sparse-eXY+} by
\begin{equation}
  \frac{|X|}{6} - \frac{\alpha}{2}
  \geq
  \frac{n}{15} - \frac{\alpha}{2}
  \geq
  \frac{n}{20} \,.
  \label{eq:half-sparse-1/3-boost}
\end{equation}
Here, we used $c < \frac{1}{10}$ to bound $|X| \leq 0.4 n$, and $\alpha
\leq \frac{n}{30}$.  Therefore, the size of $S$ is at most the slack
divided by \eqref{eq:half-sparse-1/3-boost}:
\begin{equation}
  |S|
  \leq
  \left( \alpha n + \frac{cn^2}{4} \right) / \left( \frac{n}{20} \right)
  =
  20 \alpha + 5cn \,.
  \label{eq:half-sparse-S}
\end{equation}
Using this, we may finally bound the number of edges in $Y$.  The key
observation is that Corollary \ref{cor:1/3-sparse} forces the induced
subgraph on $Y \setminus S$ to have maximum degree at most $\alpha$.
Therefore, even if $S$ were complete to itself and to the rest of $Y$,
\begin{displaymath}
  e(Y)
  \leq
  \frac{\alpha (|Y| - |S|)}{2}
  +
  \frac{|S|^2}{2}
  +
  |S| \cdot (|Y| - |S|)
  <
  |Y| |S| + \frac{\alpha |Y|}{2} \,.
\end{displaymath}
Combining this with \eqref{eq:half-sparse-S} and our initial bound on
$e(X)$, we obtain
\begin{align*}
  e(X) + e(Y)
  &<
  |Y| |S| + \frac{\alpha n}{2} \\
  &\leq
  (0.6n) (20 \alpha + 5cn) + \frac{\alpha n}{2} \\
  &=
  3cn^2 + 12.5 \alpha n \\
  &\leq 4cn^2 = c^* n^2 \,.
\end{align*}
Here, we used $c < \frac{1}{10}$ to bound $|Y| \leq 0.6n$, and $\alpha
\leq \frac{c^* n}{50} = \frac{cn}{12.5}$.  This completes the proof.
\end{proof}

\section{Refinement of stability}
\label{sec:stability}

Both arguments have now found very good cuts.  In this section, we
show how to finish the argument from this point.

\begin{lemma}
  Let $G$ be a $K_4$-free graph on $n$ vertices, at least $\frac{n^2}{8}$ edges,and
  independence number at most $\alpha \leq cn$.  Suppose its vertices have
  been partitioned into $L \cup R$, and $e(L) + e(R) \leq c n^2$.  Then
  $|L|$ and $|R|$ are both within the range $\big( \frac{1}{2} \pm
  \sqrt{3c} \big) n$.
  \label{lem:great-cut=1/2}
\end{lemma}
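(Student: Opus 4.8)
The plan is to show that a cut with few non-crossing edges must be nearly balanced by bounding the edge count in terms of $|L|$ and $|R|$. First I would observe that since almost all edges cross the cut, we have $e(G) \le |L|\,|R| + e(L) + e(R) \le |L|\,|R| + cn^2$. Writing $|L| = (\frac12 + x)n$ and $|R| = (\frac12 - x)n$, we get $|L|\,|R| = (\frac14 - x^2)n^2$, so the assumption $e(G) \ge \frac{n^2}{8}$ forces
\[
  \frac{n^2}{8} \le \left(\frac14 - x^2\right)n^2 + cn^2,
\]
which rearranges to $x^2 \le \frac18 + c$. That alone is not strong enough; the key point is that $\frac{n^2}{8}$ is essentially the maximum density of a $K_4$-free graph with small independence number, and that near this extreme the cut must be balanced. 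So the real input must be a lower bound on $e(G)$ combined with a better upper bound than the trivial $|L|\,|R|$.

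The refined idea is to use the minimum-degree-type structure more carefully, or rather to use the fact that the \emph{crossing} edges alone cannot be too numerous relative to a balanced partition. Actually the cleanest route: since $e(L) + e(R) \le cn^2$ we have $e(L,R) \ge e(G) - cn^2 \ge \frac{n^2}{8} - cn^2$. But trivially $e(L,R) \le |L|\,|R| = (\frac14 - x^2)n^2$. Combining, $\frac18 - c \le \frac14 - x^2$, i.e. $x^2 \le \frac18 + c$, giving $|x| \le \sqrt{\frac18 + c}$ — still too weak. To get $\sqrt{3c}$ I need to exploit $K_4$-freeness plus small independence number on the crossing bipartite graph itself. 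The point is that in a $K_4$-free graph with independence number $\le \alpha$, by Lemma~\ref{lem:pair-density-1/2} applied to (roughly equal halves of) $L$ and $R$, the bipartite density between them is at most $\frac12 + o(1)$, so $e(L,R) \le (\frac12 + O(c))|L|\,|R|$. Plugging this in: $\frac18 - c \le (\frac12 + O(c))(\frac14 - x^2)n^2 / n^2$, which gives $\frac14 - x^2 \ge \frac{\frac18 - c}{\frac12 + O(c)} \ge \frac14 - O(c)$, hence $x^2 = O(c)$, and tracking constants carefully yields $x^2 \le 3c$, i.e. $|L|, |R| \in (\frac12 \pm \sqrt{3c})n$.

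So the steps, in order, are: (1) deduce $e(L,R) \ge \frac{n^2}{8} - cn^2$ from the hypothesis on $e(L)+e(R)$; (2) apply Lemma~\ref{lem:pair-density-1/2} (or a direct argument using Lemma~\ref{lem:edge-codegree}) to bound the $L$–$R$ bipartite density by $\frac12 + O(\alpha/n) \le \frac12 + O(c)$, so $e(L,R) \le (\frac12 + O(c))|L|\,|R|$; (3) combine to get $(\frac12 + O(c))(\frac14 - x^2) \ge \frac18 - c$, and solve for $x$, being careful with the error terms so that the final bound comes out to exactly $\sqrt{3c}$ (which likely needs $c$ small, and the factor $3$ is chosen to absorb the lower-order terms with room to spare). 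The main obstacle is step (2): one must be careful that Lemma~\ref{lem:pair-density-1/2} as stated requires the two sets to have equal size $n/t$, whereas here $|L|$ and $|R|$ are unequal and not of the form $n/t$; I would instead run the argument behind that lemma directly — let $A \subset L$ be the vertices with $R$-degree exceeding $\frac{|R|}{2} + \frac{\alpha}{2}$ (say), note $A$ is independent hence $|A| \le \alpha$, and conclude $e(L,R) \le \alpha|R| + |L|\cdot(\frac{|R|}{2} + \frac{\alpha}{2}) \le \frac{|L|\,|R|}{2} + \alpha n$. Then step (3) becomes $\frac{n^2}{8} - cn^2 \le \frac{|L|\,|R|}{2} + \alpha n \le (\frac14 - x^2)\frac{n^2}{2} + cn^2$, giving $x^2 \le \frac12(\frac12 - \frac18 + 2c) \cdot \ldots$ — on recomputing, $\frac18 - c - \frac{c n^2}{n^2}\cdot\ldots$; carefully, $\frac14 - x^2 \ge \frac14 - \frac{2c + 2c}{1}$ roughly $\ge \frac14 - 4c$ so $x^2 \le 4c$; to land on $3c$ one tightens the codegree threshold and uses $\alpha \le cn$ more sharply. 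I expect the bookkeeping of these constants to be the only real work.
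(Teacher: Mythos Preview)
Your final approach is exactly the paper's: bound $e(L,R)$ via the observation that at most $\alpha$ vertices of one side have cross-degree exceeding $\tfrac{|R|+\alpha}{2}$, then combine with $e(L,R)\ge \tfrac{n^2}{8}-cn^2$. If you keep the tight form $e(L,R)\le \alpha|R|+(|L|-\alpha)\tfrac{|R|+\alpha}{2}=\tfrac{|L||R|}{2}+\tfrac{\alpha n}{2}-\tfrac{\alpha^2}{2}$ (rather than your looser $\tfrac{|L||R|}{2}+\alpha n$), the combination gives $l^2\le 2cn^2+\alpha n\le 3cn^2$, landing precisely on $\sqrt{3c}$ with no further tricks.
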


\begin{proof} Without loss of generality, suppose that $|L|
\leq |R|$, and let $|L| = \frac{n}{2} - l$.  The same argument
that yielded \eqref{eq:half-sparse-eXY+} implies that
\begin{align*}
  e(L, R)
  &\leq
  \alpha |L| + (|R| - \alpha) \cdot \frac{|L| + \alpha}{2} \\
  &=
  \alpha |L| + \frac{1}{2} \left( \frac{n}{2} + l - \alpha \right)
  \left( \frac{n}{2} - l + \alpha \right) \\
  &<
  \alpha |L| + \frac{n^2}{8} - \frac{l^2}{2} + l \alpha \\
  &=
  \frac{n^2}{8} - \frac{l^2}{2} + \frac{\alpha n}{2} \,.
\end{align*}
Combining this with the assumed lower bound on $e(G)$, assumed upper
bound on $e(L) + e(R)$, and $\alpha \leq cn$, we find that
$$ \frac{n^2}{8}
  \leq
  e(G)
  \leq
  cn^2 + \frac{n^2}{8} - \frac{l^2}{2} + \frac{\alpha n}{2},$$
and hence
 $$\frac{l^2}{2} \leq \frac{3cn^2}{2},$$ and   $l \leq \sqrt{3c} \cdot n$,
as desired.  \end{proof}

\medskip

The next result actually uses an extremely weak condition on the minimum
degree.  It leverages it by taking a max-cut, which has the nice property
that every vertex has at least as many neighbors across the cut as on its
own side.  This local optimality property immediately translates the
minimum degree condition to a minimum cross-degree condition, which is very
useful.  Although it may seem like we are re-using many of the techniques
that we introduced for earlier parts of this proof, we are not re-doing the
same work, because we are now proving properties for the max-cut, which
\emph{a priori}\/ could be somewhat different from the partitions obtained
thus far.

\begin{lemma}
  Let $G$ be a $K_4$-free graph on $n$ vertices with minimum degree at
  least $cn$ and independence number at most $\alpha \leq \frac{cn}{36}$.
  Let $L \cup R$ be a max-cut with $\frac{n}{3} \leq |R| \leq
  \frac{2n}{3}$.  Let
  $T \subset L$ be the vertices with $R$-degree greater than $\left(
  \frac{1}{2} - \frac{c}{8} \right) |R|$.  Then every vertex of $L$ has at
  most $\alpha$ neighbors in $T$.
  \label{lem:very-high}
\end{lemma}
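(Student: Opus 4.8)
The plan is to mimic the structure of Corollary \ref{cor:1/3-sparse}: assume some vertex $x \in L$ has more than $\alpha$ neighbors in $T$, note that this neighborhood cannot be independent, extract an edge $yz$ inside it, and derive a contradiction by showing that the triangle $xyz$ has too large a total $R$-degree. So first I would fix such an $x$ and such an edge $yz$ with $y,z \in T$ and $x$ adjacent to both. By definition of $T$, each of $y$ and $z$ has $R$-degree greater than $\left( \frac{1}{2} - \frac{c}{8} \right) |R|$. The issue is that $x$ itself need only have $R$-degree at least... something, so I need a lower bound on the $R$-degree of an arbitrary vertex of $L$.

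Here is where the max-cut local optimality enters, exactly as the paragraph before the lemma advertises. Since $L \cup R$ is a max-cut, every vertex $v \in L$ has at least as many neighbors in $R$ as in $L$; combined with $\deg(v) \geq cn$ this gives $v$ at least $\frac{cn}{2}$ neighbors in $R$. Applying this to $x$, the three vertices $x, y, z$ have $R$-degrees summing to more than $2 \left( \frac{1}{2} - \frac{c}{8} \right) |R| + \frac{cn}{2}$. Using $|R| \leq \frac{2n}{3}$ to bound $\frac{c}{4}|R| \leq \frac{cn}{6}$, this sum exceeds $|R| + \frac{cn}{2} - \frac{cn}{6} = |R| + \frac{cn}{3}$, which is at least $|R| + 3\alpha$ since $\alpha \leq \frac{cn}{36} < \frac{cn}{9}$. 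Now Lemma \ref{lem:L-triangle}, applied with the set $L$ of that lemma taken to be $R$ and the triangle $xyz$, produces either a $K_4$ or an independent set larger than $\alpha$, contradicting the hypotheses on $G$. Hence no vertex of $L$ has more than $\alpha$ neighbors in $T$.

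The only genuinely delicate point — the main obstacle — is making the numerology close: I need $2\left(\frac12 - \frac c8\right)|R| + \frac{cn}{2} > |R| + 3\alpha$, i.e. $\frac{cn}{2} > \frac{c}{4}|R| + 3\alpha$, and this is where both the upper bound $|R| \le \frac{2n}{3}$ (so $\frac{c}{4}|R| \le \frac{cn}{6}$, leaving a surplus of $\frac{cn}{2} - \frac{cn}{6} = \frac{cn}{3}$) and the slack factor $\frac{c}{8}$ in the definition of $T$ (rather than a sloppier threshold) are used; the constant $\frac{1}{36}$ in the bound on $\alpha$ is then comfortably enough, since $3\alpha \le \frac{cn}{12} < \frac{cn}{3}$. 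I do not expect to need the lower bound $|R| \geq \frac{n}{3}$ for this argument, though it is presumably there for consistency with how the lemma is invoked later. Everything else is a direct citation of Lemmas \ref{lem:edge-codegree} and \ref{lem:L-triangle}.
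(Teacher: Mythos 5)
Your proposal is correct and follows essentially the same argument as the paper: max-cut local optimality plus the minimum degree gives every vertex of $L$ at least $\tfrac{cn}{2}$ neighbors in $R$, an edge inside the $T$-neighborhood yields a triangle whose $R$-degree sum exceeds $|R|+3\alpha$, and Lemma \ref{lem:L-triangle} finishes. The only (immaterial) difference is in closing the numerology: the paper bounds $\tfrac{cn}{2}>\tfrac{c|R|}{2}$ and then uses $|R|\geq\tfrac{n}{3}$, whereas you keep $\tfrac{cn}{2}$ and use $|R|\leq\tfrac{2n}{3}$; both give the required slack.
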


\begin{proof}
  The minimum degree condition and the local optimality property of the
  max-cut implies that every vertex of $L$ has $R$-degree at least
  $\frac{cn}{2} > \frac{c |R|}{2}$.  Suppose that $L$ contains a
  triangle which has at least two vertices in $T$.  Then, the sum of the
  triangle's $R$-degrees would exceed
  \begin{displaymath}
    2\left(\frac{1}{2}-\frac{c}{8}\right)|R|+\frac{c|R|}{2}
    =
    |R| + \frac{c|R|}{4} \geq |R| + \frac{cn}{12} \geq |R| + 3\alpha \,.
  \end{displaymath}
  This is impossible by Lemma \ref{lem:L-triangle}.

Now, suppose for the sake of contradiction that some vertex $v \in L$ has
more than $\alpha$ neighbors in $T$.  This neighborhood is too large to be
an independent set, and therefore it contains an edge with both endpoints
in $T$.  That edge, together with $v$, forms one of the triangles
prohibited above.  \end{proof}

\begin{lemma}
  For any $0 < c < 1$, the following holds with $c' = c^2/800$. Assume
  $\alpha < cn/300$.
  Let $G$ be a $K_4$-free graph on $n$ vertices with
  at least $\frac{n^2}{8} + \frac{3 \alpha n}{2}$ edges, and minimum
  degree at least $cn$. Suppose that the max-cut of $G$ partitions
  the vertex set into $L \cup R$ such that $e(L) + e(R) \leq c' n^2$.
  Then $G$ either has a copy of $K_4$, or an independent set of size
  greater than $\alpha$.
  \label{lem:great-cut=done}
\end{lemma}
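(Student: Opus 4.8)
The plan is to argue by contradiction: assume $G$ is $K_4$-free with no independent set larger than $\alpha$ (the trivial case $\alpha=0$ aside, so $\alpha\ge1$), and contradict $e(G)\ge\frac{n^2}{8}+\frac{3\alpha n}{2}$. Write $|L|=\frac n2-l$ and $|R|=\frac n2+l$ with $l\ge 0$. First I would rerun the balance argument of Lemma~\ref{lem:great-cut=1/2}: with $A\subseteq R$ the vertices of $L$-degree exceeding $\frac{|L|+\alpha}{2}$, Lemma~\ref{lem:edge-codegree} forces $|A|\le\alpha$ (two adjacent such vertices have more than $\alpha$ common neighbours in $L$, which spans an edge), and since $e(L,R)=e(G)-e(L)-e(R)\ge\frac{n^2}{8}+\frac{3\alpha n}{2}-c'n^2>\frac{n^2}{9}$ we also get $|L|>\frac n9>\alpha$. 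Bounding $L$-degrees in $R$ by $|L|$ on $A$ and by $\frac{|L|+\alpha}{2}$ elsewhere gives $e(L,R)\le\frac{n^2}{8}-\frac{l^2}{2}+\frac{\alpha n}{2}$, so combining with $e(L)+e(R)\le c'n^2$ and the edge lower bound yields $\alpha n+\frac{l^2}{2}\le c'n^2$. In particular $\alpha\le c'n$ and $l\le\sqrt{2c'}\,n=\frac{c}{20}n$, whence $|L|,|R|\in[\frac{9n}{20},\frac{11n}{20}]\subseteq[\frac n3,\frac{2n}{3}]$.

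Now the cut is balanced enough to invoke Lemma~\ref{lem:very-high}, once as stated and once with $L$ and $R$ interchanged (both hypotheses hold, as $\alpha<\frac{cn}{300}<\frac{cn}{36}$ and $|L|,|R|\in[\frac n3,\frac{2n}{3}]$). This produces $T\subseteq L$, the vertices of $R$-degree greater than $(\frac12-\frac c8)|R|$, and $T'\subseteq R$, the vertices of $L$-degree greater than $(\frac12-\frac c8)|L|$, such that every vertex of $L$ has at most $\alpha$ neighbours in $T$ and every vertex of $R$ at most $\alpha$ in $T'$. Put $s_L=|L\setminus T|$, $s_R=|R\setminus T'|$, $s^*=\max(s_L,s_R)$. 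Partitioning $R$ into $A$, $T'\setminus A$, $R\setminus T'$ and using the per-vertex $L$-degree bounds $|L|$, $\frac{|L|+\alpha}{2}$, $(\frac12-\frac c8)|L|$ respectively — and the symmetric estimate on the $L$-side via $T$, together with $|L|,|R|\ge\frac{9n}{20}$ — I would obtain the refined bound
\[
e(L,R)\;\le\;\frac{n^2}{8}+\frac{\alpha n}{2}-\frac{c\,s^* n}{20}.
\]

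The argument then splits on $s^*$. If $s^*\ge\frac{cn}{40}$, the saving above is at least $c'n^2$, so $e(G)=e(L,R)+e(L)+e(R)\le\frac{n^2}{8}+\frac{\alpha n}{2}$, contradicting the lower bound (as $\alpha\ge1$). So $s^*<\frac{cn}{40}$; then the refined bound gives $e(L)+e(R)\ge\alpha n+\frac{cs^*n}{20}$, whereas Lemma~\ref{lem:very-high} yields $e(T)\le\frac{\alpha|T|}{2}$ and $e(T,L\setminus T)\le\alpha s_L$ (and likewise on the $R$-side), so $e(L)+e(R)\le\frac{\alpha n}{2}+2\alpha s^*+e(L\setminus T)+e(R\setminus T')$. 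Comparing these, and using $2\alpha s^*<\frac{\alpha n}{20}$, gives $e(L\setminus T)+e(R\setminus T')\ge\frac{cs^*n}{20}$, so one of $e(L\setminus T),e(R\setminus T')$ is at least $\frac{cs^*n}{40}$; but that set spans at most $s^*$ vertices, hence fewer than $\frac{(s^*)^2}{2}$ edges, which forces $s^*>\frac{cn}{20}$ — contradicting $s^*<\frac{cn}{40}$. (When $s^*=0$ the inequality $e(L\setminus T)+e(R\setminus T')\ge\frac{cs^*n}{20}$ is already absurd, since its left side vanishes while the surplus of $e(L)+e(R)$ over $\frac{\alpha n}{2}$ is positive.)

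I expect the crux — and the reason the constants $c'=c^2/800$ and the threshold $\frac{cn}{40}$ must be calibrated precisely — to be this dichotomy on $s^*$. The difficulty is that a priori the ``bad'' sets $L\setminus T$ and $R\setminus T'$ could be of moderate size and carry quadratically many internal edges: enough to keep $e(L)+e(R)$ large, yet not so many vertices that crude counting rules them out. The way around it is to pit the two estimates against each other — the bad vertices cost cross-edges, so if there are many of them $e(L,R)$ sinks below $\frac{n^2}{8}$ and we finish by counting; and if there are few, then their internal edge count is capped by $\binom{s^*}{2}$, which is too small to furnish the surplus of $e(L)+e(R)$ over $\frac{\alpha n}{2}$ that $e(G)\ge\frac{n^2}{8}+\frac{3\alpha n}{2}$ demands.
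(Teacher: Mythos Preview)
Your proof is correct and follows essentially the same strategy as the paper's: both use the balance lemma (Lemma~\ref{lem:great-cut=1/2}) to put $|L|,|R|$ near $n/2$, invoke Lemma~\ref{lem:very-high} on each side so that every vertex of $L$ has at most $\alpha$ neighbours in the ``high cross-degree'' set $T$ (and symmetrically in $R$), and then combine a sharpened upper bound on $e(L,R)$ (penalised by the sizes of the low cross-degree sets $S_L=L\setminus T$, $S_R=R\setminus T'$) with the upper bound $e(L)\le |L|\alpha+\binom{|S_L|}{2}$ coming from Lemma~\ref{lem:very-high}. The only organisational difference is that the paper sums the two one-sided cross-edge estimates and packages the endgame as a single inequality $e(G)<\frac{|L||R|}{2}+\frac{3\alpha n}{2}+\frac{|S_L|}{2}\bigl(|S_L|-\frac{cn}{20}\bigr)+\frac{|S_R|}{2}\bigl(|S_R|-\frac{cn}{20}\bigr)$, observing that the quadratics are nonpositive once $|S_L|,|S_R|<\frac{cn}{20}$, whereas you take the stronger of the two one-sided bounds and run an explicit dichotomy on $s^*$ at threshold $\frac{cn}{40}$; these are two presentations of the same idea.
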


\begin{proof} Assume for the sake of
contradiction that $G$ has no $K_4$ or independent sets larger than
$\alpha$.  Let $A_L \subset L$ be the vertices whose $R$-degree exceeds
$\frac{|R| + \alpha}{2}$, and let $A_R \subset R$ be the vertices whose
$L$-degree exceeds $\frac{|L| + \alpha}{2}$.  As in the beginning of the
proof of Lemma \ref{lem:big-sparse}, we must have $|A_L|, |A_R| \leq
\alpha$.

Next, let $S_L \subset L$ be the vertices whose $R$-degree is at most
$\big( \frac{1}{2} - \frac{c}{8} \big) |R|$, and let $S_R \subset R$ be the
vertices whose $L$-degree is at most $\big( \frac{1}{2} - \frac{c}{8} \big)
|L|$.  We first show that $S_L$ and $S_R$ must be small.  For this, we
count crossing edges in two ways.  If we add all $R$-degrees of vertices in
$L$, and all $L$-degrees of vertices in $R$, then we obtain exactly $2 e(L,
R)$.  Since $|A_L|, |A_R| \leq \alpha$, we can bound this sum by
\begin{align*}
  2 e(L, R)
  &\leq
  \left[ \alpha |R| + |S_L| \left( \frac{1}{2} - \frac{c}{8} \right) |R|
  + (|L| - \alpha - |S_L|) \frac{|R| + \alpha}{2} \right] \\
  & \quad \quad +
  \left[ \alpha |L| + |S_R| \left( \frac{1}{2} - \frac{c}{8} \right) |L|
  + (|R| - \alpha - |S_R|) \frac{|L| + \alpha}{2} \right] \,.
\end{align*}
The first bracket simplifies to
\begin{displaymath}
  \frac{|L| |R|}{2} - |S_L| \left( \frac{c |R|}{8} + \frac{\alpha}{2} \right)
  + \alpha \left( |R| + \frac{|L|}{2} - \frac{|R|}{2} \right) -
  \frac{\alpha^2}{2} \, .
\end{displaymath}
Since $c' < \frac{1}{300}$ and $\alpha < \frac{n}{300}$, Lemma
\ref{lem:great-cut=1/2} bounds $|R| > 0.4 n$.  Therefore, the first bracket
is less than
\begin{displaymath}
  \frac{|L| |R|}{2} - |S_L| \left( \frac{cn}{20} \right)
  + \alpha \cdot \frac{|L| + |R|}{2}
  =
  \frac{|L| |R|}{2} - |S_L| \left( \frac{cn}{20} \right)
  + \frac{\alpha n}{2} \,,
\end{displaymath}
and similarly with the second bracket. Hence
\begin{equation}
  e(L, R)
  <
  \frac{|L| |R|}{2} - |S_L| \left( \frac{cn}{40} \right)
  - |S_R| \left( \frac{cn}{40} \right)
  + \frac{\alpha n}{2} \,.
  \label{eq:end:small-S:eLR+}
\end{equation}

On the other hand, we were given that $e(L) + e(R) \leq c' n^2$, while also
$e(G) \geq \frac{n^2}{8} + \frac{3 \alpha n}{2}$.  Therefore, we must also
have
\begin{equation}
  e(L, R) \geq \frac{n^2}{8} - c' n^2 + \frac{3 \alpha n}{2} \,.
  \label{eq:end:small-S:eLR-}
\end{equation}
Combining \eqref{eq:end:small-S:eLR-}, \eqref{eq:end:small-S:eLR+}, and $|L||R| \leq \frac{n^2}{4}$, we find that
\begin{equation}
   |S_L| \left( \frac{cn}{40} \right) + |S_R| \left( \frac{cn}{40} \right)
   <
   c' n^2 \,,
   \label{eq:end:small-S:combine}
\end{equation}
and in particular, both $|S_L|$ and $|S_R|$ are at most $\frac{40 c'
n}{c}$.  Since we defined $c' = \frac{c^2}{800}$, we have
\begin{equation}
  |S_L|, |S_R| < \frac{cn}{20} \,.
  \label{eq:end:small-S}
\end{equation}

Finally, we derive more precise bounds on $e(L)$ and $e(R)$, and combine
them with \eqref{eq:end:small-S:eLR+}.  We start with $e(L)$.  By Lemma
\ref{lem:very-high}, every vertex of $L$ can only send at most $\alpha$
edges to $L \setminus S_L$, so the number of edges that are incident to $L
\setminus S_L$ is at most $|L| \alpha$.  All remaining edges in $L$
must have both endpoints in $S_L$, and even if they formed a complete graph
there, their number would be bounded by $\frac{|S_L|^2}{2}$.  Thus
$e(L) < |L| \alpha + \frac{|S_L|^2}{2}$.  Combining this with a similar
estimate for $e(R)$, and with inequality \eqref{eq:end:small-S:eLR+}, we
find that
\begin{align}
  \nonumber
  e(G)
  &<
  (|L| + |R|) \alpha + \frac{|S_L|^2}{2} + \frac{|S_R|^2}{2}
  + \frac{|L| |R|}{2} - |S_L| \left( \frac{cn}{40} \right)
  - |S_R| \left( \frac{cn}{40} \right)
  + \frac{\alpha n}{2} \\
  &=
  \frac{|L| |R|}{2} +
  \frac{3 \alpha n}{2}
  + \frac{|S_L|}{2} \left( |S_L| - \frac{cn}{20} \right)
  + \frac{|S_R|}{2} \left( |S_R| - \frac{cn}{20} \right) \,.
  \label{eq:end:eG+}
\end{align}
Inequality \eqref{eq:end:small-S} shows that the quadratics in $|S_L|$ and
$|S_R|$ are nonpositive.  The maximum possible value of $\frac{|L| |R|}{2}$
is $\frac{n^2}{8}$.  This contradicts our given $e(G) \geq \frac{n^2}{8} +
\frac{3 \alpha n}{2}$, thereby completing the proof.  \end{proof}

\section{Putting everything together}
\label{sec:combine}

Now we finish the proofs by putting the parts together.  Combining the
results of Sections~\ref{sec:great-cut-regularity} and \ref{sec:stability},
we obtain Theorem~\ref{thm:some-reg} which involves an application of
regularity with an absolute constant regularity parameter as input.

\medskip

\begin{proof}[Proof of Theorem \ref{thm:some-reg}] Let $\gamma$ be the
result of feeding $c = \frac{1}{51200}$ into Lemma
\ref{lem:reg:great-cut}, and let $\gamma_0 = 4 \gamma$.  We are given an
$n$-vertex graph with $m \geq \frac{n^2}{8} + \frac{3}{2} \alpha n$
edges, with no $K_4$ and with all independent sets of size at most
$\alpha$, where $\alpha < \gamma_0 n$.  By Lemma \ref{lem:min-degree-1/8},
we may extract a subgraph $G'$ on $n'$ vertices which has at least
$n'\frac{m}{n}\geq n'\left(\frac{n^2}{8}+\frac{3}{2}\alpha n\right)/n \geq \frac{(n')^2}{8} + \frac{3}{2} \alpha n'$ edges, no $K_4$, independence
number at most $\alpha < \gamma n'$, and also minimum degree at least
$\frac{m}{n} > \frac{n'}{8}$.

By Lemma \ref{lem:reg:great-cut}, $G'$ has a cut with at most
$\frac{(n')^2}{51200}$ non-crossing edges.  Finally, the minimum degree
condition of $\frac{n'}{8}$ allows us to apply Lemma
\ref{lem:great-cut=done} with $c = \frac{1}{8}$, as $\frac{1}{51200} =
\frac{(1/8)^2}{800}$ then is the corresponding $c'$.  This completes
the proof.  \end{proof}

\medskip

Next, by combining the results of Sections~\ref{sec:great-cut-no-regularity} and \ref{sec:stability}, we
prove Theorem~\ref{thm:no-reg} without any regularity at all.

\medskip

\begin{proof}[Proof of Theorem \ref{thm:no-reg}]
  By Lemma \ref{lem:min-degree}, we may assume that the minimum degree is
  at least $\frac{n}{4} + (10^{10}-1)\alpha$.  Lemma \ref{lem:indep-cap}
  lets us assume that $\alpha \leq n / (2 \cdot 10^{10})$.  This satisfies
  the conditions of Lemma \ref{lem:great-cut=done} with $c = \frac{1}{4}$,
  so it suffices to show that the max-cut leaves only at most $c' n^2 =
  \frac{(1/4)^2}{800} \cdot n^2 = \frac{n^2}{12800}$ crossing edges.  To
  establish this, we use Lemma \ref{lem:half-sparse=big-cut}, with $c^* =
  \frac{1}{12800}$.  This requires that $\alpha < \frac{c^* n}{50}$, which
  we have, as well as a sparse set $X$ of $\big( \frac{1}{2} - c \big) n$
  vertices, where $c = c^* / 4 = \frac{1}{51200}$.  This is provided by
  Lemma \ref{lem:big-sparse}, which then requires that all degrees are at
  least $\frac{n}{4} + C \alpha$, with $C = \frac{9}{8} \cdot 51200 \cdot
  10^5 + 1 < 6 \cdot 10^9$, as well as requiring that $\alpha \leq 10^{-5}
  \cdot \frac{n}{3 \cdot 51200}$.  As $10^{-5} \cdot \frac{n}{3 \cdot
  51200} \approx 6.5 \cdot 10^{-11}$, our bound from Lemma
  \ref{lem:indep-cap} is indeed sufficient.
\end{proof}

\section{Dependent random choice} \label{sec:DRC}

In this section, we use the good cut discovered by our constant-parameter
regularity approach to find a pair of large disjoint sets of vertices which
has density extremely close to $\frac{1}{2}$.  Then, we introduce our
variant of the dependent random choice technique, and use this to find a large independent set or a $K_4$.

\begin{lemma}
  For any constant $c > 0$, there is a constant $c' > 0$ such that the
  following holds.  Suppose that $\alpha \leq \frac{c^2 n}{1600}$.  Let $G$
  be a graph with at least $\frac{n^2}{8}$ edges, minimum degree at least
  $cn$, no $K_4$, and independence number at most $\alpha$.  Suppose that
  the max-cut of $G$ partitions the vertex set into $L \cup R$ such that
  $e(L) + e(R) \leq c' n^2$.   Then all of the following hold:
  \begin{description}
    \item[(i)] Each of $|L|$ and $|R|$ are between $0.4 n$ and $0.6 n$.
    \item[(ii)] At most $\alpha$ vertices of $L$ have $R$-degree greater
      than $\frac{|R| + \alpha}{2}$.
    \item[(iii)] At most $\alpha$ vertices of $R$ have $L$-degree greater
      than $\frac{|L| + \alpha}{2}$.
    \item[(iv)] Both induced subgraphs $G[L]$ and $G[R]$ have maximum
      degree at most $\big( \frac{120}{c} + 1 \big) \alpha$.
  \end{description}
  \label{lem:1/8:great-cut=small-S}
\end{lemma}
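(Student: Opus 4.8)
The plan is to derive the four conclusions essentially in order, recycling the tools of Section~\ref{sec:stability}. Throughout we may assume $c<1$ (otherwise the minimum-degree hypothesis is unsatisfiable and there is nothing to prove), and we will take $c'=c^2/3200$; any sufficiently small constant multiple of $c^2$ works.

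Part~(i) follows directly from Lemma~\ref{lem:great-cut=1/2}: since $e(L)+e(R)\le c'n^2\le\frac{c^2}{1600}n^2$ and $\alpha\le\frac{c^2}{1600}n$, that lemma applied with parameter $\frac{c^2}{1600}$ puts $|L|$ and $|R|$ inside $\bigl(\tfrac12\pm\tfrac{c\sqrt3}{40}\bigr)n$, which lies in $[0.4n,0.6n]$ because $c<1$. Parts~(ii) and~(iii) are the familiar edge-codegree argument used at the start of the proof of Lemma~\ref{lem:big-sparse}: were there more than $\alpha$ vertices of $L$ with $R$-degree exceeding $\tfrac{|R|+\alpha}{2}$, this set would span an edge $wx$; the $R$-neighborhoods of $w$ and $x$ would then overlap in more than $\alpha$ vertices by inclusion--exclusion; that overlap would span an edge $yz$; and $wxyz$ would be a $K_4$, contradicting the hypothesis.

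For part~(iv) I would first invoke Lemma~\ref{lem:very-high}, which is now available since (i) gives $\tfrac n3\le|L|,|R|\le\tfrac{2n}{3}$ and $\alpha\le\tfrac{cn}{36}$: letting $T_L\subseteq L$ (resp.\ $T_R\subseteq R$) be the set of vertices whose cross-degree exceeds $\bigl(\tfrac12-\tfrac c8\bigr)$ of the opposite part, every vertex of $L$ (resp.\ $R$) has at most $\alpha$ neighbors in $T_L$ (resp.\ $T_R$). Set $S_L=L\setminus T_L$, $S_R=R\setminus T_R$, and $s=|S_L|+|S_R|$. Counting crossing edges by summing cross-degrees---using (ii) and (iii) to bound the at most $\alpha$ exceptional vertices on each side by the whole opposite part, the vertices of $S_L$ and $S_R$ by $\bigl(\tfrac12-\tfrac c8\bigr)$ of the opposite part, and everything else by half of it---and then using $|L|,|R|\ge0.4n$, $e(L,R)=e(G)-e(L)-e(R)\ge\tfrac{n^2}{8}-e(L)-e(R)$, and $|L|\,|R|\le\tfrac{n^2}{4}$, a short computation yields
\[
  \frac{cn}{20}\,s\ \le\ 2\bigl(e(L)+e(R)\bigr)+\alpha n .
\]

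The heart of the argument is a two-pass bootstrap on this inequality, and this is the step I expect to be the main obstacle. Feeding in only the hypothesis $e(L)+e(R)\le c'n^2$ gives $s\le\tfrac{cn}{40}$---this is small but still linear in $n$, hence not yet enough for (iv). However, once $s_L,s_R\le\tfrac{cn}{40}$, Lemma~\ref{lem:very-high} yields a far sharper estimate for $e(L)$: the edges of $G[L]$ incident to $T_L$ number at most $|L|\alpha$, and all remaining edges of $G[L]$ lie inside $S_L$, so $e(L)\le|L|\alpha+\binom{|S_L|}{2}\le|L|\alpha+\tfrac{cn}{80}|S_L|$, and symmetrically for $e(R)$; hence $e(L)+e(R)\le n\alpha+\tfrac{cn}{80}s$. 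Substituting this back into the displayed inequality collapses the slack and forces $s\le\tfrac{120\alpha}{c}$. Then for every $v\in L$ we get $d_L(v)=d_{T_L}(v)+d_{S_L}(v)\le\alpha+|S_L|\le\bigl(\tfrac{120}{c}+1\bigr)\alpha$, which is conclusion~(iv), and $G[R]$ is handled symmetrically. The delicate point is calibrating constants so the bootstrap actually closes: the hypothesis $\alpha\le\tfrac{c^2n}{1600}$ and the $c/8$ threshold baked into Lemma~\ref{lem:very-high} are exactly what is needed for the first pass to give $s_L\le\tfrac{cn}{40}$, for the resulting $\binom{|S_L|}{2}$ to be absorbable into the small term $\tfrac{cn}{80}|S_L|$, and for the second substitution to collapse the inequality rather than merely improve it.
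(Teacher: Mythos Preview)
Your proposal is correct and follows essentially the same approach as the paper. The paper also sets $c'=c^2/3200$, obtains (i)--(iii) exactly as you do, derives the same first-pass bound $|S_L|,|S_R|<cn/40$, and then re-estimates $e(L),e(R)$ via Lemma~\ref{lem:very-high} as $e(L)<|L|\alpha+|S_L|^2/2$; the only cosmetic difference is that the paper packages the second pass as a single combined inequality $e(G)<\tfrac{|L||R|}{2}+\tfrac{3\alpha n}{2}+\tfrac{|S_L|}{2}(|S_L|-\tfrac{cn}{20})+\tfrac{|S_R|}{2}(|S_R|-\tfrac{cn}{20})$ and reads off $|S_L|<120\alpha/c$ from the quadratic slack, whereas you substitute the improved $e(L)+e(R)$ back into your displayed linear inequality---the two computations are equivalent.
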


\begin{proof} We may assume $c < 1$, or else there is nothing
to prove.  Let $c' = \frac{c^2}{3200}$.  Now proceed exactly as in the
proof of Lemma \ref{lem:great-cut=done}, and again obtain inequality
\eqref{eq:end:small-S:eLR+}.  Note that along the way, parts (i)--(iii) are
established.  But after reaching \eqref{eq:end:small-S:eLR+}, this time, we
only know $e(G) \geq \frac{n^2}{8}$, so instead of
\eqref{eq:end:small-S:eLR-}, we now have
\begin{equation}
  e(L, R) \geq \frac{n^2}{8} - c' n^2 \,.
  \label{eq:1/8:small-S:eLR-}
\end{equation}
Combining \eqref{eq:end:small-S:eLR+} and \eqref{eq:1/8:small-S:eLR-}, we
obtain the following instead of \eqref{eq:end:small-S:combine}:
\[
    |S_L| \left( \frac{cn}{40} \right) + |S_R| \left( \frac{cn}{40} \right)
  <
  c' n^2 + \frac{\alpha n}{2},
  \]
so
\[
|S_L|
  <
  \frac{40 c' n}{c} + \frac{20 \alpha}{c}
  \leq
  \frac{cn}{40} \,,
  \label{eq:1/8:small-S:combine}
\]
since $c' = \frac{c^2}{3200}$ and $\alpha \leq \frac{c^2 n}{1600}$.  Note
that this is twice as strong as \eqref{eq:end:small-S}.  The same argument
as in the proof of Lemma \ref{lem:great-cut=done} leads again to
\eqref{eq:end:eG+}, which we copy here for the reader's convenience.
\begin{displaymath}
  e(G)
  <
  \frac{|L| |R|}{2} +
  \frac{3 \alpha n}{2}
  + \frac{|S_L|}{2} \left( |S_L| - \frac{cn}{20} \right)
  + \frac{|S_R|}{2} \left( |S_R| - \frac{cn}{20} \right) \,.
\end{displaymath}

This time, we only have $e(G) \geq \frac{n^2}{8}$.  As before, the maximum
possible value of $\frac{|L| |R|}{2}$ is $\frac{n^2}{8}$, so the
nonpositive quadratics in $|S_L|$ and $|S_R|$ are permitted to cost us up
to $\frac{3\alpha n}{2}$ of slack.  However, as we established in
\eqref{eq:1/8:small-S:combine} that $|S_L| < \frac{cn}{40}$, the value of
$\big( \frac{cn}{20} - |S_L| \big)$ is between $\frac{cn}{40}$ and
$\frac{cn}{20}$.  Therefore, we must have
\[
\frac{|S_L|}{2} \left( \frac{cn}{40} \right)
\leq
\frac{|S_L|}{2} \left( \frac{cn}{20} - |S_L| \right)
  <
  \frac{3 \alpha n}{2}
  \]
and hence $|S_L| < \frac{120 \alpha}{c}$.

By Lemma \ref{lem:very-high}, every vertex of $L$ has at most $\alpha$
neighbors in $L \setminus S_L$.  Therefore, every vertex of $L$ has at most
$|S_L| + \alpha < \frac{120 \alpha}{c} + \alpha$ neighbors in $L$.  A
similar argument holds in $R$, establishing part (iv) of this theorem, and
completing the proof.  \end{proof}

\begin{corollary}
  There is an absolute constant $\gamma_0$ such that for every $\gamma <
  \gamma_0$, every $n$-vertex graph with at least $\frac{n^2}{8}$ edges,
  no copy of $K_4$, and independence number at most $\gamma n$, has two
  disjoint subsets of vertices $X$ and $Y$ with $|X| \geq \frac{n}{16}$,
  $|Y| \geq \frac{n}{10}$, and where every vertex of $X$ has $Y$-degree at
  least $\big( \frac{1}{2} - 20000 \gamma \big) |Y|$.
  \label{cor:lots-half-degrees}
\end{corollary}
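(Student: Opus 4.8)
\noindent The plan is to pass to a subgraph of large minimum degree, run the constant-parameter cut machinery of Sections~\ref{sec:great-cut-regularity} and~\ref{sec:stability} (culminating in Lemma~\ref{lem:1/8:great-cut=small-S}), and then read off $X$ and $Y$ from the two sides of the resulting near-bisecting max-cut by a short edge count. Throughout, write $\alpha = \gamma n$ for the independence-number bound, which is inherited by any induced subgraph.

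First I would apply Lemma~\ref{lem:min-degree-1/8} with $m = \frac{n^2}{8}$ to extract an induced subgraph $G'$ on $n'$ vertices with at least $n'\cdot\frac{n}{8} \geq \frac{(n')^2}{8}$ edges and minimum degree at least $\frac{n}{8} \geq \frac{n'}{8}$. Since $G'$ inherits $K_4$-freeness, Tur\'an's theorem gives $n'\cdot\frac{n}{8} \leq e(G') \leq \frac{(n')^2}{3}$, so $n' \geq \frac{3n}{8}$; this is used only for bookkeeping at the end. Feeding the absolute constant $\hat c = \frac{(1/8)^2}{3200}$ into Lemma~\ref{lem:reg:great-cut} yields an absolute threshold $\gamma_{\mathrm{reg}}$, and provided $\gamma$ is below a suitable absolute constant $\gamma_0$ (so that $\alpha \leq \frac{8}{3}\gamma n' \leq \gamma_{\mathrm{reg}} n'$ and $\alpha \leq \frac{(1/8)^2 n'}{1600}$ both hold), Lemma~\ref{lem:reg:great-cut} produces a cut of $G'$ with at most $\hat c(n')^2$ non-crossing edges. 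As the max-cut of $G'$ minimizes non-crossing edges, the max-cut $L \cup R$ of $G'$ also has at most $\hat c(n')^2 = \frac{(1/8)^2}{3200}(n')^2$ non-crossing edges, so Lemma~\ref{lem:1/8:great-cut=small-S} applies to $G'$ with min-degree parameter $c = \frac{1}{8}$: it yields $|L|,|R| \in [0.4 n', 0.6 n']$; the fact that at most $\alpha$ vertices of $L$ have $R$-degree exceeding $\frac{|R|+\alpha}{2}$, and symmetrically for $R$; and that $G'[L]$ and $G'[R]$ have maximum degree at most $961\,\alpha$, whence $e(L)+e(R) \leq \frac{961\,\alpha\, n'}{2}$.

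Next I would set $Y = R$, so $|Y| \geq 0.4 n' \geq 0.15 n > \frac{n}{10}$, and put $B = \{v \in L : \deg_R(v) < \big(\frac{1}{2} - 20000\gamma\big)|R|\}$ and $X = L \setminus B$. Every vertex of $X$ has $Y$-degree at least $\big(\frac{1}{2} - 20000\gamma\big)|Y|$ by construction, so it remains to prove $|X| \geq \frac{n}{16}$, i.e.\ to bound $|B|$. On the one hand, $e(L,R) = e(G') - e(L) - e(R) \geq \frac{(n')^2}{8} - \frac{961\,\alpha\, n'}{2}$. On the other hand, bounding $\deg_R(v)$ above by $\big(\frac{1}{2} - 20000\gamma\big)|R|$ for $v\in B$, by $|R|$ for the at most $\alpha$ vertices of high $R$-degree, and by $\frac{|R|+\alpha}{2}$ for the remaining vertices of $L$, together with $|L||R| \leq \frac{(n')^2}{4}$, yields
\[
  20000\gamma\,|R|\,|B| \;\leq\; 482\,\alpha\, n' \;=\; 482\,\gamma\, n\, n'.
\]
The factor $\gamma$ cancels: since $|R| \geq 0.4 n'$, this gives $|B| \leq \frac{482\, n}{8000} < 0.061\, n$, so $|X| = |L| - |B| \geq 0.4 n' - 0.061 n \geq 0.15 n - 0.061 n > \frac{n}{16}$, where we used $n' \geq \frac{3n}{8}$. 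Taking $\gamma_0$ to be the smallest of the absolute thresholds that arose completes the argument.

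The crux, and the place where care is needed, is the bound on $|B|$. What makes it work is conclusion~(iv) of Lemma~\ref{lem:1/8:great-cut=small-S}, which controls $e(L)+e(R)$ by $O(\alpha n')$ --- far stronger than the $O((n')^2)$ that regularity alone supplies --- so that $e(L,R)$ sits within $O(\alpha n') = O(\gamma n n')$ of its maximum possible value $\frac{(n')^2}{8}$ (using also the lossless estimate $|L||R|\leq\frac{(n')^2}{4}$). The $20000\gamma$ cushion in the required $Y$-degree is exactly what converts this $O(\gamma n n')$ of edge-slack into a bound $|B| = O(n)$ with a small enough constant; with a cushion much smaller than $\gamma$, the bound on $|B|$ could exceed $|L|$ and the whole argument would collapse --- this is why a factor like $20000$, rather than an $O(1)$ constant, appears. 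The Tur\'an bound $n' \geq \frac{3n}{8}$ plays only the bookkeeping role of ensuring that $0.4 n'$ and $|L|-|B|$ clear the targets $\frac{n}{10}$ and $\frac{n}{16}$; one could instead sharpen $|L|,|R|\approx\frac{n'}{2}$ via Lemma~\ref{lem:great-cut=1/2}, but that refinement is not needed here.
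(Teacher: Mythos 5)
Your proof is correct and follows essentially the same route as the paper: extract a min-degree subgraph via Lemma~\ref{lem:min-degree-1/8}, get a near-perfect cut from Lemma~\ref{lem:reg:great-cut}, apply Lemma~\ref{lem:1/8:great-cut=small-S} with $c=\frac18$, and then bound the set of low-$R$-degree vertices of $L$ by playing the degree-sum upper bound on $e(L,R)$ against the lower bound coming from part~(iv). The only differences are cosmetic: you bound the bad set $B$ directly (the paper argues by contradiction with threshold $\frac{|R|}{2}-1923.5\alpha$ and converts to $20000\gamma|R|$ afterward) and you use Tur\'an to get $n'\geq \frac{3n}{8}$ in place of the paper's $n'\geq\frac{n}{4}$ plus the choice $|L|\geq|R|$.
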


\begin{proof} Let $\alpha = \gamma n$.  We begin in the same
way as in our proof of Theorem \ref{thm:some-reg} in Section
\ref{sec:combine}, except we use Lemma \ref{lem:1/8:great-cut=small-S}
instead of Lemma \ref{lem:great-cut=done}.  Indeed, let $\gamma_1$ be the
result of feeding $c = \frac{1}{204800}$ into Lemma
\ref{lem:reg:great-cut}, and let $\gamma_0 = \gamma_1 / 4$.  We are given an
$n$-vertex graph with at least $\frac{n^2}{8}$ edges, with no $K_4$ and
with all independent sets of size at most $\alpha$, where $\alpha <
\gamma_0 n$.  By Lemma \ref{lem:min-degree-1/8}, we may extract a subgraph
$G'$ on $n' \geq \frac{n}{4}$ vertices which has at least
$\frac{(n')^2}{8}$ edges, no $K_4$, independence number at most $\gamma n <
\gamma_1 n'$, and also minimum degree at least $\frac{n'}{8}$.

By Lemma \ref{lem:reg:great-cut}, $G'$ has a cut with at most
$\frac{(n')^2}{204800}$ non-crossing edges.  Finally, the minimum degree
condition of $\frac{n'}{8}$ allows us to apply Lemma
\ref{lem:1/8:great-cut=small-S} with $c = \frac{1}{8}$, as
$\frac{1}{204800} = \frac{(1/8)^2}{3200}$ then is the corresponding $c'$.
This gives a bipartition $L \cup R$ of $G'$.  Without loss of generality,
assume that $|L| \geq |R|$, so that $|L| \geq \frac{n'}{2} \geq
\frac{n}{8}$.  Part (i) of that Lemma gives $|R| \geq 0.4 n' \geq
\frac{n}{10}$.  Part (iv) establishes that all degrees in $G[L]$ and
$G[R]$ are at most $\big( \frac{120}{1/8} + 1 \big) \alpha = 961 \alpha$.
Hence
\begin{equation}
  e(L, R)
  \geq
  \frac{(n')^2}{8} - \frac{961 \alpha n'}{2}
  \geq
  \frac{|L| |R|}{2} - \frac{961 \alpha n'}{2}
  \geq
  \frac{|L| |R|}{2} - 961 \alpha |L|
  \,.
  \label{eq:1/8:lots-half:eLR-}
\end{equation}
By part (ii), at most $\alpha$ vertices of $L$ can have $R$-degree greater
than $\frac{|R| + \alpha}{2}$.
Let $Y = R$, and let $X \subset L$ be the vertices that have $R$-degree at
least $\frac{|R|}{2} - 1923.5 \alpha$.  We claim that $|X| \geq
\frac{|L|}{2}$.  Indeed, if this were not the case, then by summing up the
$R$-degrees of the vertices of $L$, we would find
\begin{align*}
  e(L, R)
  &\leq
  \frac{|L|}{2} \left( \frac{|R|}{2} - 1923.5 \alpha \right)
  +
  \left( \frac{|L|}{2} - \alpha \right) \left( \frac{|R|}{2} +
  0.5 \alpha \right)
  +
  \alpha |R| \\
  &=
  \frac{|L| |R|}{2} - 961.5 \alpha |L| + \frac{\alpha |R|}{2} -
  \frac{\alpha^2}{2} \\
  &<
  \frac{|L| |R|}{2} - 961 \alpha |L| \,,
\end{align*}
contradicting \eqref{eq:1/8:lots-half:eLR-}.  Thus $|X| \geq
\frac{|L|}{2} \geq \frac{n'}{4} \geq \frac{n}{16}$.  Finally, observe that
since $\alpha = \gamma n$ and $|R| \geq \frac{n}{10}$ as noted above, we
have
\begin{displaymath}
  1923.5 \alpha
  =
  1923.5 \gamma n
  \leq
  19235 \gamma |R| \,,
\end{displaymath}
and so every vertex of $X$ indeed has $Y$-degree at least $\big(
\frac{1}{2} - 20000 \gamma \big) |Y|$.  \end{proof}

We will present two proofs of Theorem \ref{thm:1/8} that every sufficiently large graph with more
than $\frac{n^2}{8}$ edges contains either a copy of $K_4$, or an
independent set of order $\Omega\big( n \cdot \frac{\log \log n}{\log n}
\big)$. The first proof is shorter. However, the second proof introduces a new twist of the Dependent Random Choice technique, which may find applications elsewhere.

The {\it odd girth} of a graph is the length of the shortest odd cycle
in the graph. Both proofs start by applying  Corollary
\ref{cor:lots-half-degrees} with $\gamma=c\frac{\log \log n}{\log n}$,
where $c>0$ is an absolute constant. In the first proof, we take
$c=10^{-6}$, and show below that the induced subgraph on $X$ has odd girth
at least $1/(40020 \gamma)$. Together with a lemma of Shearer
\cite{Sh}, which implies that every graph on $n$ vertices with odd
girth $2k+3$ has independence number at least $\frac{1}{2} n^{1-1/k}$, we obtain the desired result. Indeed, the odd girth is at least $2k+3$ with $k=10^{-5}\gamma^{-1} = 10\frac{\log n}{\log \log n}$, and hence the independence number is at least $\frac{1}{2}n^{1-1/k}=\frac{1}{2}n(\log n)^{-1/10}$.

To prove a lower bound on the odd girth of the subgraph induced on $X$, consider first a walk $v_1,\ldots,v_t$ in $X$, and let $Y_i$ denote the set of neighbors of $v_i$ in $Y$. Since consecutive vertices on the path are adjacent, Lemma \ref{lem:edge-codegree} implies that $|Y_i \cap Y_{i+1}| \leq \gamma n \leq 10\gamma |Y|$.  Roughly, since the cardinality of each $Y_i$ is almost at least half the order of $Y$, this forces $Y_i,Y_j$ to be nearly complementary if $j-i$ is odd. We next make this claim rigorous. Let $\gamma_k=(10+40020(k-1))\gamma$. We will show by induction on $k$ that
\begin{equation}\label{claimedineq} |Y_{i} \cap Y_{i+2k-1}| \leq \gamma_k |Y|.
\end{equation}
holds for each positive integer $k$.
 The base $k=1$ is satisifed as $|Y_i \cap Y_{i+1}| \leq \gamma n \leq \gamma_1 |Y|$.

The induction hypothesis is $|Y_{i} \cap Y_{i+2k-1}| \leq \gamma_k |Y|$. It follows that $$|(Y_i \cup Y_{i+2k}) \cap Y_{i+2k-1}| \leq |Y_{i} \cap Y_{i+2k-1}|+|Y_{i+2k-1} \cap Y_{i+2k}| \leq \gamma_k |Y|+\gamma n \leq \left(\gamma_k+10\gamma\right)|Y|$$ and
\begin{eqnarray*}
|Y| & \geq & |Y_i \cup Y_{i+2k} \cup Y_{i+2k-1}| \geq |Y_i \cup Y_{i+2k}|+|Y_{i+2k-1}|-|(Y_i \cup Y_{i+2k}) \cap Y_{i+2k-1}| \\ & \geq &  |Y_i \cup Y_{i+2k}|+\left(\frac{1}{2}-20000\gamma\right)|Y|- \left(\gamma_k+10\gamma\right)|Y|.
\end{eqnarray*}
We conclude that $$|Y_i \cup Y_{i+2k}| \leq \left(\frac{1}{2}+20010\gamma+\gamma_k\right)|Y|.$$
Finally, we have \begin{eqnarray*}|Y_i \cap Y_{i+2k+1}| & \leq & |Y_i \setminus Y_{i+2k}|+|Y_{i+2k} \cap Y_{i+2k+1}| =|Y_i \cup Y_{i+2k}| - |Y_{i+2k}|+|Y_{i+2k} \cap Y_{i+2k+1}| \\ & \leq & \left(\frac{1}{2}+20010\gamma+\gamma_k\right)|Y|-\left(\frac{1}{2}-20000\gamma\right)|Y|+\gamma n \leq \left(40020\gamma+\gamma_k\right)|Y| \\ & = & \gamma_{k+1}|Y|.
\end{eqnarray*}
This completes the claimed inequality (\ref{claimedineq}) by induction on $k$. Now suppose the graph has odd girth $2k-1$. So there is a closed walk of that length from a vertex to itself, in which case $Y_{i+2k-1}=Y_i$. Hence, we must have $$\left(\frac{1}{2}-20000\gamma\right)|Y| \leq |Y_i|=|Y_{i+2k-1} \cap Y_i| \leq (10+40020(k-1))\gamma |Y|$$
This implies the odd girth $2k-1$ must satisfy $2k-1 \geq 1/(40020 \gamma)$. This completes the first proof of Theorem \ref{thm:1/8}.

\medskip

We next present the second proof of Theorem \ref{thm:1/8}, starting again with Corollary \ref{cor:lots-half-degrees} . We use a twist of the Dependent Random Choice technique to find either a $K_4$ or a large independent set in $G[X \cup Y]$.  The traditional
technique is to select a random subset $T \subset Y$ by sampling $t$
vertices of $Y$ uniformly at random, with replacement, and then to define
$U \subset X$ as those vertices that are adjacent to every single vertex of
$T$.  Straightforward analysis establishes the following lemma, which was
used in this precise setting by Sudakov \cite{Su-RT} to prove a lower bound
of $n e^{-O(\sqrt{\log n})}$ for this Ramsey-Tur\'an problem.

\begin{lemma}
  [As formulated in \cite{FS-DRC}]
  For every $n$, $d$, $s$, and $k$, every $n$-vertex graph with average
  degree $d$ contains a subset $U$ of at least
  \begin{displaymath}
    \max\left\{
    \frac{d^t}{n^{t-1}} - \binom{n}{s} \left( \frac{k}{n} \right)^t
    :
    t \in \mathbb{Z}^+
    \right\}
  \end{displaymath}
  vertices, such that every subset $S \subset U$ of size $s$ has at least
  $k$ common neighbors.
  \label{lem:FS-DRC}
\end{lemma}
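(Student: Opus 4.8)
\medskip

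The plan is to run the standard dependent random choice argument, optimized over the exponent $t$. Fix a positive integer $t$, and form a random multiset $T$ by choosing $t$ vertices of $G$ independently and uniformly at random with replacement. Let $U$ be the common neighborhood of $T$, that is, the set of vertices adjacent to every vertex of $T$. First I would lower bound $\E{|U|}$: a vertex $v$ belongs to $U$ exactly when all $t$ samples land in $N(v)$, so $\pr{v \in U} = (d(v)/n)^t$, and hence $\E{|U|} = n^{-t}\sum_v d(v)^t$, where the sum is over all vertices $v$ and $d(v)$ denotes the degree of $v$. Since the average degree equals $d$, convexity of $x \mapsto x^t$ (equivalently, the power mean inequality) gives $\frac1n\sum_v d(v)^t \geq d^t$, so $\E{|U|} \geq d^t/n^{t-1}$.

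Next I would control the number of ``bad'' subsets. Call an $s$-subset $S$ of $V(G)$ \emph{bad} if its common neighborhood $N(S)$ has fewer than $k$ vertices, and let $Y$ count the bad subsets contained in $U$. A fixed bad set $S$ lies inside $U$ precisely when all $t$ samples fall in $N(S)$, an event of probability $(|N(S)|/n)^t < (k/n)^t$; summing over the at most $\binom{n}{s}$ bad sets yields $\E{Y} \leq \binom{n}{s}(k/n)^t$. By linearity of expectation, $\E{|U| - Y} \geq d^t/n^{t-1} - \binom{n}{s}(k/n)^t$, so some outcome of $T$ achieves a value of $|U| - Y$ at least this large. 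For that outcome, delete one vertex from $U$ for each bad subset it contains; the resulting set $U'$ satisfies $|U'| \geq |U| - Y$, contains no bad $s$-subset, and hence has the property that every $s$-subset of $U'$ has at least $k$ common neighbors. Since $t \in \mathbb{Z}^+$ was arbitrary, taking the maximum over $t$ gives the claimed bound (the statement being vacuous when that maximum is non-positive).

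There is no substantial obstacle here: this is by now a textbook application of dependent random choice. The only subtlety worth flagging is that one should analyze $\E{|U| - Y}$ as a single quantity rather than the two expectations separately, and that a single deletion per bad set (rather than discarding whole bad sets) suffices to clean up $U$; both points are routine. The genuine work in this section lies not in this lemma but in the surrounding setup — producing the near-perfect bipartite pair $(X,Y)$ via Corollary~\ref{cor:lots-half-degrees} — and in the new dispersion-based variant of the technique introduced afterward.
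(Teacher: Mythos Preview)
Your argument is the standard dependent random choice proof and is correct; the paper does not actually prove this lemma but merely cites it from \cite{FS-DRC}, noting that ``straightforward analysis'' establishes it. What you have written is precisely that straightforward analysis, so there is nothing to compare against.
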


The significance of this lemma is that if it is applied to $G[X \cup Y]$
with $s = 2$ and a suitably chosen $t$, one immediately finds a moderately
sized subset $U$ from which every pair of vertices has many common
neighbors.  Then, either $U$ is an independent set, or it contains an edge
$uv$.  The common neighborhood of $u$ and $v$ is now guaranteed to be
large, and it is either an independent set, or it contains an edge,
creating a $K_4$.  It is worth noting that this approach works even if the
density between $X$ and $Y$ is only bounded away from zero by an
arbitrarily small constant.  However, the lower bound on the independence
number that it gives is only $n e^{-\Theta(\sqrt{\log n})}$.

Yet one might suspect that there is room for improvement, because, for
example, if every vertex of $X$ had $Y$-degree greater than $\big(
\frac{1}{2} + 5 \gamma) |Y| \geq \frac{|Y| + \alpha}{2}$, then it is
already even guaranteed that \emph{every}\/ pair of vertices in $X$ has
common neighborhood larger than $\alpha$, finishing the argument outright.
Our minimum $Y$-degree condition is very close, at $\big( \frac{1}{2} -
20000 \gamma \big) |Y|$.

It turns out that we can indeed capitalize on this, by adjusting
the Dependent Random Choice procedure.  We will still sample $t$ vertices
of $Y$ with replacement, but this time we will place a vertex $u \in X$
into $U$ if and only if at least $\big( \frac{1}{2} + \epsilon \big)t$ of
the sampled vertices are adjacent to $u$.  Relaxing our common adjacency
requirement from $t$ to just over half of $t$ allows us to take many more
vertices into $U$.  In order to analyze this procedure, we will use the
usual Chernoff upper bounds on large Binomial deviations, but we will also
need lower bounds on Binomial tail probabilities.  The second type
guarantees ``dispersion,'' in addition to the usual ``concentration.''

\begin{lemma}
  For any constant $C > 0$, there are $\epsilon_0 > 0$ and $n_0 < \infty$
  such that the following holds for all $\epsilon < \epsilon_0$ and $n >
  n_0$:
  \begin{displaymath}
    \pr{
    \bin{n, \frac{1}{2} - C \epsilon}
    \geq
    \left( \frac{1}{2} + \epsilon \right) n }
    >
    \frac{\epsilon \sqrt{n}}{2} \cdot e^{-n \epsilon^2 (4C^2 + 20C + 16)} \,.
  \end{displaymath}
  \label{lem:dispersion}
\end{lemma}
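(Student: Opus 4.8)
The statement is a quantitative anti-concentration (``dispersion'') estimate, and the natural plan is to lower-bound the binomial tail by a block of about $\epsilon n$ consecutive point masses, each individually not too small. Write $p=\tfrac12-C\epsilon$, $q=1-p$, and set $m_0=\lceil(\tfrac12+\epsilon)n\rceil$ and $m_1=m_0+\lceil\epsilon n\rceil$. Then
\[
  \pr{\bin{n,p}\ge\big(\tfrac12+\epsilon\big)n}
  \;\ge\;
  \sum_{k=m_0}^{m_1}\binom nk p^k q^{\,n-k},
\]
and this sum has $\lceil\epsilon n\rceil+1>\epsilon n$ terms. Taking the window width to be exactly $\lceil\epsilon n\rceil+1$ integers is what produces the prefactor $\epsilon\sqrt n$, and it works uniformly for all $0<\epsilon<\epsilon_0$ — in particular in the regime $\epsilon n=O(1)$, where the exponential factor below is essentially $1$ — so no splitting into cases is needed. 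Moreover, for $\epsilon_0$ small and $n_0$ large, every $k$ in the window satisfies $\frac kn\in[\tfrac12+\epsilon,\ \tfrac12+2\epsilon+\tfrac2n]\subset(\tfrac13,\tfrac23)$, and $m_1\le n$.

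I would then lower-bound each term with Stirling's formula in the sharp form $\sqrt{2\pi\ell}(\ell/e)^\ell e^{1/(12\ell+1)}<\ell!<\sqrt{2\pi\ell}(\ell/e)^\ell e^{1/(12\ell)}$, applied to $n!$, $k!$, $(n-k)!$. Bounding the polynomial prefactor with $\frac kn\big(1-\frac kn\big)\le\frac14$ and the Stirling error with $k,n-k\ge n/3$ gives the clean inequality
\[
  \binom nk p^k q^{\,n-k}\;\ge\;\sqrt{\tfrac2{\pi n}}\;e^{-1/(2n)}\;e^{-nD(k/n\,\|\,p)},
\]
where $D(x\|p)=x\ln\frac xp+(1-x)\ln\frac{1-x}{1-p}$ is the Kullback--Leibler divergence. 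Since $D(p\|p)=0$ and $\partial_xD(x\|p)\big|_{x=p}=0$ with $\partial_x^2D=\frac1{x(1-x)}$, the integral representation $D(x\|p)=\int_p^x\!\int_p^v\frac{du\,dv}{u(1-u)}$ gives $D(x\|p)\le\tfrac12\big(\sup_{u\in[p,x]}\tfrac1{u(1-u)}\big)(x-p)^2$, which is at most $2.1\,(x-p)^2$ since $p$ and $x$ both lie within $O(\epsilon_0)+O(1/n_0)$ of $\tfrac12$. As $x-p=\big(\frac kn-\tfrac12\big)+C\epsilon\le(2+C)\epsilon+\tfrac2n$, we get $nD(k/n\|p)\le2.1(2+C)^2n\epsilon^2+O(\epsilon)+O(1/n)$.

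Combining, the tail probability is at least
\[
  \epsilon n\cdot\sqrt{\tfrac2{\pi n}}\;e^{-2.1(2+C)^2n\epsilon^2-O(\epsilon)-O(1/n)}
  \;=\;\sqrt{\tfrac2\pi}\;\epsilon\sqrt n\cdot e^{-2.1(2+C)^2n\epsilon^2}\cdot e^{-O(\epsilon)-O(1/n)}.
\]
The constant $\sqrt{2/\pi}=0.79\ldots$ exceeds the required $\tfrac12$ with room to spare, so for $\epsilon_0$ small (say $\epsilon_0\le\frac1{100(2+C)}$, which also forces $p>0$ and the containment used above) and $n_0$ large, the factor $e^{-O(\epsilon)-O(1/n)}$ stays above $0.7$, giving $\sqrt{2/\pi}\cdot e^{-O(\epsilon)-O(1/n)}\ge\tfrac12$. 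Meanwhile $2.1(2+C)^2=8.4+8.4C+2.1C^2<16+20C+4C^2$, so $e^{-2.1(2+C)^2n\epsilon^2}\ge e^{-(4C^2+20C+16)n\epsilon^2}$, and the tail is at least $\tfrac12\epsilon\sqrt n\,e^{-(4C^2+20C+16)n\epsilon^2}$, as required.

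There is no conceptual obstacle — this is essentially a local large-deviation computation for the binomial. The only real work is the careful bookkeeping: tracking the subexponential Stirling correction and the lower-order $O(\epsilon)$ and $O(1/n)$ terms, and checking that the slack built into the statement (the leading constant $\tfrac12$ against the true $\sqrt{2/\pi}$, and the exponent $4C^2+20C+16$ against the true $2.1(2+C)^2$) is comfortably enough to absorb them simultaneously over the whole range $0<\epsilon<\epsilon_0$. One should also verify the boundary conditions — $m_1\le n$, $k\ge(\tfrac12+\epsilon)n$ for every term in the window, and $k,n-k$ bounded away from $0$ — all of which follow from $\epsilon_0$ small and $n_0$ large.
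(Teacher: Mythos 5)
Your proposal is correct and follows essentially the same route as the paper: both lower-bound the tail by a window of roughly $\epsilon n$ point masses just past $\left(\tfrac12+\epsilon\right)n$ and evaluate them via Stirling, obtaining a quadratic-in-$\epsilon$ exponent that fits under $4C^2+20C+16$ (the paper bounds every term in the window by the single decreasing value $p_{(1/2+2\epsilon)n}$ and expands $\log(1\pm x)$ directly, whereas you bound each term individually via the KL divergence, but this is only a cosmetic difference in bookkeeping). The slack checks — $\sqrt{2/\pi}$ versus $\tfrac12$ and $2.1(2+C)^2$ versus $4C^2+20C+16$ — are valid for all $C>0$.
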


\begin{proof} Throughout, we will implicitly assume that $n$ is
large and $\epsilon$ is small.  Define
\begin{displaymath}
  p_i
  =
  \binom{n}{i} \left( \frac{1}{2} - C\epsilon \right)^i
  \left( \frac{1}{2} + C\epsilon \right)^{n-i} \,.
\end{displaymath}
Observe that
\begin{displaymath}
  \frac{ \binom{n}{i+1} }{ \binom{n}{i} }
  =
  \frac{ \frac{n!}{(i+1)! (n-i-1)!} }{ \frac{n!}{i! (n-i)!} }
  =
  \frac{n-i}{i+1} \,,
\end{displaymath}
and therefore
\begin{displaymath}
  \frac{p_{i+1}}{p_i}
  =
  \frac{n-i}{i+1} \cdot
  \frac{\frac{1}{2} - C \epsilon}{\frac{1}{2} + C \epsilon} \,.
\end{displaymath}
Since the following inequalities are equivalent:
\begin{align*}
  (n-i) \left( \frac{1}{2} - C \epsilon \right)
  &\leq
  (i+1) \left( \frac{1}{2} + C \epsilon \right) \\
  n \left( \frac{1}{2} - C \epsilon \right)
  - \left( \frac{1}{2} + C \epsilon \right)
  &\leq
  i \left[ \left( \frac{1}{2} + C \epsilon \right)
  + \left( \frac{1}{2} - C \epsilon \right) \right]
  = i \,,
\end{align*}
we know that in particular, $p_i$ is a decreasing sequence for all $i \geq
\frac{n}{2}$.  Hence
\begin{align*}
  \pr{
  \bin{n, \frac{1}{2} - C \epsilon}
  \geq
  \left( \frac{1}{2} + \epsilon \right) n }
  &>
  \epsilon n p_{ (\frac{1}{2} + 2\epsilon)n } \\
  &=
  \epsilon n
  \binom{n}{ (\frac{1}{2} + 2\epsilon)n }
  \left( \frac{1}{2} - C \epsilon \right)^{(\frac{1}{2} + 2\epsilon)n}
  \left( \frac{1}{2} + C \epsilon \right)^{(\frac{1}{2} - 2\epsilon)n} \,.
\end{align*}
By Stirling's formula,
\begin{displaymath}
  \binom{n}{ (\frac{1}{2} + 2\epsilon)n }
  = (1+o(1)) \frac{ \sqrt{2 \pi n} \left( \frac{n}{e} \right)^n }
  { \sqrt{2 \pi (\frac{1}{2} + 2\epsilon)n}
  \left( \frac{(\frac{1}{2} + 2\epsilon)n}{e} \right)^{(\frac{1}{2} +
  2\epsilon)n}
  \sqrt{2 \pi (\frac{1}{2} - 2\epsilon)n}
  \left( \frac{(\frac{1}{2} - 2\epsilon)n}{e} \right)^{(\frac{1}{2} -
  2\epsilon)n}
  } \,,
\end{displaymath}
so our Binomial probability is at least
\begin{displaymath}
  \mathbb{P} >
  \frac{\epsilon \sqrt{n}}{2}
  \cdot
  e^{
  n \left[
  \left( \frac{1}{2} + 2\epsilon \right) \log (1 - 2C\epsilon)
  + \left( \frac{1}{2} - 2\epsilon \right) \log (1 + 2C\epsilon)
  - \left( \frac{1}{2} + 2\epsilon \right) \log (1 + 4\epsilon)
  - \left( \frac{1}{2} - 2\epsilon \right) \log (1 - 4\epsilon)
  \right]
  } \,.
\end{displaymath}

Since $\log (1+x) \leq x$, we have
\begin{displaymath}
  \left( \frac{1}{2} + 2\epsilon \right) \log (1 + 4\epsilon)
  + \left( \frac{1}{2} - 2\epsilon \right) \log (1 - 4\epsilon)
  \leq
  \left( \frac{1}{2} + 2\epsilon \right) (4\epsilon)
  + \left( \frac{1}{2} - 2\epsilon \right) (-4\epsilon)
  \leq
  16 \epsilon^2 \,.
\end{displaymath}
Also, since $\log(1-x) > -2x$ for all sufficiently small positive $x$,
we have
\begin{align*}
  \left( \frac{1}{2} + 2\epsilon \right) \log (1 - 2C\epsilon)
  + \left( \frac{1}{2} - 2\epsilon \right) \log (1 + 2C\epsilon)
  &=
  \frac{1}{2} \log (1 - 4 C^2 \epsilon^2)
  + 2\epsilon \log \frac{1 - 2C\epsilon}{1 + 2C\epsilon} \\
  &>
  -4 C^2 \epsilon^2
  + 2\epsilon \log (1 - 5C\epsilon) \\
  &>
  -4 C^2 \epsilon^2
  - 20 C \epsilon^2 \,.
\end{align*}
This completes the proof.  \end{proof}

\medskip

We are now ready to prove that every sufficiently large graph with more
than $\frac{n^2}{8}$ edges contains either a copy of $K_4$, or an
independent set of size $\Omega\big( n \cdot \frac{\log \log n}{\log n}
\big)$.

\medskip

\begin{proof}[Proof of Theorem \ref{thm:1/8}] Define
\begin{displaymath}
  C = 2000 \,,
  \quad
  K = 4C^2 + 20C + 16 \,,
  \quad
  \gamma = \frac{\log \log n}{200 K \log n} \,,
  \quad
  t = \frac{200 K \log^2 n}{\log \log n} \,,
  \quad
  \text{and}
  \quad
  \epsilon = 10 \gamma \,.
\end{displaymath}
We will show that as long as $n$ is sufficiently large, there must be an
independent set larger than $\gamma n$.  Assume for the sake of
contradiction that there is no $K_4$ and all independent sets have size at
most $\gamma n$.  By Corollary \ref{cor:lots-half-degrees}, there are
disjoint subsets of vertices $A$ and $B$ with $|A| \geq \frac{n}{16}$, $|B|
\geq \frac{n}{10}$, where every vertex of $A$ has $B$-degree at least
$\big( \frac{1}{2} - C \epsilon \big) |B|$.

Select a random multiset $T$ of $t$ vertices of $B$ by independently
sampling $t$ vertices uniformly at random.  Let $U_0 \subset A$ be those
vertices who each have at least $\big( \frac{1}{2} + \epsilon \big) t$
neighbors in $T$.  Note that different vertices of $U_0$ are permitted to
have different neighborhoods in $T$.  Next, for each pair of vertices of
$U_0$ which has at most $\epsilon |B|$ common neighbors in $B$, remove
one of the vertices, and let $U$ be the resulting set.

We claim that $U$ must be an independent set.  Indeed, if not, then there
is an edge in $U$, whose endpoints must have more than $\epsilon |B| \geq
\gamma n$ common neighbors, and hence their common neighborhood contains an
edge, which creates a $K_4$.  It remains to show that $U$ can be large.
Define the random variable $X = |U_0|$, and let $Y$ be the number of pairs
of vertices in $U_0$ that have at most $\epsilon |B|$ common neighbors in
$B$.

We start by estimating $\E{Y}$.  Let $u, v \in A$ be a pair of vertices
whose common neighborhood in $B$ has size at most $\epsilon |B|$.  The only
way in which they could both enter $U_0$ is if both $u$ and $v$ had at
least $\big( \frac{1}{2} + \epsilon \big)t$ elements of $T$ in their
neighborhoods.  In particular, this requires that at least $2 \epsilon t$
elements of $T$ fell in their common neighborhood.  Since elements of $T$
are sampled uniformly from $B$ with replacement, the probability of this is
at most
\begin{displaymath}
  \pr{ \bin{t, \epsilon} \geq 2 \epsilon t }
  <
  e^{-\frac{1}{3} t \epsilon}
  <
  e^{-3 \gamma t} \,.
\end{displaymath}
Here, we used the well-known Chernoff bound (see, e.g., Appendix A of
the book \cite{AlSp} for a reference).  Therefore, by linearity of
expectation,
\begin{displaymath}
  \E{Y}
  <
  n^2 e^{-3 \gamma t}
  =
  \frac{1}{n} \,.
\end{displaymath}

Next, we move to estimate $\E{X}$.  Since every vertex of $A$ has
$B$-degree at least $\big( \frac{1}{2} - C \epsilon \big) |B|$, the
probability that a particular vertex of $A$ is selected for $U_0$ is at
least
\begin{displaymath}
  \pr{ \bin{t, \frac{1}{2} - C \epsilon}
  \geq \left( \frac{1}{2} + \epsilon \right)t }
  >
  \frac{\epsilon \sqrt{t}}{2} \cdot e^{-t \epsilon^2 K}
  =
  \frac{\epsilon \sqrt{t}}{2} \cdot e^{-100 K \gamma^2 t}
  \,,
\end{displaymath}
by Lemma \ref{lem:dispersion}.  As $|A| \geq \frac{n}{16}$, linearity of
expectation gives
\begin{displaymath}
  \E{X}
  >
  \frac{n}{16} \cdot \frac{\epsilon \sqrt{t}}{2} \cdot e^{-100 K \gamma^2 t}
  =
  \frac{n}{16} \cdot 5 \gamma
  \cdot \sqrt{\frac{200 K \log^2 n}{\log \log n}}
  \cdot
  e^{-\frac{1}{2} \log \log n} \,,
\end{displaymath}
which has higher order than $\gamma n$.  Therefore, a final application of
linearity of expectation gives $\E{X - Y} > \gamma n$, and hence there is
an outcome of our random sampling which produces $|U| \geq X - Y > \gamma
n$, so $U$ is too large to be an independent set, a contradiction.
\end{proof}

\section{Quantitative bounds on the Bollob\'as-Erd\H{o}s construction}\label{sectBEgraph}

Recall that Bollob\'as and Erd\H{o}s \cite{BoEr-RT-construct} constructed a
$K_4$-free graph on $n$ vertices with $(1-o(1))\frac{n^2}{8}$ edges with
independence number $o(n)$. The various presentations of the proof of this
result in the literature \cite{BL}, \cite{BL2}, \cite{BoEr-RT-construct},
\cite{EHSSS}, \cite{EHSSS97}, \cite{SS01} do not give quantitative
estimates on the little-$o$ terms. In this section, we present the proof
with quantitative estimates. It shows that the Bollob\'as-Erd\H{o}s graph gives a good lower bound for the Ramsey-Tur\'an numbers in the lower part of the critical window, nearly matching the upper bounds established using dependent random choice. The presentation here closely follows the proof sketched in \cite{SS01}. The next result is the main theorem of this section, which gives the quantitative estimates for the Bollob\'as-Erd\H{o}s construction. Call a graph $G=(V,E)$ on $n$ vertices {\it nice} if it is $K_4$-free and there is a bipartition $V=X \cup Y$ into parts of order $n/2$ such that each part is $K_3$-free.

\begin{theorem} \label{thm:BE}
  There exists some universal constant $C > 0$ such that for every $0 < \e
  < 1$, positive integer $h \geq 16$ and even integer $n \geq
  (C\sqrt{h}/\e)^h$, there exists a nice graph on $n$ vertices, with
  independence number at most $2n e^{-\e \sqrt h / 4}$, and minimum degree
  at least $(1/4 - 2\e) n$.
\end{theorem}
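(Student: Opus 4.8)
The plan is to construct the Bollob\'as-Erd\H{o}s graph explicitly as two discretized Borsuk graphs joined by a dense bipartite graph, and then verify each of the claimed properties by estimating the relevant geometric quantities on the high-dimensional sphere. Fix $d = h - 1$ (so we work on the unit sphere $\SS^d \subset \RR^{d+1}$), and choose a small angular parameter $\mu = \mu(\e)$ on the order of $\e/\sqrt{h}$. The vertex set will be $X \cup Y$, where $X$ and $Y$ are each sets of $n/2$ points, and I would obtain these points by partitioning $\SS^d$ into small cells (a ``discretization'' at scale roughly $\mu$), taking one representative point per cell, and then duplicating cells as needed so that each cell carries the same number of vertices and $|X| = |Y| = n/2$; the constraint $n \geq (C\sqrt{h}/\e)^h$ is exactly what guarantees there are enough cells (the number of caps of radius $\sim \mu$ needed to cover $\SS^d$ is $(O(1/\mu))^d = (O(\sqrt h/\e))^{h-1}$). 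Within $X$, join two vertices if the angle between their points is at least $\pi - \mu$ (nearly antipodal); do the same within $Y$ using an independent copy of the sphere; and between $X$ and $Y$, join a vertex of $X$ to a vertex of $Y$ if the angle between their sphere points is at most $\pi/2 - \mu$ — wait, the standard construction uses a Euclidean-distance threshold: join $x\in X$ to $y\in Y$ when $\|x - y\| \le \sqrt{2} - \delta$ for an appropriate $\delta \asymp \mu$ in the two independent sphere copies, arranged so that ``$x$ close to $y$'' captures a $(1/2 - O(\e))$-fraction of $Y$ for each $x$.

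The verification then has four parts. First, $K_3$-freeness of each part: a triangle inside $X$ would require three points on $\SS^d$ that are pairwise at angle $\ge \pi - \mu$, which is geometrically impossible once $\mu < \pi/3$, since three pairwise nearly-antipodal unit vectors cannot exist (their pairwise inner products would all be close to $-1$, forcing a negative-definite Gram matrix contradiction). Second, $K_4$-freeness of the whole graph: since each part is triangle-free, any $K_4$ must have two vertices in $X$ and two in $Y$; the two in $X$ are nearly antipodal, the two in $Y$ are nearly antipodal, and all four cross pairs are ``close'' in the bipartite sense — I would show this is impossible by the same kind of inner-product/triangle-inequality computation that underlies the Bollob\'as-Erd\H{o}s argument (if $x, x'$ are antipodal and both close to $y$, then $y$ is close to $x$ and close to $-x$, impossible). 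Third, the minimum degree bound: a vertex $x \in X$ has $X$-degree equal to the number of vertices in the antipodal cap of angular radius $\mu$, which is a negligible $o(n)$, but its $Y$-degree is the number of $Y$-vertices within Euclidean distance $\sqrt 2 - \delta$, and by the measure concentration / isoperimetric estimate on the sphere this cap has normalized measure $1/2 - O(\sqrt d \cdot \delta) = 1/2 - O(\e)$, so the degree is at least $(1/4 - 2\e)n$ after accounting for $|Y| = n/2$. Fourth, the independence number: this is the key step and where the isoperimetric inequality for $\SS^d$ really enters. An independent set $I$ meets $X$ in a set $I_X$ no two of whose points are nearly antipodal (so $I_X$, viewed on the sphere, has the property that $I_X$ and its antipode $-I_X$ are at angular distance $> \mu$ apart) and similarly $I_Y$, and moreover $I_X$ and $I_Y$ are ``far'' in the cross-metric. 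Spherical isoperimetry (in the form: a set whose $\mu$-neighborhood misses its own antipodal reflection must have measure at most $e^{-\Omega(\mu^2 d)}$) bounds $|I_X|, |I_Y| \le (n/2) \cdot e^{-\Omega(\mu^2 d)} = (n/2)e^{-\Omega(\e^2 h)}$; optimizing the constants in the discretization to make the exponent come out to $\e\sqrt h/4$ (via the precise relation $\mu \asymp \e/\sqrt h$ together with the $h^{1/2}$ versus $h$ bookkeeping) yields the stated $2n e^{-\e\sqrt h/4}$.

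The main obstacle I anticipate is bookkeeping the discretization errors so that all four properties hold \emph{simultaneously} with a single choice of scale $\mu = \mu(\e, h)$: the $K_4$-freeness wants $\mu$ not too large, the degree bound wants the loss $O(\sqrt d \, \mu)$ to be at most $2\e$, and the independence-number bound wants $\mu$ as large as possible (to make $e^{-\Omega(\mu^2 d)}$ small), and one must check these windows overlap for all $h \ge 16$ — this is exactly why the hypothesis has the form $n \ge (C\sqrt h/\e)^h$ and why the final exponent is $\e\sqrt h/4$ rather than something cleaner. A secondary technical point is making the ``discretize and then duplicate to equalize cell sizes'' step rigorous without the duplication destroying $K_4$-freeness (duplicated vertices are non-adjacent twins, so a $K_4$ among them would need $\ge 3$ distinct underlying cell-points anyway, reducing to the geometric case) or inflating the independence number (a large independent set using many copies of one cell still corresponds to that single cell-point, so it does not help). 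Once the scale is pinned down, each of the four verifications is a short geometric estimate; I would present the isoperimetric input as a black-box lemma about caps on $\SS^d$ and cite the standard L\'evy/Borsuk form, then plug in.
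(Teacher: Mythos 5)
Your construction and overall plan coincide with the paper's: an equal\-/measure small\-/diameter partition of $\SS^{h-1}$ at scale $\mu=\e/\sqrt h$ (the paper gets this directly from Feige--Schechtman, which is exactly where the hypothesis $n\ge (C\sqrt h/\e)^h$ is consumed, and which lets you avoid the duplication device entirely), the same three edge rules, the same Gram/parallelogram identities for $K_3$- and $K_4$-freeness, a cap-measure lower bound for the minimum degree, and spherical isoperimetry for the independence number. However, there is a genuine quantitative error in the independence-number step, which is the heart of the theorem. You translate the independence condition ``no two points of $I_X$ at Euclidean distance $>2-\mu$'' into ``$I_X$ and $-I_X$ are at angular distance $>\mu$'' and then invoke concentration to get measure at most $e^{-\Omega(\mu^2 d)}$. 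With $\mu=\e/\sqrt h$ and $d=h-1$ this is $e^{-\Omega(\e^2)}$ --- a constant, which bounds nothing (and your subsequent ``$=e^{-\Omega(\e^2h)}$'' does not follow from $\mu^2d$ either). The correct translation: if $|x-x'|\le 2-\mu$ then $|x+x'|^2=4-|x-x'|^2\ge 4\mu-\mu^2$, so $x$ and $-x'$ are at distance $\ge\sqrt{3\mu}$; the separation of $I_X$ from its antipodal reflection is $\Theta(\sqrt\mu)$, not $\Theta(\mu)$. Feeding $t\asymp\sqrt\mu$ into the isoperimetric bound gives measure $e^{-\Omega(\mu h)}=e^{-\Omega(\e\sqrt h)}$, which is precisely why the theorem's exponent is $\e\sqrt h/4$. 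Equivalently (as the paper does via Schmidt's theorem), a set of Euclidean diameter $2-\mu$ lies in a cap of height $\approx\sqrt\mu$ and hence has measure at most $2e^{-\mu h/2}$. Without this correction your argument yields no nontrivial independence-number bound, so the step as written fails; with it, the rest of your outline goes through.

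A secondary caution: your claim that duplicating cells ``does not help'' an independent set is false as stated --- all copies of a single cell-point are pairwise non-adjacent, so an unbalanced duplication could by itself create a huge independent set. You need the duplication multiplicities to be (essentially) proportional to cell measures so that the measure bound transfers to a vertex count; the equal-measure partition of Feige--Schechtman makes this automatic.
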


This graph comes from the Bollob\'as-Erd\H{o}s construction, which we
describe now. Let $\mu = \e/\sqrt{h}$. Feige and Schechtman \cite{FS02}
show that, for every even integer $n \geq (C/\mu)^h$, the unit sphere
$\SS^{h-1}$ in $\RR^h$ can be partitioned into  $n/2$ pieces $D_1, \dots,
D_{n/2}$ of equal measure so that each piece has diameter at most
$\mu/4$.\footnote{Lemma 21 in their paper states this for a single value of
$n_0$, but their proof actually shows it for all $n \geq n_0$.}
Choose a vertex $x_i \in D_i$ and an $y_i \in D_i$ for each $i$. Let $X =
\{x_1, \dots, x_{n/2}\}$ and $Y = \{y_1, \dots, y_{n/2}\}$. Construct the
graph $BE(n, h, \e)$ on vertex set $X \cup Y$ as follows:
\begin{enumerate}[(a)]
\item Join $x_i \in X$ to $y_j \in Y$ if $|x_i - y_j| < \sqrt 2 -
  \mu$.
\item Join $x_i \in X$ to $x_j \in X$ if $|x_i - x_j| > 2- \mu$.
  \item Join $y_i \in Y$ to $y_j \in Y$ if $|y_i - y_j| > 2 - \mu$.
\end{enumerate}

Theorem~\ref{thm:BE} then follows from the next four claims.

\begin{claim} \label{cl:BE-K3}
  The subsets $X$ and $Y$ both induce triangle-free subgraphs in
  $BE(n,h,\e)$.
\end{claim}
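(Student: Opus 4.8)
The plan is to argue by contradiction, exploiting the elementary geometric fact that three unit vectors cannot be pairwise nearly antipodal. Suppose $x_i, x_j, x_k \in X$ span a triangle in $BE(n,h,\e)$. By the definition of the edges inside $X$, each of the three pairwise Euclidean distances exceeds $2-\mu$, where $\mu = \e/\sqrt h$. (The same definition is used for edges inside $Y$, so once the claim is proved for $X$ it follows verbatim for $Y$.)

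First I would translate the distance condition into an inner-product condition. Since $x_i,x_j,x_k$ lie on the unit sphere $\SS^{h-1}$, we have $|x_i-x_j|^2 = 2 - 2\langle x_i,x_j\rangle$, and likewise for the other pairs. As $\mu < 2$, both sides of $|x_i-x_j| > 2-\mu$ are positive, so squaring gives $2 - 2\langle x_i,x_j\rangle > (2-\mu)^2 = 4 - 4\mu + \mu^2$, hence $\langle x_i,x_j\rangle < -1 + 2\mu - \tfrac{\mu^2}{2} < -1 + 2\mu$, and similarly $\langle x_j,x_k\rangle, \langle x_i,x_k\rangle < -1 + 2\mu$.

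Next I would expand the squared norm of $x_i + x_j + x_k$. Using $|x_i| = |x_j| = |x_k| = 1$,
\[
0 \;\le\; |x_i + x_j + x_k|^2 \;=\; 3 + 2\big(\langle x_i,x_j\rangle + \langle x_j,x_k\rangle + \langle x_i,x_k\rangle\big) \;<\; 3 + 2\cdot 3\cdot(-1 + 2\mu) \;=\; -3 + 12\mu.
\]
Since $h \ge 16$ and $0 < \e < 1$ give $\mu = \e/\sqrt h < 1/4$, the right-hand side is negative, a contradiction. Hence the subgraph induced on $X$ contains no triangle, and the identical computation for $Y$ completes the proof.

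There is essentially no obstacle here: the argument is a short computation, and the only points requiring minor care are verifying that $\mu < 1/4$ from the stated hypotheses on $h$ and $\e$ (so that $-3 + 12\mu < 0$) and that squaring the distance inequality is valid because both sides are positive. Conceptually this is just the statement that the sphere carries no ``equilateral nearly-antipodal triple,'' which is the same reason the Borsuk graph underlying the construction is triangle-free.
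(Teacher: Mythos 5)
Your proof is correct and is essentially the paper's argument: both derive a contradiction from $0 \le |x_i+x_j+x_k|^2$ together with the three pairwise distances exceeding $2-\mu$, the only cosmetic difference being that you pass through inner products while the paper uses the identity $|x_i+x_j+x_k|^2 = 9 - \sum |x_a-x_b|^2$ directly. Your numerical check ($\mu < 1/4$ from $h \ge 16$, $\e < 1$) is valid, if marginally less slack than the paper's threshold $\mu < 2-\sqrt{3}$.
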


\begin{claim} \label{cl:BE-K4}
  The graph $BE(n,h,\e)$ is $K_4$-free.
\end{claim}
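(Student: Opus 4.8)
The plan is to show that $BE(n,h,\e)$ contains no $K_4$ by a case analysis on how the four vertices of a putative clique distribute between $X$ and $Y$. Since both $X$ and $Y$ induce triangle-free subgraphs (Claim~\ref{cl:BE-K3}), any $K_4$ must have at least two vertices on each side; so up to symmetry the only case is exactly two vertices in $X$, say $x_i, x_j$, and two in $Y$, say $y_k, y_\ell$. First I would record the four defining inequalities: $|x_i - x_j| > 2 - \mu$, $|y_k - y_\ell| > 2 - \mu$, and all four cross-distances $|x_i - y_k|, |x_i - y_\ell|, |x_j - y_k|, |x_j - y_\ell| < \sqrt 2 - \mu$. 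All points lie on the unit sphere $\SS^{h-1}$, so $|u - v|^2 = 2 - 2\langle u, v\rangle$; thus $|x_i - x_j| > 2 - \mu$ translates to $\langle x_i, x_j \rangle < -1 + \mu - \mu^2/2 < -1 + \mu$, i.e. $x_i$ and $x_j$ are nearly antipodal, and likewise $y_k, y_\ell$ nearly antipodal, while each cross pair has inner product $> 1 - \sqrt2\,\mu + \mu^2/2 > 0$ — roughly, each $x$ is close to being orthogonal-or-better to each $y$, but crucially has \emph{positive} correlation with both.

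The key geometric contradiction comes from the near-antipodality on one side clashing with positive correlation to both vertices on the other side. Concretely, write $x_j = -x_i + w$ where $\|w\|$ is small (of order $\sqrt\mu$): from $\langle x_i, x_j\rangle < -1 + \mu$ and $|x_i| = |x_j| = 1$ one gets $\|x_i + x_j\|^2 = 2 + 2\langle x_i, x_j\rangle < 2\mu$, so $\|x_i + x_j\| < \sqrt{2\mu}$. Then for either $y \in \{y_k, y_\ell\}$,
\[
\langle x_i, y\rangle + \langle x_j, y\rangle = \langle x_i + x_j, y\rangle \leq \|x_i + x_j\| \cdot \|y\| < \sqrt{2\mu}.
\]
But each of $\langle x_i, y\rangle$ and $\langle x_j, y\rangle$ exceeds $1 - \sqrt2\,\mu + \mu^2/2$, so the left side exceeds $2(1 - \sqrt 2\,\mu) = 2 - 2\sqrt2\,\mu$, which for small $\mu$ (certainly $\mu < 1/10$, which holds since $\mu = \e/\sqrt h \le 1/4$ for $h \ge 16$) is far larger than $\sqrt{2\mu}$. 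This is the desired contradiction, so no such $K_4$ exists.

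I would then just need to handle the bookkeeping: verify that the translation from Euclidean distance inequalities to inner-product inequalities is done with the correct direction of each inequality, and check the final numerical comparison $2 - 2\sqrt 2\, \mu > \sqrt{2\mu}$ holds for all admissible $\mu$ (equivalently $\mu$ bounded away from a constant like $0.34$, which is guaranteed by $h \geq 16$, $\e < 1$). The main obstacle is really just making sure the case reduction is airtight — in particular that the triangle-free claim genuinely forces the $2$-$2$ split and that one cannot evade the argument by, say, having three vertices in $X$ (ruled out by Claim~\ref{cl:BE-K3}) — after which the geometry is a one-line estimate using Cauchy–Schwarz on $\SS^{h-1}$. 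No high-dimensional isoperimetry is needed here; that ingredient is reserved for the independence-number claim, not for $K_4$-freeness.
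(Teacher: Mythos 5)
There is a genuine error in your translation of the cross-edge condition to inner products, and it is fatal to your argument. On the unit sphere, $\langle x,y\rangle = 1 - \tfrac{1}{2}|x-y|^2$, so $|x-y| < \sqrt2 - \mu$ gives $\langle x,y\rangle > 1 - \tfrac12(\sqrt2-\mu)^2 = \sqrt2\,\mu - \mu^2/2$, i.e.\ the cross pairs are \emph{nearly orthogonal} (inner product barely above $0$), not nearly parallel with inner product close to $1$ as you claim. (There is also a smaller slip on the antipodal side: $|x_i-x_j|>2-\mu$ gives $\langle x_i,x_j\rangle < -1 + 2\mu - \mu^2/2$ and hence $\|x_i+x_j\|^2 < 4\mu - \mu^2$, not $2\mu$; that one is harmless.) With the correct bounds, your comparison becomes $\langle x_i+x_j, y\rangle > 2\sqrt2\,\mu - \mu^2$ versus $\langle x_i+x_j,y\rangle \le \|x_i+x_j\| < 2\sqrt{\mu}$, and since $2\sqrt2\,\mu \ll 2\sqrt\mu$ for small $\mu$ there is no contradiction at all. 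Indeed, one antipodal pair plus four near-orthogonality constraints is genuinely consistent: take $y$ orthogonal to the span of $x_i,x_j$ and tilt it by an angle of order $\sqrt\mu$ toward the short vector $x_i+x_j$; this realizes all five constraints, so no argument using only these five edges can succeed.

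The point you are missing is that the contradiction requires \emph{both} near-antipodal pairs simultaneously; the margins are all of order $\mu$ and must be balanced against each other. The paper does this with the single identity
\[
0 \le |x+x'-y-y'|^2 = \sum_{\text{cross}}|x-y|^2 - |x-x'|^2 - |y-y'|^2 < 4(\sqrt2-\mu)^2 - 2(2-\mu)^2 = (2\mu + 8 - 8\sqrt2)\mu < 0,
\]
where the negativity comes from $8 - 8\sqrt2 < 0$ dominating the $2\mu$ term. Your case reduction to the $2$--$2$ split via Claim~\ref{cl:BE-K3} is correct and matches the paper; it is only the geometric step that needs to be replaced by an argument using all six pairwise distances.
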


\begin{claim} \label{cl:BE-ind}
  The independence number of the graph $BE(n,h,\e)$ is at most $2ne^{-\e\sqrt{h}/4}$.
\end{claim}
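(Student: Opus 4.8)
The plan is to bound, separately, the largest independent set contained in $X$ and the largest one contained in $Y$; only the edge rules (b) and (c) are needed, and the rule (a) between $X$ and $Y$ plays no role. Let $I$ be an independent set of $BE(n,h,\e)$ and set $I_X = I \cap X$, $I_Y = I \cap Y$, so $|I| = |I_X| + |I_Y|$. By the symmetry of the construction it suffices to show $|I_X| \le n e^{-\e\sqrt h/4}$ (and identically for $I_Y$), which gives $|I| \le 2n e^{-\e\sqrt h/4}$. By rule (b), any two vertices of $I_X$ are non-adjacent, hence at Euclidean distance at most $2-\mu$, so $\{x_i : x_i \in I_X\}$ is a subset of $\SS^{h-1}$ of diameter at most $2-\mu$.

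The heart of the argument is the geometric fact that a subset of $\SS^{h-1}$ whose diameter is bounded away from the diameter $2$ of the sphere occupies only an exponentially small fraction of the sphere. To apply this to the discrete set $I_X$, I would first thicken it: put $A = \bigcup_{x_i \in I_X} D_i$. Since the $D_i$ partition $\SS^{h-1}$ into $n/2$ pieces of equal measure, $A$ occupies exactly a $\tfrac{|I_X|}{n/2}$-fraction of $\SS^{h-1}$; and since each $D_i$ has diameter at most $\mu/4$, the triangle inequality gives $\mathrm{diam}(A) \le (2-\mu) + 2\cdot\tfrac\mu4 = 2 - \tfrac\mu2$. For unit vectors $a, a' \in A$ this yields $|a+a'|^2 = 4 - |a-a'|^2 \ge 4 - (2-\tfrac\mu2)^2 = 2\mu - \tfrac{\mu^2}{4}$, so, writing $s = \sqrt{2\mu - \mu^2/4}$, the reflected set $-A$ lies at Euclidean distance at least $s$ from $A$. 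Hence the open $\tfrac s2$-neighborhoods of $A$ and of $-A$ are disjoint, and since $-A$ is the mirror image of $A$ they cover equal fractions; therefore the $\tfrac s2$-neighborhood of $A$ covers at most half of $\SS^{h-1}$.

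Now I would invoke the spherical isoperimetric inequality (Lévy): among all subsets of a given measure, a spherical cap minimizes the measure of its $r$-neighborhood. So the cap $C$ of the same measure as $A$ also has its $\tfrac s2$-neighborhood covering at most half of $\SS^{h-1}$; converting the Euclidean radius $\tfrac s2$ into the equivalent geodesic radius $2\arcsin(s/4)\ge s/2$, this says the cap of angular radius $(\text{radius of }C) + s/2$ covers at most half, forcing $C$, and hence $A$, to have measure at most that of a cap of angular radius $\tfrac\pi2 - \tfrac s2$. Finally, the fraction of $\SS^{h-1}$ lying in a cap of angular radius $\tfrac\pi2 - t$ equals $\pr{\langle x, u\rangle \ge \sin t}$ for $x$ uniform on $\SS^{h-1}$ and $u$ a fixed unit vector, which standard spherical-cap estimates bound by $e^{-(h-1)(\sin t)^2/2}$ (up to a negligible polynomial factor). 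Since $h$ is large, $t = \tfrac s2 = \Theta(\sqrt\mu) = \Theta(\sqrt\e\,h^{-1/4})$ is small, so $\sin t = (1-o(1))t$ and, with $\mu = \e/\sqrt h$, one gets $(h-1)(\sin t)^2/2 = (1-o(1))\e\sqrt h/4$. Thus $|I_X| \le (n/2)\,e^{-(1-o(1))\e\sqrt h/4}$, likewise for $I_Y$, and $|I|\le n\,e^{-(1-o(1))\e\sqrt h/4}\le 2n\,e^{-\e\sqrt h/4}$; the factor $2$ in the statement (rather than $1$) is precisely what absorbs these lower-order losses, and when $\e\sqrt h$ is bounded the bound is near-trivial anyway, so only large $\e\sqrt h$ matters.

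The main obstacle is the geometric claim that small-diameter subsets of the sphere have exponentially small measure: it genuinely uses the spherical isoperimetric inequality, since an elementary argument (for instance, observing that a set of diameter $<2$ merely avoids a tiny cap around its own antipode) gives nothing better than ``measure less than the whole sphere.'' Everything else — the thickening bookkeeping, the Euclidean-to-geodesic conversion, and the constant-chasing needed to land the exponent at exactly $\e\sqrt h/4$ — is routine, though it should be done carefully to obtain the clean constant stated.
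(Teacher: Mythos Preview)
Your proposal is correct and follows the same skeleton as the paper: reduce to independent sets inside $X$ (resp.\ $Y$), thicken to the union of the regions $D_i$, bound the diameter of the thickened set by $2-\mu/2$ via the triangle inequality, and then invoke spherical isoperimetry plus a cap estimate to get the exponential bound. The paper quotes Schmidt's theorem directly (a cap maximizes measure among subsets of a given diameter) and packages the cap estimate as a black-box corollary, yielding $\lambda(D_I)\le 2e^{-(\mu/2)h/2}=2e^{-\e\sqrt h/4}$ with no $(1-o(1))$ to chase. Your route instead deduces this from L\'evy's neighborhood form of isoperimetry via the $A$ vs.\ $-A$ separation trick, which is essentially a re-derivation of Schmidt's bound; it works, but leaves more constants to track (the $(h-1)$ vs.\ $h$, the $\sin t$ vs.\ $t$, the polynomial prefactor in the cap estimate). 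Both approaches are standard and equivalent in strength; the paper's is cleaner here because it cites the diameter version off the shelf, while yours is more self-contained if one already knows L\'evy's inequality.
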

\begin{claim} \label{cl:BE-deg}
  The minimum degree of the graph $BE(n,h,\e)$ is at least $(1/4 - 2\e)n$.
\end{claim}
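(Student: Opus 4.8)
The plan is to lower bound the degree of an arbitrary vertex of $X$; by the symmetry of rules (a)--(c) under swapping $X$ and $Y$, the same bound will then hold for vertices of $Y$, giving the claim. Fix $x_i \in X$, and count only its neighbors in $Y$, since the edges inside $X$ can only increase the degree. The crucial feature is that the pieces $D_1,\dots,D_{n/2}$ are tiny: each has diameter at most $\mu/4$, where $\mu = \e/\sqrt h$. Hence for $z \in D_j$ we have $\big|\,|x_i - y_j| - |x_i - z|\,\big| \le |y_j - z| \le \mu/4$, so whenever $D_j$ contains a point $z$ with $|x_i - z| < \sqrt 2 - \tfrac54\mu$, we get $|x_i - y_j| < \sqrt 2 - \mu$ and $x_i$ is joined to $y_j$ by rule (a). Let $\nu$ be the uniform probability measure on $\SS^{h-1}$ and $A = \{z \in \SS^{h-1} : |x_i - z| < \sqrt 2 - \tfrac54\mu\}$. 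Since the $D_j$ partition $\SS^{h-1}$ into pieces of equal measure $\tfrac2n$, and $A$ is contained in the union of those $D_j$ that meet it, the number of indices $j$ with $D_j \cap A \neq \emptyset$ is at least $\tfrac n2\,\nu(A)$. Therefore $\deg(x_i) \ge \tfrac n2\,\nu(A)$.

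It remains to show $\nu(A) > \tfrac12 - \e$. Squaring the defining inequality and using $|x_i - z|^2 = 2 - 2\langle x_i, z\rangle$ on the unit sphere, $A$ is exactly the spherical cap $\{z : \langle x_i, z\rangle > t_0\}$ with $t_0 = \tfrac{5\sqrt2}{4}\mu - \tfrac{25}{32}\mu^2 < 2\mu$ (using $\mu \le 1/4$, which holds since $\e < 1$ and $h \ge 16$). If $Z$ is $\nu$-distributed then $\langle x_i, Z\rangle$ has density $f(s) = c_h(1-s^2)^{(h-3)/2}$ on $(-1,1)$, where $c_h = \Gamma(h/2)\big/\big(\sqrt\pi\,\Gamma((h-1)/2)\big)$; since $h \ge 16$ this density attains its maximum at $s=0$. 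Hence
\[
  \nu(A) = \pr{\langle x_i, Z\rangle > t_0}
  = \tfrac12 - \pr{0 < \langle x_i, Z\rangle < t_0}
  \ge \tfrac12 - t_0\,c_h .
\]
By log-convexity of the Gamma function, $\Gamma(h/2) = \Gamma\big(\tfrac{h-1}{2} + \tfrac12\big) \le \big(\tfrac{h-1}{2}\big)^{1/2}\Gamma\big(\tfrac{h-1}{2}\big)$, so $c_h \le \sqrt{h/(2\pi)}$. Consequently $t_0 c_h < 2\mu\sqrt{h/(2\pi)} = \tfrac{2\e}{\sqrt{2\pi}} < \e$, which gives $\nu(A) > \tfrac12 - \e$, and therefore $\deg(x_i) > \big(\tfrac14 - \tfrac\e2\big)n \ge \big(\tfrac14 - 2\e\big)n$.

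I expect the only mildly delicate point to be the spherical-cap estimate: controlling the maximum of the one-coordinate marginal density $f$, which reduces to the clean bound $c_h \le \sqrt{h/(2\pi)}$. Everything else is routine triangle-inequality bookkeeping exploiting that the pieces $D_j$ have diameter $O(\mu)$. Note also that the degree bound obtained, roughly $\big(\tfrac14 - \tfrac\e2\big)n$, is comfortably stronger than the claimed $\big(\tfrac14 - 2\e\big)n$, leaving ample slack for the approximations; in particular there is no need to also count the sparse $X$--$X$ (resp.\ $Y$--$Y$) edges.
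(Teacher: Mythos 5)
Your argument is correct, and its overall skeleton matches the paper's: lower bound the degree of $x_i$ by the number of pieces $D_j$ meeting a spherical cap of radius $\sqrt 2 - O(\mu)$ about $x_i$, use $\mathrm{diam}(D_j)\le \mu/4$ and the triangle inequality to see that each such piece supplies a neighbor $y_j$, and convert the cap's measure into a count of pieces since each $D_j$ has normalized measure $2/n$. The one place where you genuinely diverge is the lower bound on the cap measure. The paper's Lemma~\ref{lem:cap-upper} identifies the cap's surface measure with a ratio of a cone volume to the ball volume and then bounds the slab $\{0\le x_1\le\delta\}$ by comparing $\int_0^{\delta}(1-x^2)^{(h-1)/2}\,dx\le\delta$ against $\int_{-1}^{1}(1-x^2)^{(h-1)/2}\,dx>1/\sqrt h$. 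You instead work directly with the exact one-coordinate marginal density $c_h(1-s^2)^{(h-3)/2}$ of the uniform measure on $\SS^{h-1}$ and bound its maximum via $c_h\le\sqrt{h/(2\pi)}$, which follows from log-convexity of $\Gamma$. Both are one-dimensional integral estimates of the band near the equator, and both leave ample slack; your version is slightly sharper (you get degree at least $(\tfrac14-\tfrac\e2)n$, and you also use the tighter cap radius $\sqrt2-\tfrac54\mu$ rather than the paper's $\sqrt2-2\mu$), while the paper's is more elementary in that it avoids Gamma-function identities. The bookkeeping steps (converting measure to a count of pieces, the symmetry between $X$ and $Y$, and the reduction $\langle x_i,z\rangle>t_0$ with $t_0<2\mu$) all check out.
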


\begin{proof}
  [Proof of Claim~\ref{cl:BE-K3}]
  Suppose $x_i, x_j, x_k \in X$ form a triangle. Then
  \[
  0 \leq |x_i + x_j + x_k|^2 = 9 - |x_i - x_j|^2 - |x_i - x_k|^2 -
  |x_j - x_k|^2 < 9 - 3(2-\mu)^2 < 0 \, ,
  \]
 which is a contradiction. So the subgraph induced by $X$ is
 triangle-free, and similarly with $Y$.
\end{proof}

\begin{proof}
  [Proof of Claim~\ref{cl:BE-K4}]
  By Claim~\ref{cl:BE-K3}, any $K_4$ must come from four vertices
  $x,x' \in X$ and $y,y'\in Y$, and if they do form a $K_4$, then
  \begin{align*}
  0 &\leq |x + x' - y - y'|^2
  \\
  & =
  |x - y|^2 + |x - y'|^2 + |x' - y|^2 + |x' - y'|^2 - |x - x'|^2 - |y
  - y'|^2
  \\
  & <
  4(\sqrt 2 - \mu)^2 - 2(2 - \mu)^2
  \\
  &= (2\mu + 8 - 8\sqrt 2) \mu
  \\
  &< 0 \, ,
\end{align*}
which is impossible.
\end{proof}

The following isoperimetric theorem on the sphere shows that, of all
subsets of the sphere of a given diameter, the cap has the largest measure.
It plays a crucial role in the proof, as any independent set which is a
subset of $X$ or $Y$ will have diameter at most $2-\mu$. For a measurable subset $A \subset
  \SS^{h-1}$, let $\lambda(A)$ denote the Lebesgue measure of $A$ normalized so that $\lambda(\SS^{h-1})=1$.

\begin{theorem} [Schmidt \cite{S48}, see also \cite{SS01}]
  Let $\ell \in [0,2]$ and $h$ be a positive integer. If $A \subset
  \SS^{h-1}$ is an arbitrary measurable set with diameter at
  most $\ell$ and $B$ a spherical cap in $\SS^{h-1}$ with diameter
  $\ell$, then $\lambda(A) \leq \lambda(B)$.
\end{theorem}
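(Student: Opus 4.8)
The plan is to use two-point symmetrization (polarization) on the sphere. Fix a unit vector $v\in\SS^{h-1}$, let $H_v=\{x:\langle x,v\rangle=0\}$, let $H_v^+=\{x:\langle x,v\rangle>0\}$, and let $\sigma_v$ be the orthogonal reflection through $H_v$. The polarization $A^v$ of a measurable set $A\subset\SS^{h-1}$ is defined orbit by orbit on the pairs $\{x,\sigma_v x\}$: if $A$ contains both, neither, or exactly one of $x,\sigma_v x$, then $A^v$ contains, respectively, both, neither, or the one of the two lying in $H_v^+$ (a routine check gives measurability of $A^v$). Two facts are then essentially immediate. First, $\lambda(A^v)=\lambda(A)$, because on each orbit the number of retained points is unchanged. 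Second, $\operatorname{diam}(A^v)\le\operatorname{diam}(A)$: this reduces to the elementary identity $|a-b|^2-|a-\sigma_v b|^2=-4\langle a,v\rangle\langle b,v\rangle$, which says that moving one of two points across $H_v$ only increases their distance when they started on the same closed side; a short case analysis over whether each of two points of $A^v$ came from $A$ directly or via reflection then matches every pair in $A^v$ to a pair in $A$ of at least as large distance.

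The engine of the proof is the classical approximation theorem for spherical symmetrization by iterated polarization (due to Wolontis and to Baernstein--Taylor, with clean modern treatments by Brock--Solynin and by van Schaftingen; a proof is also presented in \cite{SS01}): if $A^*$ denotes the spherical cap centered at the north pole with $\lambda(A^*)=\lambda(A)$, then there is a sequence of unit vectors $v_1,v_2,\dots$ such that the iterated polarizations $A_0=A$, $A_k=(A_{k-1})^{v_k}$ satisfy $\mathbf 1_{A_k}\to\mathbf 1_{A^*}$ in $L^1(\SS^{h-1})$. I would invoke this as a black box; reproving it is the genuinely substantial ingredient, and I expect it to be the main obstacle in a fully self-contained write-up. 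Granting it, the diameter bound $\operatorname{diam}(A)\le\ell$ survives to the limit: for each $k$ we have $\operatorname{diam}(A_k)\le\ell$ by the second fact above, hence $\iint_{|x-y|>\ell}\mathbf 1_{A_k}(x)\mathbf 1_{A_k}(y)\,d\lambda\,d\lambda=0$, and since the map $f\mapsto\iint_{|x-y|>\ell}f(x)f(y)\,d\lambda\,d\lambda$ is continuous on $L^1$-bounded sets, the same integral vanishes for $\mathbf 1_{A^*}$, so the cap $A^*$ has essential — hence, being a cap, genuine — diameter at most $\ell$. To finish, note that the measure of a spherical cap is a continuous strictly increasing function of its diameter on $[0,2]$ (a cap of angular radius $\theta\le\pi/2$ has diameter $2\sin\theta$ and measure strictly increasing in $\theta$; at diameter $2$ it is all of $\SS^{h-1}$ up to a null set). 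Comparing the cap $B$ of diameter exactly $\ell$ with the cap $A^*$ of diameter at most $\ell$ gives $\lambda(A)=\lambda(A^*)\le\lambda(B)$, as claimed; the case $\ell=2$ is trivial since then $B=\SS^{h-1}$.

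One can avoid the convergence machinery for the inequality alone at the price of an extra step. A Blaschke selection argument shows $\sup\{\lambda(A):\operatorname{diam}(A)\le\ell\}$ is attained by a compact set $A^\dagger$: passing to closures does not decrease measure or change diameter, Hausdorff limits of sets of diameter $\le\ell$ again have diameter $\le\ell$, and Lebesgue measure is upper semicontinuous under Hausdorff convergence of compact sets (if $K_j\to K$ then eventually $K_j$ lies in the $\varepsilon$-neighborhood of $K$, and $\lambda$ of those neighborhoods decreases to $\lambda(K)$). But then showing that every such maximizer is a cap — rather than merely that its polarizations are again maximizers — seems to require precisely the statement that a set fixed up to measure zero by all polarizations through the origin must be a cap, which is the same circle of ideas. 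For that reason I would present the polarization-to-symmetrization route above as the main line.
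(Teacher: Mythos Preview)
The paper does not prove this theorem; it is quoted as a classical result of Schmidt \cite{S48} (with a pointer to \cite{SS01}) and then used as a black box to derive Corollary~\ref{cor:cap-upper}. So there is no proof in the paper to compare against.

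That said, your polarization (two-point symmetrization) route is a correct and by-now standard way to establish such spherical isodiametric statements. The two facts you isolate are right: measure preservation is immediate orbit by orbit, and your identity $|a-b|^2-|a-\sigma_v b|^2=-4\langle a,v\rangle\langle b,v\rangle$ together with the short case analysis does give $\operatorname{diam}(A^v)\le\operatorname{diam}(A)$. Invoking the Baernstein--Taylor/Brock--Solynin approximation theorem as a black box is reasonable, and your passage to the limit via the bilinear integral $\iint_{|x-y|>\ell}f(x)f(y)\,d\lambda\,d\lambda$ is sound because the indicators are uniformly bounded in $L^\infty$, so the quadratic form is continuous along $L^1$-convergent sequences of indicators. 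Your honest remark that the compactness alternative ultimately runs into the same circle of ideas is accurate.

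One small wrinkle at the endpoint: the sentence ``the case $\ell=2$ is trivial since then $B=\SS^{h-1}$'' is not quite what the theorem says. For $\ell=2$ there are many spherical caps of diameter $2$ (any cap of angular radius at least $\pi/2$), and only the full sphere makes the inequality hold against arbitrary $A$. This is really an ambiguity in the stated theorem rather than a defect in your argument; the only case the paper uses is $\ell<2$, where your proof is complete.
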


We have the following corollary of this theorem and a standard concentration of measure inequality for spherical caps (see, e.g., Lemma 2.2 in \cite{B97}).

\begin{corollary}[Corollary 30 in \cite{SS01}] \label{cor:cap-upper}
  Let $\mu \in [0,1)$. If $A \subset \SS^{h-1}$ is any measurable set
  with diameter at most $2 - \mu$, then $\lambda(A) \leq 2e^{-\mu h/2}$.
\end{corollary}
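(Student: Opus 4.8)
The plan is to reduce, via Schmidt's isoperimetric theorem stated above, to the case in which $A$ is a spherical cap of diameter $2-\mu$, and then to read off the measure of that cap from the standard concentration-of-measure estimate for spherical caps (Lemma~2.2 of \cite{B97}) cited just above.

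Since $A$ has diameter at most $2-\mu \le 2$, applying Schmidt's isoperimetric theorem with $\ell = 2-\mu$ gives $\lambda(A) \le \lambda(B)$, where $B \subset \SS^{h-1}$ is a spherical cap of diameter exactly $2-\mu$; it therefore suffices to bound $\lambda(B)$. Write $B = \{x \in \SS^{h-1} : \langle x, e\rangle \ge t\}$ for a unit vector $e$ and a threshold $t \ge 0$. A short computation shows that for $x, y \in B$ one has $\langle x, y\rangle \ge 2t^2 - 1$, with equality exactly for antipodal points of the boundary circle $\{z : \langle z, e\rangle = t\}$; hence $|x-y|^2 \le 4(1-t^2)$, so the Euclidean diameter of $B$ equals $2\sqrt{1-t^2}$. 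Setting $2\sqrt{1-t^2} = 2-\mu$ yields $t^2 = \mu - \tfrac{\mu^2}{4}$. Feeding this threshold into the cited cap estimate, which in its standard form reads $\lambda(\{x : \langle x, e\rangle \ge t\}) \le C_0\, e^{-t^2 h/2}$ for an absolute constant $C_0$, we obtain
\[
  \lambda(B) \;\le\; C_0\, e^{-t^2 h/2} \;=\; C_0\, e^{\mu^2 h/8}\, e^{-\mu h/2},
\]
and the claim follows as soon as $C_0\, e^{\mu^2 h/8} \le 2$.

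The one point that needs care is exactly this last inequality: one must pin down the precise form of the cited spherical-cap estimate (whether the exponent carries $h$ or $h-1$, and whether $C_0$ should really be a $\Theta\!\big(1/(t\sqrt h)\big)$-type prefactor rather than a constant) and verify that, together with the geometric factor $e^{\mu^2 h/8}$ coming from the gap between $t^2 = \mu-\mu^2/4$ and $\mu$, everything collapses into the single leading constant $2$ in the relevant parameter range. This range is $\mu^2 h = O(1)$, which is precisely what occurs in every application of this corollary in the paper: there $\mu = \e/\sqrt h$, so $\mu^2 h = \e^2 \le 1$ and $e^{\mu^2 h/8} \le e^{1/8} < 1.14$. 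Everything else — the reduction to a cap and the elementary diameter computation for a cap — is routine, with Schmidt's theorem doing all of the genuine isoperimetric work; the concentration estimate merely supplies the Gaussian-type tail bound for the resulting cap.
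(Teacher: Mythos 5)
Your route is the same one the paper intends: the paper does not prove this corollary itself, but attributes it to Schmidt's isoperimetric theorem together with the spherical-cap estimate of Lemma~2.2 in \cite{B97}, which is exactly your reduction. The reduction to a cap and the computation that a cap $\{x:\langle x,e\rangle\ge t\}$ has diameter $2\sqrt{1-t^2}$, hence $t^2=\mu-\mu^2/4$, are correct.

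The one real issue is the point you flag yourself: as written, your bound $C_0\,e^{\mu^2h/8}e^{-\mu h/2}$ only gives the stated $2e^{-\mu h/2}$ when $\mu^2 h=O(1)$, whereas the corollary is asserted for all $\mu\in[0,1)$ and all $h$ (for $\mu$ close to $1$ and $h$ large, $e^{\mu^2h/8}$ swamps the constant $2$). This gap is an artifact of passing to the Gaussian form of the cap estimate, and it disappears if you keep the polynomial form that the proof of Ball's lemma actually delivers: the cap $\{x:\langle x,e\rangle\ge t\}$ is contained in a Euclidean ball of radius $\sqrt{1-t^2}$, so its normalized measure is at most $(1-t^2)^{h/2}$. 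Since $\sqrt{1-t^2}=1-\mu/2$ for a cap of diameter $2-\mu$, this gives
\[
\lambda(B)\;\le\;\left(1-\frac{\mu}{2}\right)^{h}\;\le\;e^{-\mu h/2}
\]
for every $\mu\in[0,1)$, with constant $1$ rather than $2$. Substituting this for the step $e^{-t^2h/2}=e^{\mu^2h/8}e^{-\mu h/2}$ removes the restriction on the parameter range entirely; the factor $2$ in the statement is then pure slack. With that one-line replacement your argument is complete.
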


\begin{proof}[Proof of Claim~\ref{cl:BE-ind}]
  We show that the largest independent set contained in each of $X$
  and $Y$ has size at most $ne^{-\e \sqrt{h}/4}$. Let $X_I \subset
  X$ be an independent set. Then the diameter of $X_I$ on $\SS^{h-1}$ is at most
  $2 - \mu$. Let $D_I = \bigcup_{i \in X_I} D_i$. Since the regions $D_i$ all have diameter at most
  $\mu/4$, the diameter of $D_I$ is at most $2 - \mu/2$. By
  Corollary~\ref{cor:cap-upper} we have $\lambda(D_I) \leq 2e^{-\mu h
    /4}$. Since $\lambda(D_I) = |X_I|/(n/2)$, we have $|X_I| \leq n
  e^{-\mu h/4}$.
\end{proof}

The next lemma gives a lower bound on the measure of spherical caps.

\begin{lemma} \label{lem:cap-upper}
  Let $h\geq 5$ be positive integer, and $\e > 0$. Let $B$ be the spherical
  cap in $\SS^{h-1}$ consisting of all points with distance at most $\sqrt
  2 - \frac{\e}{\sqrt h}$ from some fixed point. Then $\lambda(B) \geq
  \frac12 - \sqrt 2 \e$.
\end{lemma}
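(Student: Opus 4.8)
The plan is to translate the Euclidean cap into an inner-product halfspace on the sphere and then estimate a single one-dimensional integral. First, the easy case: if $\e \geq \frac{1}{2\sqrt 2}$, then $\tfrac12 - \sqrt 2\,\e \leq 0 \leq \lambda(B)$ and there is nothing to prove, so from now on I may assume $\e < \frac{1}{2\sqrt 2}$ and set $\mu := \e/\sqrt h$, so that $0 < \mu < 1$ and $\sqrt 2 - \mu > 0$. Letting $p$ denote the center of the cap $B$, for a unit vector $x$ we have $|x - p|^2 = 2 - 2\, x\cdot p$, so $|x-p| \leq \sqrt 2 - \mu$ is equivalent to $x \cdot p \geq \sqrt 2\,\mu - \tfrac12\mu^2 =: c$; note $0 < c < \sqrt 2\,\mu < 1$. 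Thus $B = \{x \in \SS^{h-1} : x\cdot p \geq c\}$.

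Next I would use the standard fact that the pushforward of the uniform probability measure on $\SS^{h-1}$ under $x \mapsto x\cdot p$ has density proportional to $(1-u^2)^{(h-3)/2}$ on $[-1,1]$. Hence
\[
  \lambda(B)
  =
  \frac{\int_c^1 (1-u^2)^{(h-3)/2}\,du}{\int_{-1}^1 (1-u^2)^{(h-3)/2}\,du},
  \qquad
  \lambda\big(\{x : x\cdot p \geq 0\}\big) = \tfrac12,
\]
the second equality being the measure of a hemisphere. Subtracting,
\[
  \tfrac12 - \lambda(B)
  =
  \frac{\int_0^c (1-u^2)^{(h-3)/2}\,du}{\int_{-1}^1 (1-u^2)^{(h-3)/2}\,du}.
\]
For the numerator I use the crude bound $(1-u^2)^{(h-3)/2}\leq 1$ on $[0,c]$, giving $\int_0^c(1-u^2)^{(h-3)/2}\,du \leq c \leq \sqrt 2\,\mu = \sqrt 2\,\e/\sqrt h$. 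For the denominator I restrict the integral to $[-1/\sqrt h, 1/\sqrt h]$, where $1-u^2 \geq 1-1/h$, obtaining $\int_{-1}^1 (1-u^2)^{(h-3)/2}\,du \geq \tfrac{2}{\sqrt h}(1-1/h)^{(h-3)/2}$, and then $(1-1/h)^{(h-3)/2} \geq e^{-1/2}$ since $\log(1-1/h) \geq -\tfrac{1}{h-1}$ yields $(h-3)\log(1-1/h) \geq -\tfrac{h-3}{h-1} \geq -1$ for $h \geq 3$. Combining the two estimates,
\[
  \tfrac12 - \lambda(B)
  \leq
  \frac{\sqrt 2\,\e/\sqrt h}{(2/\sqrt h)\,e^{-1/2}}
  =
  \frac{\sqrt{2e}}{2}\,\e
  <
  \sqrt 2\,\e,
\]
which is the claim.

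The only thing to watch is the bookkeeping of constants: after the two crude bounds one is left needing $\frac{\sqrt{2e}}{2} \approx 1.17$ to be below $\sqrt 2 \approx 1.41$, which holds comfortably. In fact there is slack to spare, and the argument works for all $h \geq 3$, so the hypothesis $h \geq 5$ is more than enough.
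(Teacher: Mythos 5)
Your proof is correct and follows essentially the same route as the paper's: both reduce the cap measure to a ratio of one--dimensional integrals of a power of $(1-u^2)$, with the same threshold $c=\sqrt 2\,\mu-\mu^2/2$, the numerator bounded crudely by $c\leq\sqrt 2\,\e/\sqrt h$, and the denominator bounded below by a constant times $1/\sqrt h$. The only difference is cosmetic: you invoke the marginal density $(1-u^2)^{(h-3)/2}$ of the uniform surface measure directly, whereas the paper first converts the cap's surface measure into a volume ratio via a cone argument and therefore integrates $(1-x^2)^{(h-1)/2}$; both reductions are valid and the subsequent estimates are interchangeable.
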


\begin{proof}
  Let $\delta = \e\sqrt{2/h} - \e^2/(2h)$ so that $(1-\delta)^2 + (1 -
  \delta^2) = (\sqrt 2 - \e/\sqrt h)^2$, and thus $B$ can be taken to be
  $\SS^{h-1} \cap \{x_1 \geq \delta\}$.  Let $A$ be the intersection of the
  ($h$-dimensional) unit ball with the cone determined
  by the origin and the boundary $\SS^{h-1} \cap \{x_1 = \delta\}$ of our
  spherical cap.  Note that $A$ is the convex hull of the origin and the spherical cap $B$. Since we have normalized the total surface area of the
  sphere to be 1, and the cone contains the same fraction of each concentric sphere around the origin, the surface area of this spherical cap $B$ is precisely
  the ratio between the ($h$-dimensional) volumes of $A$ and the entire
  unit ball.  We may lower bound this by replacing $A$ with the simpler
  intersection of the unit ball and the half-space $x_1 \geq \delta$.  It
  therefore suffices to show that the volume of the part of the unit ball
  within the slice $0 \leq x_1 \leq \delta$ is at most $\sqrt{2} \epsilon$
  times the volume of the entire unit ball.  This final ratio is exactly
  \[
  %\lambda(\SS^{h-1} \cap \{0\leq x_1 \leq \delta\})
  %=
  \frac{\displaystyle \int_{0}^{\delta} \left(\sqrt{1-x^2}\right)^{h-1} \ dx}
{\displaystyle \int_{-1}^{1} \left(\sqrt{1-x^2}\right)^{h-1} \ dx} \,,
  \]
  because the $(h-1)$-dimensional intersection between any hyperplane $x_1
  = c$ and the unit ball is always an $(h-1)$-dimensional ball, whose
  measure is an absolute constant multiplied by its radius to the
  $(h-1)$-st power, and the constants cancel between the numerator and
  denominator.

  The numerator is at most $\delta$. Using $1-t \geq e^{-2t}$ for $t
  \in [0,1/2]$, we see that the denominator is at least
  \[
  \int_{-1/2}^{1/2} e^{-(h-1)x^2} \ dx
  = \frac{1}{\sqrt{h-1}} \int_{-\sqrt{h-1}/2}^{\sqrt{h-1}/2} e^{-x^2} \ dx
  \geq \frac{1}{\sqrt{h}} \int_{-1}^{1} e^{-x^2} \ dx
  > \frac{1}{\sqrt{h}}.
  \]
  Thus
  \[
  \lambda(\SS^{h-1} \cap \{0\leq x_1 \leq \delta\})
  \leq \frac{\delta}{1/\sqrt{h}} \leq \sqrt{2} \e. \qedhere
  \]
\end{proof}

\begin{proof}[Proof of Claim~\ref{cl:BE-deg}]
  Take any $x \in X$. We show that $x$ is joined to at least $(1/4 - 2\e)
  n$ vertices in $Y$. The spherical cap containing all points within
  distance at most $\sqrt 2 - 2\mu$ from $x$ has measure at least $1/2 -
  4\e$ by applying Lemma~\ref{lem:cap-upper} with $2\epsilon$. Thus this
  cap must intersect at least $(1/2 - 4\e)n/2$ regions $D_i$, and we have
  $|y_i - x| < \sqrt{2} - \mu$ for each $D_i$ that the cap intersects, so
  that $x y_i$ is an edge of the graph $BE(n,h,\e)$.
\end{proof}

Having completed the proof of Theorem \ref{thm:BE}, we may now easily
obtain Theorem \ref{corbeconstruct}.

\begin{proof}[Proof of Theorem~\ref{corbeconstruct}]
  We apply Theorem~\ref{thm:BE} with $h = \log n / \log \log n$ and $\e$ tending to $0$ sufficiently slowly with $n$, so that $n \geq (C \sqrt{h} / \epsilon)^h$ is
  satisfied for sufficiently large $n$. Theorem \ref{corbeconstruct}
  then follows as an immediate corollary.
\end{proof}

We next formulate another useful corollary of Theorem \ref{thm:BE}. As the
applications in the next section will rely on it, it will be helpful to to
study a variant of the Ramsey-Tur\'an numbers ${\bf RT}(n,K_4,m)$. Recall
that a graph $G=(V,E)$ on $n$ vertices is nice if it is $K_4$-free and
there is a bipartition $V=X \cup Y$ into parts of order $n/2$ such that
each part is $K_3$-free.  Let $S(n,m)$ be the maximum number of edges of a
nice graph on $n$ vertices with independence number less than $m$. Note
that
\begin{displaymath}
  {\bf RT}(n,K_4,m) \geq S(n,m).
\end{displaymath}
This holds because the function $S$ is a more restrictive version of the function ${\bf RT}$. Also note that Theorem \ref{thm:BE} provides a lower bound on $S(n,m)$, as
the Bollob\'as-Erd\H{o}s graph is nice.

For our next corollary, we will pick $h$ to be the largest positive integer
so that $n \geq h^h$, and hence $h > \frac{\log n}{\log \log n}$.  Then,
by picking $\e = 4(\log \log n)h^{-1/2}$, the condition $n \geq
(C\sqrt{h}/\epsilon)^h$ in Theorem \ref{thm:BE} is satisfied.  In this
case, $2ne^{-\epsilon \sqrt{h}/4}=2n/(\log n)$ and the
minimum degree $(1/4-2\epsilon)n$ implies that the graph has at least
$(1/4-2\epsilon)n^2/2=(1/8-\epsilon)n^2$ edges.

\begin{corollary}\label{corbeconstruct2} For $n$ sufficiently large and $\delta = 4(\log \log n)^{3/2}/(\log n)^{1/2}$, we have
${\bf RT}(n,K_4,\delta n) \geq S(n,\delta n) \geq (1/8-\delta)n^2$.
\end{corollary}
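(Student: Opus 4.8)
The plan is to derive this statement as a direct consequence of Theorem~\ref{thm:BE}, since the Bollob\'as-Erd\H{o}s graph that it produces is nice and hence is admissible for the quantity $S(n,\cdot)$. Following the discussion immediately preceding the statement, I would fix parameters as follows: take $h$ to be the largest positive integer with $n \geq h^h$, which forces $h > \frac{\log n}{\log \log n}$ (in particular $h \geq 16$ for $n$ large, since $h \to \infty$), and set $\e = 4(\log\log n)\,h^{-1/2}$. Two hypothesis checks are then needed. The size condition $n \geq (C\sqrt h/\e)^h$ holds because $C\sqrt h/\e = \frac{Ch}{4\log\log n} < h$ once $\log\log n > C/4$, so $(C\sqrt h/\e)^h < h^h \leq n$ for $n$ sufficiently large. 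And $\e < 1$: since $\sqrt h > \sqrt{\log n/\log\log n}$, we get $\e < \frac{4(\log\log n)^{3/2}}{(\log n)^{1/2}} = \delta$, which lies below $1$ for $n$ large; this also records the useful inequality $\e \leq \delta$.

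With the hypotheses verified, Theorem~\ref{thm:BE} supplies a nice graph $G$ on $n$ vertices with independence number at most $2ne^{-\e\sqrt h/4} = 2ne^{-\log\log n} = 2n/\log n$ and minimum degree at least $(1/4-2\e)n$. The independence number is less than $\delta n$ because $\frac{1}{\log n} \ll \frac{(\log\log n)^{3/2}}{(\log n)^{1/2}}$, and the minimum-degree bound gives $e(G) \geq \frac n2(1/4 - 2\e)n = (1/8-\e)n^2 \geq (1/8-\delta)n^2$, using $\e \leq \delta$. Since $G$ is nice and has independence number less than $\delta n$, by definition $S(n,\delta n) \geq e(G) \geq (1/8-\delta)n^2$, and ${\bf RT}(n,K_4,\delta n) \geq S(n,\delta n)$ was already observed. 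This completes the argument.

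The proof is essentially bookkeeping, so there is no genuinely hard step; the only point that needs care is the calibration of $h$. It must be taken large enough that $\e = 4(\log\log n)h^{-1/2}$ is small — so that the independence number $2ne^{-\e\sqrt h/4}$ and the edge count $(1/8-\e)n^2$ come out as claimed — yet small enough that the size requirement $n \geq (C\sqrt h/\e)^h$ still holds. The choice $h = \max\{k : k^k \leq n\}$ is precisely what balances these two competing demands, and once this is in place every remaining inequality is a routine asymptotic comparison.
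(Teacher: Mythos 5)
Your proposal is correct and follows essentially the same route as the paper, which derives the corollary from Theorem~\ref{thm:BE} with exactly the same choices $h=\max\{k:k^k\le n\}$ and $\e=4(\log\log n)h^{-1/2}$; you have merely spelled out the verification of the hypotheses (including $\e\le\delta$ and the size condition) that the paper leaves implicit.
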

Corollary \ref{corbeconstruct2} will be used in combination with the
densifying construction in the next section to prove Theorems
\ref{theorybeyond} and \ref{easythe}, which give lower bounds on
Ramsey-Tur\'an numbers that are significantly larger than the number of
edges in the Bollob\'as-Erd\H{o}s graph.

\section{Above the Bollob\'as-Erd\H{o}s density} \label{sectabove}

If $G$ is a nice graph with edge density less than $1/2$, we will find another nice graph $G'$ on the same vertex set which is a hybrid of $G$ and a complete bipartite graph. The  graph $G'$ is denser than $G$, and its independence number is not much larger than that of $G$. Specifically, with $V_1$, $V_2$ being the triangle-free parts of $G$ of equal size, we will take some $U_1 \subset V_1$ and $U_2 \subset V_2$, and, for $i=1,2$, we add all edges between $U_i$ and $V_{3-i}$ and delete all edges in $V_i$ which contain at least one vertex in $U_i$. Starting with $G$ being a Bollob\'as-Erd\H{o}s graph, we will be able to get a denser nice  graph $G'$ without increasing the independence number too much.

\begin{lemma} \label{hybrid} For positive integers $d$, $m$, $n$ with $n \geq 6$ even and $d \leq n/2$, we have $$S(n,m+d) \geq \left(1-\frac{2d}{n}\right)^2S(n,m)+dn-d^2-n.$$
\end{lemma}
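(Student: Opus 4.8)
The plan is to realize the hybrid construction sketched just before the lemma, starting from an extremal nice graph which we first shrink slightly so that we have room to add new complete-bipartite edges.

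Let $G$ be a nice graph on $n$ vertices with independence number less than $m$ and exactly $S(n,m)$ edges, and let $V_1 \cup V_2$ be its bipartition into triangle-free parts of size $n/2$. First I would pick subsets $W_i \subset V_i$ with $|W_i| = n/2 - d$ for which $e(G[W_1 \cup W_2])$ is large. If each $W_i$ is chosen as a uniformly random $(n/2-d)$-subset of $V_i$, independently, then a crossing edge of $G$ lands inside $W_1 \cup W_2$ with probability exactly $(1 - 2d/n)^2$, whereas an edge lying inside $V_1$ or inside $V_2$ survives with probability $\tfrac{(n/2-d)(n/2-d-1)}{(n/2)(n/2-1)}$, which is at least $(1-2d/n)^2 - \tfrac{d}{(n/2)(n/2-1)}$. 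Since there are at most $2\binom{n/2}{2} = (n/2)(n/2-1)$ non-crossing edges, the expected number of surviving edges is at least $(1 - 2d/n)^2 e(G) - d$, so some choice of $W_1,W_2$ gives $e(G[W_1 \cup W_2]) \geq (1 - 2d/n)^2 S(n,m) - d$.

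Now put $U_i = V_i \setminus W_i$, so $|U_i| = d$, and define $G'$ on the same vertex set $V_1 \cup V_2$ by: keeping the edges of $G$ inside $W_1 \cup W_2$; deleting every other edge inside $V_1$ and inside $V_2$; and adding all edges between $U_1$ and $V_2$ and between $U_2$ and $V_1$. I would then verify the three defining properties. Each part $V_i$ retains only the edges of $G$ inside $W_i$, hence is triangle-free. For $K_4$-freeness, note that since both parts are triangle-free any copy of $K_4$ in $G'$ uses exactly two vertices of each part; but inside $V_i$ the set $U_i$ is independent and has no edge to $W_i$, so the two vertices chosen from $V_i$ must lie in $W_i$, forcing all four vertices into $W_1 \cup W_2$, where $G'$ coincides with $G$ --- contradicting $K_4$-freeness of $G$. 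For the independence number, an independent set $I$ of $G'$ meets $W_1 \cup W_2$ in an independent set of $G$, so $|I \cap (W_1 \cup W_2)| < m$; moreover $I$ cannot contain a vertex of $U_1$ and a vertex of $U_2$ simultaneously, since any such pair is an edge of $G'$, so $|I \cap (U_1 \cup U_2)| \leq d$; altogether $|I| < m + d$. Thus $G'$ is a nice graph on $n$ vertices with independence number less than $m+d$, so $S(n, m+d) \geq e(G')$.

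Finally I would count edges: $e(G') = e(G[W_1 \cup W_2]) + |U_1|\,|V_2| + |U_2|\,|W_1| = e(G[W_1 \cup W_2]) + \tfrac{dn}{2} + d\bigl(\tfrac{n}{2} - d\bigr) = e(G[W_1 \cup W_2]) + dn - d^2$, and combining with the first step gives $S(n,m+d) \geq (1-2d/n)^2 S(n,m) + dn - d^2 - d$, which implies the claimed bound since $d \leq n$. The only genuinely delicate point is the averaging step: vertex deletion does not preserve crossing edges and within-part edges with the same probability, and it is precisely to absorb this discrepancy that the lemma carries an additive loss term --- the computation above shows the discrepancy is at most $d$, comfortably within the stated $-n$.
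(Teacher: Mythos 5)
Your proof is correct and follows essentially the same route as the paper: delete a random $d$-subset from each side to find a dense induced nice subgraph, then add the complete bipartite edges from $U_i$ to $V_{3-i}$ and verify niceness, independence number at most $m+d-1+d$, and the edge count $dn-d^2$. The only (harmless) difference is in the averaging step, where you track crossing and non-crossing edges separately to get a loss of only $d$, while the paper bounds every edge's survival probability by $\binom{n/2-d}{2}/\binom{n/2}{2}$ and absorbs the discrepancy using $S(n,m)\leq n^2/3$ from Tur\'an's theorem, losing $n$.
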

\begin{proof}
Let $G=(V,E)$ be a nice graph on $n$ vertices and $S(n,m)$ edges with independence number less than $m$. So there is a bipartition $V=V_1 \cup V_2$ into parts of order $n/2$ with each part $K_3$-free.

Let $U_i \subset V_i$ for $i=1,2$ be such that $|U_i|=d$ and the induced subgraph of $G$ with vertex set $V \setminus (U_1 \cup U_2)$ has the maximum number of edges. Denote this induced subgraph by $G_0$.
By deleting randomly chosen vertex subsets of $V_1$ and $V_2$ each of order $d$, each edge of $G$ survives in this resulting induced subgraph with probability at least ${n/2-d \choose 2}/{n/2 \choose 2}$. Hence, the number of edges of $G_0$ satisfies \begin{eqnarray*} e(G_0) & \geq & S(n,m){n/2-d \choose 2}/{n/2 \choose 2}=S(n,m)\left(1-\frac{2d}{n}\right)\left(1-\frac{2d}{n-2}\right) \\ & \geq & \left(1-\frac{2d}{n}\right)^2  S(n,m)-n,\end{eqnarray*}
where in the last inequality we used $S(n,m) \leq n^2/3$, which follows from Tur\'an's theorem for $K_4$-free graphs, $d \leq n/2$, and $n \geq 6$.

Modify $G$ to obtain $G'$ by first deleting all edges in $V_i$ which contain at least one vertex in $U_i$, and then adding all edges from $U_i$ to $V_{3-i}$. The number of edges of $G'$ satisfies $$e(G')=e(G_0)+|U_1||V_2|+|U_2||V_1 \setminus U_1|=e(G_0)+dn-d^2 \geq \left(1-\frac{2d}{n}\right)^2  S(n,m)+dn-d^2-n.$$

We next show that $G'$ is nice. Since, for $i=1,2$, the induced subgraph of $G'$ with vertex set $V_i$ is a subgraph of the induced subgraph of $G$ on the same vertex set, then the induced subgraph of $G'$ with vertex set $V_i$ is triangle-free. Assume for the sake of contradiction that $G'$ contained a $K_4$. As $G$ is $K_4$-free, the $K_4$ must contain at least one vertex in $U_1 \cup U_2$. If the $K_4$ contained a vertex $u$ in $U_i$, as all the neighbors of $u$ are in $V_{3-i}$, then the other three vertices in the $K_4$ must be contained in $V_{3-i}$, contradicting that it is triangle-free. Hence $G'$ is $K_4$-free, and hence nice.

As $U_1$ is complete to $U_2$ in $G'$, any independent set in $G'$ cannot contain a vertex in both $U_1$ and $U_2$. As also $|U_1|=|U_2|=d$ and $G$ has independence number less than $m$, the independence number of $G'$ is less than $m+d$.
\end{proof}

We have the following simple corollaries.

\begin{corollary}\label{easyco}
For even $n \geq 6$, if $S(n,m) \geq \left(\frac18-\delta\right)n^2$ with
$n^{-1/2} \leq \delta \leq \frac{1}{4}$, then $S(n,m+2\delta n) \geq
\frac{n^2}{8}$.
\end{corollary}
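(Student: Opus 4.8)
The plan is to apply Lemma~\ref{hybrid} directly with $d = 2\delta n$ and simplify. First I would check the hypotheses of that lemma: we are given that $n \geq 6$ is even, and since $\delta \leq \tfrac14$ we have $d = 2\delta n \leq \tfrac n2$, while $\delta \geq n^{-1/2}$ forces $d \geq 2\sqrt n \geq 1$, so $d$ is a legitimate positive integer (suppressing rounding, as elsewhere in the paper). Lemma~\ref{hybrid} then yields
\[
S(n, m + 2\delta n) \;\geq\; \Bigl(1 - \tfrac{2d}{n}\Bigr)^2 S(n,m) + dn - d^2 - n
\;=\;
(1 - 4\delta)^2 S(n,m) + 2\delta n^2 - 4\delta^2 n^2 - n.
\]

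Next I would substitute the hypothesis $S(n,m) \geq \bigl(\tfrac18 - \delta\bigr)n^2$ and expand. Using $(1-4\delta)^2 = 1 - 8\delta + 16\delta^2$, one computes
\[
(1-4\delta)^2\Bigl(\tfrac18 - \delta\Bigr) = \tfrac18 - 2\delta + 10\delta^2 - 16\delta^3,
\]
so that the first-order terms in $\delta$ cancel against the $+2\delta n^2$ from the lemma, leaving
\[
S(n, m + 2\delta n) \;\geq\; \frac{n^2}{8} + \bigl(6\delta^2 - 16\delta^3\bigr) n^2 - n
\;=\; \frac{n^2}{8} + 2\delta^2\bigl(3 - 8\delta\bigr) n^2 - n.
\]

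Finally I would absorb the lower-order term. Since $\delta \leq \tfrac14$ we have $3 - 8\delta \geq 1$, so the middle term is at least $2\delta^2 n^2$; and since $\delta \geq n^{-1/2}$ we have $\delta^2 n^2 \geq n$, whence $2\delta^2(3-8\delta)n^2 \geq 2n > n$. Therefore $S(n, m+2\delta n) \geq \tfrac{n^2}{8} + n > \tfrac{n^2}{8}$, which is even slightly stronger than claimed. There is no real obstacle here: the only point requiring care is checking that the error term $-n$ produced by Lemma~\ref{hybrid} is dominated, and this is exactly what the hypothesis $\delta \geq n^{-1/2}$ buys us; one should also verify that the first-order-in-$\delta$ contributions genuinely cancel in the expansion above, which is what singles out $d = 2\delta n$ as the natural choice.
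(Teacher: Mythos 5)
Your proof is correct and follows essentially the same route as the paper: apply Lemma~\ref{hybrid} with $d=2\delta n$, expand, observe the linear-in-$\delta$ terms cancel to leave $\frac{n^2}{8}+(6\delta^2-16\delta^3)n^2-n$, and use $n^{-1/2}\leq\delta\leq\frac14$ to dominate the $-n$ error. The arithmetic matches the paper's (which writes the same quantity as $\frac{n^2}{8}(1+48\delta^2-\frac{8}{n}-128\delta^3)$), so there is nothing to add.
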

\begin{proof}
Let $d=2\delta n$. By Lemma \ref{hybrid}, we have
\begin{eqnarray*} S(n,m+d) & \geq & \left(1-\frac{2d}{n}\right)^2S(n,m)+dn-d^2-n \\ & = & \left(1-4\delta\right)^2\left(\frac18 - \delta \right)n^2+2\delta n^2-4\delta^2n^2 -n
\\ & = & \frac{n^2}{8}\left(1+48\delta^2-\frac{8}{n}-128\delta^3\right)
\\ & \geq & \frac{n^2}{8},\end{eqnarray*}
where the last inequality uses $n^{-1/2} \leq \delta \leq \frac{1}{4}$.
\end{proof}

\begin{proof}[Proof of Theorem~\ref{easythe}]
This is an immediate consequence of Corollary \ref{corbeconstruct2}, Corollary \ref{easyco}, and ${\bf RT}(n,K_4,m) \geq S(n,m)$. Indeed, Corollary \ref{corbeconstruct2} states that for $n$ sufficiently large and $\delta = 4(\log \log n)^{3/2}/(\log n)^{1/2}$, we have $S(n,\delta n) \geq (1/8-\delta)n^2$. With this choice of $\delta$ and $m=\delta n$, Corollary \ref{easyco} then implies that, if $n$ is even, we have ${\bf RT}(n,K_4,3\delta n) \geq S(n,3\delta n) \geq \frac{n^2}{8}$. The proof can be easily modified to handle the case $n$ is odd.
\end{proof}

The next corollary allows us to get a lower bound on Ramsey-Tur\'an numbers greater than $n^2/8$.

\begin{corollary}\label{nextabove}
For even $n \geq 6$, if $S(n,m) \geq \left(\frac18-\delta\right)n^2$ and
$\frac{1}{\delta n} \leq a \leq \frac{1}{2}$, then $S(n,m+an) \geq
\frac{n^2}{8}\left(1+4a-4a^2-8\delta\right)$.
\end{corollary}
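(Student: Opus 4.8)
The plan is to apply Lemma~\ref{hybrid} directly with $d = an$ and then simplify. Since $a \leq \frac12$ we have $d = an \leq \frac{n}{2}$, and since $a \geq \frac{1}{\delta n}$ and $\delta \leq \frac14$ we have $an \geq \frac{1}{\delta} \geq 4$, so $d$ is a legitimate positive integer in the range required by Lemma~\ref{hybrid} (as usual we suppress the rounding of $an$). Plugging the hypothesis $S(n,m) \geq \left(\frac18 - \delta\right)n^2$ into Lemma~\ref{hybrid} gives
$$
S(n, m+an) \;\geq\; (1-2a)^2\left(\tfrac18 - \delta\right)n^2 + an^2 - a^2 n^2 - n.
$$

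Next I would expand $(1-2a)^2 = 1 - 4a + 4a^2$ and separate the $\delta$-free part from the rest. The $\frac18 n^2$ contribution is $\frac{n^2}{8}(1 - 4a + 4a^2)$, and adding $an^2 - a^2 n^2 = \frac{n^2}{8}(8a - 8a^2)$ produces exactly $\frac{n^2}{8}(1 + 4a - 4a^2)$, which is the main term of the claimed bound. Hence it suffices to absorb the error, i.e.\ to show
$$
\frac{n^2}{8}(1 + 4a - 4a^2) - \delta n^2 (1-2a)^2 - n \;\geq\; \frac{n^2}{8}\left(1 + 4a - 4a^2 - 8\delta\right),
$$
and since $\frac{n^2}{8}\cdot 8\delta = \delta n^2$, after cancellation this is equivalent to $\delta n^2\big(1 - (1-2a)^2\big) \geq n$, that is $4a(1-a)\,\delta n \geq 1$.

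This last inequality is exactly where the hypothesis $a \geq \frac{1}{\delta n}$ enters: it gives $a\delta n \geq 1$, so $4a(1-a)\delta n = 4(1-a)\cdot(a\delta n) \geq 4(1-a) \geq 2 \geq 1$ using $a \leq \frac12$. There is no genuine obstacle in this argument; the only points needing a little care are verifying that $d = an$ falls in the admissible range $[1, n/2]$ for Lemma~\ref{hybrid}, and tracking the sign of the $\delta$-error term through the estimate $(1-2a)^2 \leq 1$.
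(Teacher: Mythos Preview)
Your proof is correct and follows essentially the same route as the paper: both apply Lemma~\ref{hybrid} with $d=an$, expand, and reduce the claim to the inequality $4a(1-a)\delta n \geq 1$, which follows from $a\delta n \geq 1$ and $a\leq \tfrac12$. One minor quibble: your side remark ``$\delta \leq \tfrac14$'' is not part of the hypotheses and is not actually needed anywhere in your computation (the paper, for its part, does not check $d\geq 1$ at all), so you should drop or rephrase that clause.
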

\begin{proof}
Let $d=a n$. By Lemma \ref{hybrid}, we have
\begin{eqnarray*} S(n,m+d) & \geq & \left(1-\frac{2d}{n}\right)^2 S(n,m)+dn-d^2-n \\ & = & \left(1-2a\right)^2\left(\frac18 - \delta \right)n^2+an^2-a^2n^2 -n
\\ & = & \frac{n^2}{8}\left(1+4a-4a^2-8\delta+32\delta a-32\delta a^2-\frac{8}{n}\right)
\\ & \geq & \frac{n^2}{8}\left(1+4a-4a^2-8\delta\right),\end{eqnarray*}
where the last inequality uses  $\frac{1}{\delta n} \leq a \leq \frac{1}{2}$.
\end{proof}

When $a \gg \delta$, Corollary \ref{nextabove} produces a construction with
substantially more than $\frac{n^2}{8}$ edges.

\begin{proof}[Proof of Theorem~\ref{theorybeyond}]
This is an immediate consequence of Corollary~\ref{corbeconstruct2}, Corollary~\ref{nextabove}, and ${\bf RT}(n,K_4,m) \geq S(n,m)$.  Indeed, Corollary \ref{corbeconstruct2} states that for $n$ sufficiently large and $\delta = 4(\log \log n)^{3/2}/(\log n)^{1/2}$, we have $S(n,\delta n) \geq (1/8-\delta)n^2$. With this choice of $\delta$ and letting $a=\frac{m}{n}-\delta=(1-o(1))\frac{m}{n}$ so that $\delta n +an=m$, Corollary \ref{nextabove} then implies that, if $n$ is even, we have \begin{eqnarray*}{\bf RT}(n,K_4,m) & \geq & S(n,m) \geq \frac{n^2}{8}\left(1+4a-4a^2-8\delta\right) \geq  \frac{n^2}{8}\left(1+\left(\frac{8}{3}-o(1)\right)a\right) \\ & = & \frac{n^2}{8}+\left(\frac{1}{3}-o(1)\right)mn,\end{eqnarray*} where we used $\delta = o(a)$ and
$m \leq \frac{n}{3}$ so that $a \leq \frac{1}{3}$. Note that when $m=o(n)$ we have $a^2=o(a)$, and the bound above improves to ${\bf RT}(n,K_4,m) \geq  \frac{n^2}{8}+\left(\frac{1}{2}-o(1)\right)mn$. The proof can be easily modified to handle the case $n$ is odd.
\end{proof}

\section{Concluding remarks}
\label{sec:conclusion}

In this paper, we solve the Bollob\'as-Erd\H{o}s problem of providing estimates on the independence number of $K_4$-free graphs in the critical window; see Theorem \ref{criticalwindow}. There is still some room to improve the bounds further. For example, in the third part of Theorem \ref{criticalwindow} we showed that for $m$ just $o(n)$, we have ${\bf RT}(n,K_4,m)-n^2/8=\Theta (mn)$, where the implied constants in the lower and upper bound are within a factor $3+o(1)$. It would be interesting to close the gap.

The asymptotic behavior for the Ramsey-Tur\'an numbers for odd cliques were determined by
Erd\H{o}s and S\'os \cite{ErSo} in 1969. They gave a simple proof that if $q$ is odd, then
$${\bf RT}(n,K_q,o(n))=\frac{1}{2}\left(1-\frac{2}{q-1}\right)n^2+o(n^2).$$
Even after the Bollob\'as-Erd\H{o}s-Szemer\'edi result, it still was years before it was generalized by Erd\H{o}s, Hajnal, S\'os, and Szemer\'edi \cite{EHSS83} to all even cliques. They proved, if $q$ is even, then
$${\bf RT}(n,K_q,o(n))=\frac{1}{2}\left(1-\frac{6}{3q-4}\right) n^2+o(n^2).$$
It would be nice to extend the results of this paper concerning the critical window for every even $q$.

Finally, it is quite remarkable that the old construction of Bollob\'as and
Erd\H{o}s can be tweaked to produce lower bounds which nearly reach our new
upper bounds.  Perhaps a further variation using high dimensional geometry
(e.g., changing the underlying space or metric) could further close the
gap.

\vspace{0.1cm}
\noindent {\bf Acknowledgments.} \, We would like to thank David Conlon for helpful comments, and Mathias Schacht for showing us Szemer\'edi's original proof of the Ramsey-Tur\'an result.

\end{document}